\newtheorem{dfn}{Definition}[section]
\newtheorem{thm}[dfn]{Theorem}
\newtheorem{prop}[dfn]{Proposition}
\newtheorem{lem}[dfn]{Lemma}
\newtheorem{rem}[dfn]{Remark}
\newtheorem{ass}[dfn]{Assumption}
\numberwithin{equation}{section}
\newcommand{\KMi}[1]{{\color{black} #1}}
\begin{document}

\title{A simple description of blow-up solutions through dynamics at infinity in nonautonomous ODEs 
}

\author[1,2]{Kaname Matsue\thanks{(Corresponding author, {\tt kmatsue@imi.kyushu-u.ac.jp})}}

\affil[1]{
\normalsize{
Institute of Mathematics for Industry, Kyushu University, Fukuoka 819-0395, Japan
}
}
\affil[2]{
\normalsize{
International Institute for Carbon-Neutral Energy Research (WPI-I$^2$CNER), Kyushu University, Fukuoka 819-0395, Japan
}
}

\maketitle

\begin{abstract}
A simple criterion of the existence of (type-I) blow-up solutions for nonautonomous ODEs is provided.
In a previous study \cite{Mat2025_NHIM}, geometric criteria for characterizing blow-up solutions for nonautonomous ODEs are provided by means of dynamics at infinity.
The basic idea towards the present aim is to correspond such criteria to leading-term equations associated with blow-up ansatz characterizing multiple-order asymptotic expansions, which originated from the corresponding study developed in the framework of autonomous ODEs. 
Restricting our attention to constant coefficients of leading terms of blow-ups, results involving the simple criterion of blow-up characterizations in autonomous ODEs can be mimicked to nonautonomous ODEs.
\end{abstract}

{\bf Keywords:} blow-up solutions, nonautonomous systems, asymptotic expansion, dynamics at infinity
\par
\bigskip
{\bf AMS subject classifications : } 34A26, 34C08, 34D05, 34E10, 34C41, 37C25, 58K55

\tableofcontents

\section{Introduction}
\label{section-intro}

Our main interest in the present study is a characterization of {\em blow-up solutions} of ODEs depending on the real-time variable $t$:
\begin{equation}
\label{ODE-original}
{\bf y}' \equiv \frac{d{\bf y}}{dt} = f(t,{\bf y}),\quad {\bf y}(t_0) = {\bf y}_0\in \tilde U,
\end{equation}
where $\tilde U\subset \mathbb{R}^n$ is an open set, $f: U\equiv \mathbb{R}\times \tilde U\to \mathbb{R}^n$ is $C^r$ with $r\geq 1$ and $t_0\in \mathbb{R}$ is given.
In particular, the {\em nonautonomous nature} is explicitly concerned here.
\par
A solution ${\bf y}(t)$ is said to \KMi{\em blow up} at $t_{\max} < \infty$ if its modulus (or norm) diverges as $t\to t_{\max}-0$.
The finite value $t_{\max}$ is known as the {\em blow-up time}.
Properties of blow-up solutions such as the existence and asymptotic behavior are of great importance in the field of (ordinary, partial, delay, etc.) differential equations, and are widely investigated in decades in many problems from many aspects, such as {\em ignition in combustion problem \cite{D1985}, chemotaxis (e.g., \cite{M2016, W2013_KS}), superconductivity (e.g., \cite{DNZ2022, MZ2008, PS2001}), self-focusing of optical laser \cite{SS1999}, wild oscillations in nonlinear beam problems (e.g., \cite{GP2011, GP2013}), finite-time collapse of crystalline curvature flows (e.g., \cite{A2002, IY2003})}.
Other aspects can be found in e.g., \cite{BFGK2011, BK1988, CHO2007, FM2002, GV2002}. 
Asymptotic behavior of blow-up solutions is often referred to as determination of {\em blow-up rates}, which is characterized as the following form:
\begin{equation*}
{\bf y}(t) \sim {\bf u}(\theta(t))\quad \text{ as }\quad t\to t_{\max},
\end{equation*}
for a function ${\bf u}$, where 
\begin{equation*}
\theta(t) = t_{\max}-t.
\end{equation*}
Note that $t_{\max}$ is assumed to be a finite value and to be known a priori for asymptotic analysis of ${\bf y}(t)$.
\par
The author and his collaborators have recently developed a framework to characterize blow-up solutions from the viewpoint of {\em dynamics at infinity} (e.g. \cite{Mat2018, Mat2019}), as well as machineries of computer-assisted proofs for the existence of blow-up solutions extracting  their both qualitative and quantitative features (e.g. \cite{LMT2023, MT2020_1, MT2020_2, TMSTMO2017}).
As in the present paper, finite-dimensional vector fields with scale invariance in an asymptotic sense, {\em asymptotic quasi-homogeneity} defined precisely in Definition \ref{dfn-AQH}, are mainly concerned.
In this framework, dynamics at infinity is appropriately described, and blow-up solutions are shown to be characterized by dynamical properties of invariant sets, such as equilibria, \lq\lq at infinity".
More precisely, the original vector field is transformed by appropriate change of coordinates with respect to the asymptotic quasi-homogeneity, and resulting vector field is called the {\em desingularized vector field}, and \lq\lq dynamics at infinity" is appropriately described by dynamics on the {\em horizon}, the image of infinity under appropriate transformations referred to as {\em compactifications} in many studies (e.g., \cite{Mat2018}) or {\em embeddings} (e.g., \cite{Mat2025_NHIM}), for the desingularized vector fields.
In particular, {\em hyperbolicity} of such invariant sets induces blow-up rates of the form $a_0(t_{\max} - t)^{-\rho}$ uniquely determined by the asymptotic quasi-homogeneity of the original vector field.
By means of the terminology in the field of (partial) differential equations, such blow-ups are said to be {\em type-I}.
In the latest study by the author, the similar description of blow-up solutions for {\em nonautonomous} systems of ODEs such as (\ref{ODE-original}) is provided as an application of blow-up description by means of shadowing to \lq\lq trajectories at infinity" with \lq\lq hyperbolic" properties, such as those on normally hyperbolic invariant manifolds (NHIMs), as well as over/inflowing invariant manifolds; normally hyperbolic structure over manifolds with possibly non-trivial boundary (\cite{W2013_NHIM}), or invariant manifolds admitting asymptotic phase \cite{Mat2025_NHIM}.
An appropriate form for studying (\ref{ODE-original}) in this framework is the {\em extended autonomous system}:
\begin{equation*}
\frac{d{\bf y}}{d\eta} = f(t,{\bf y}),\quad \frac{dt}{d\eta} = 1,\quad (t(0), {\bf y}(\eta = 0)) = (t_0, {\bf y}_0).
\end{equation*}
An equivalent expression is provided as follows:
\begin{equation}
\label{nonaut-extend}
\frac{d}{d\eta}\begin{pmatrix}
t \\ {\bf y}
\end{pmatrix} = \begin{pmatrix}
1 \\ f(t, {\bf y})
\end{pmatrix}\equiv f^{\rm ext}(t,{\bf y}).
\end{equation}
While theoretical and numerical studies of blow-ups for nonautonomous systems are realized through the methodology mentioned above, quite hard (sometimes tedious) calculations are required for meaningful observations indeed.
It is therefore worth investigating simpler blow-up criteria for general nonautonomous systems compared with investigations of blow-ups through the above methodology, in particular {\em global} embeddings discussed in Section \ref{sec:global}.
For example, when we apply it to the first Painlev'{e} equation
\begin{equation*}
u'' = 6u^2 + t,
\end{equation*}
a $3$-dimensional polynomial vector field of {\em order $25$ (!!)} has to be investigated for understanding the full dynamics including blow-ups (\cite{Mat2025_NHIM}).
Even for calculations of \lq\lq equilibria at infinity" (whose precise meaning is mentioned later) describing blow-ups, systems of polynomials of order $7$ have to be solved, which must be costly for the original equation above\footnote{
Tedious calculations mentioned here would be avoided when we apply \lq\lq localized" embeddings referred to as {\em directional} ones \cite{Mat2025_NHIM}, as far as an interest is restricted to blow-up behavior in practical directions, like blow-ups with positive values provided that the positivity of divergence is unraveled in advance.
}.
Note that typical preceding studies before development of the above methodology rely on special structures of systems so that they can be transformed into autonomous systems (e.g., \cite{FV2003, H2016, S2004, WWL2012}).
\par
On the other hand, {\em multi-order asymptotic expansions as well as their correspondence to several objects in dynamics at infinity} are recently provided by the author and his collaborators \cite{asym1, asym2}.
While the corresponding subject is originally motivated to derive a systematic methodology to calculate multi-order asymptotic expansions of blow-up solutions, quantities characterizing asymptotic expansions of blow-ups has been turned out to have one-to-one correspondence of (linear) dynamical structure of \lq\lq equilibria at infinity".
More precisely,
\begin{itemize}
\item Roots of {\em the balance law} (in asymptotic expansions) characterizing the coefficients of the leading term of blow-up solutions and equilibria on the horizon (for desingularized vector fields).
\item Eigenstructure between matrices associated with asymptotic expansions and the Jacobian matrices at the above equilibria on the horizon (for desingularized vector fields).
\end{itemize}
This correspondence provides not only a simple criterion to verify the existence of (type-I) blow-up solutions from the viewpoint of asymptotic expansions, but also fundamental characterization to their analytic and dynamical (or geometric) nature. 
Because this characterization discussed in \cite{asym2} was for blow-ups in {\em autonomous} systems, it would be natural to question the similar nature for blow-ups in {\em nonautonomous} systems as a generalization towards a universal nature of blow-up phenomena, which is our main aim in the present paper.
That is, we shall derive a correspondence of blow-up solutions among two different viewpoints, dynamics at infinity and asymptotic expansions; {\em Theorem \ref{thm-blow-up-estr}}, by means of quantities mentioned above.
As a consequence, a simple criterion of the existence of blow-up solutions in nonautonomous systems is derived; {\em Theorem \ref{thm-existence-blow-up}}.
\par
\bigskip
The rest of the present paper is organized as follows.
In Section \ref{section-preliminary}, we briefly review treatments of nonautonomous systems of ODEs for investigating blow-up solutions, as well as a criterion of their existence.
There the associated {\em desingularized vector field}  is introduced so that \lq\lq dynamics at infinity" can be considered. 
Blow-up solutions for nonautonomous systems of ODEs are then characterized, in the simplest case, by $1$-parameter families of hyperbolic equilibria forming NHIMs on the horizon.
Our main arguments are provided in Section \ref{section-correspondence}, where a problem to study asymptotic expansions of (type-I) blow-ups is formulated first, based on the similar arguments to \cite{asym1}.
Moreover, the correspondence of quantities characterizing blow-up solutions from the viewpoint reviewed in Section \ref{section-preliminary} and their asymptotic expansions is provided there.
The main idea is based on arguments in \cite{asym2} with several modifications due to the presence of variable $t$ in (\ref{nonaut-extend}).
Such technical difficulties rely on linear algebra, which will be overcome carefully applying the geometric information of \lq\lq infinity" through the machinery in Section \ref{section-preliminary}.
Finally, several examples are shown in Section \ref{section-examples}, where we shall see that the correspondence of quantities characterizing the nature of blow-ups a simple and reasonable way to investigate their existence.
Some examples also show the complexity to verify the criterion of blow-ups based on methodologies in Section \ref{section-preliminary} themselves and the significance of the present correspondence towards further applications.
Supplemental arguments in linear algebra are collected in Appendix \ref{section-algebra}, and numerical investigations of the correspondence in an example in Section \ref{section-examples} are provided in Appendix \ref{section-numerics}.

\section{Blow-up description for nonautonomous systems through dynamics at infinity: Short review}
\label{section-preliminary}

Here we briefly review a characterization of blow-up solutions for autonomous, finite-dimensional ODEs from the viewpoint of dynamical systems.
Details of the present methodology are already provided in \cite{Mat2025_NHIM}.
%
%
\subsection{Asymptotically quasi-homogeneous vector fields}
\label{sec:QH}

\begin{dfn}[Homogeneity index and admissible domain. cf. \cite{Mat2018, Mat2025_NHIM}]\rm
\label{dfn-index}
Let $\alpha = (\alpha_1,\cdots, \alpha_n)$ be a set of nonnegative integers.
We say the index set $I_\alpha=\{i\in \{1,\cdots, n\}\mid \alpha_i > 0\}$ the set of {\em homogeneity indices associated with $\alpha = (\alpha_1,\cdots, \alpha_n)$}.
Let $U\subset \mathbb{R}^n$.
We say the domain $U\subset \mathbb{R}^n$ {\em admissible with respect to the sequence $\alpha$}
if
\begin{equation*}
U = \left\{x=(x_1,\cdots, x_n)\in \mathbb{R}^n \mid x_i\in \mathbb{R}\text{ if }i\in I_\alpha,\ (x_{j_1},\cdots, x_{j_{n-l}}) \in \tilde U\right\},
\end{equation*} 
where $\{j_1, \cdots, j_{n-l}\} = \{1,\cdots, n\}\setminus I_\alpha$ and $\tilde U$ is an open set in $\mathbb{R}^{n-l}$ spanning $(x_{j_1},\cdots, x_{j_{n-l}})$ with $\{j_1, \cdots, j_{n-l}\} = \{1,\cdots, n\}\setminus I_\alpha$.
\end{dfn}

\begin{dfn}[Asymptotically quasi-homogeneous vector fields, cf. \cite{D1993, Mat2018}]\rm
\label{dfn-AQH}
Let $U\subset \mathbb{R}^n$ be an admissible set with respect to $\alpha$. 
Also, let $f_0:U \to \mathbb{R}$ be a function.
Let $\alpha_1,\ldots, \alpha_n$ be nonnegative integers with $(\alpha_1,\ldots, \alpha_n) \not = (0,\ldots, 0)$ and $k > 0$.
We say that $f_0$ is a {\em quasi-homogeneous function\footnote{
In preceding studies, all $\alpha_i$'s and $k$ are typically assumed to be natural numbers.
In the present study, on the other hand, the above generalization is valid.
} of type $\alpha = (\alpha_1,\ldots, \alpha_n)$ and order $k$} if
\begin{equation*}
f_0( s^{\Lambda_\alpha}{\bf x} ) = s^k f_0( {\bf x} )\quad \text{ for all } {\bf x} = (x_1,\ldots, x_n)^T \in U \text{ and } s>0,
\end{equation*}
where\footnote{
Throughout the rest of this paper, the power of real positive numbers or functions to matrices is described in the similar manner.
}
\begin{equation*}
\Lambda_\alpha =  {\rm diag}\left(\alpha_1,\ldots, \alpha_n\right),\quad s^{\Lambda_\alpha}{\bf x} = (s^{\alpha_1}x_1,\ldots, s^{\alpha_n}x_n)^T.
\end{equation*}
Next, let $X = \sum_{i=1}^n f_i({\bf x})\frac{\partial }{\partial x_i}$ be a continuous vector field defined on $U$.
We say that $X$, or simply $f = (f_1,\ldots, f_n)^T$ is a {\em quasi-homogeneous vector field of type $\alpha = (\alpha_1,\ldots, \alpha_n)$ and order $k+1$} if each component $f_i$ is a quasi-homogeneous function of type $\alpha$ and order $k + \alpha_i$.
\par
Finally, we say that $X = \sum_{i=1}^n f_i({\bf x})\frac{\partial }{\partial x_i}$, or simply $f: U\to \mathbb{R}^n$ is an {\em asymptotically quasi-homogeneous vector field of type $\alpha = (\alpha_1,\ldots, \alpha_n)$ and order $k+1$ (at infinity)} if there is a quasi-homogeneous vector field  $f_{\alpha,k} = (f_{i; \alpha,k})_{i=1}^n$ of type $\alpha$ and order $k+1$ such that
\begin{equation}
\label{residual}
f_i( s^{\Lambda_\alpha}{\bf x} ) - s^{k+\alpha_i} f_{i;\alpha,k}( {\bf x} ) = o(s^{k+\alpha_i}),\quad i\in \{1,\ldots, n\}
 \end{equation}
as $s\to +\infty$ uniformly on $\left\{{\bf x}\in U \mid \sum_{i\in I_\alpha} x_i^2 = 1, (x_{j_1},\cdots, x_{j_{n-l}}) \in \tilde K\right\}$ for any compact subset $\tilde K\subset \tilde U$.
\end{dfn}

\begin{rem}
In the above definition, non-polynomial-like functions such as $\sin x$ are not included to characterize quasi-homogeneity.
Indeed, such functions are allowed to exist only in the residual terms characterized by the asymptotic quasi-homogeneity (\ref{residual}). 
On the other hand, (\ref{residual}) is required for all $i\in \{1,\ldots, n\}$.
\end{rem}

A fundamental property of quasi-homogeneous functions and vector fields is reviewed in e.g. \cite{asym1}.
Throughout the rest of this section, consider an (autonomous) $C^r$ vector field (\ref{ODE-original}) with $r\geq 1$, where $f: U \to \mathbb{R}^n$ is asymptotically quasi-homogeneous of type $\alpha = (\alpha_1,\ldots, \alpha_n)$ and order $k+1$ at infinity defined on an admissible set $U\subset \mathbb{R}^n$ with respect to $\alpha$.

Some fundamental property of quasi-homogeneous functions and vector fields are reviewed here.
\begin{lem}[\cite{asym2}]
\label{temporary-label2}
A quasi-homogenous function $f_0$ of type $(\alpha_1,\ldots,\alpha_n)$ and order $k$ satisfies the following differential equation:
\begin{equation}\label{temporary-label1}
\sum_{l=1}^n \alpha_l y_l \frac{\partial f_0}{\partial y_l}({\bf y}) = k f_0({\bf y}),
\end{equation}
equivalently
\begin{equation*}
(\nabla_{\bf y} f_0({\bf y}))^T \Lambda_{\alpha} {\bf y} = k f_0({\bf y}).
\end{equation*}
\end{lem}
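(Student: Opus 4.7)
The plan is to apply the standard Euler-type trick: differentiate the defining homogeneity identity with respect to the scaling parameter $s$ and then evaluate at $s=1$.

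More concretely, I would start from the identity
\begin{equation*}
f_0(s^{\Lambda_\alpha} {\bf y}) = s^k f_0({\bf y})
\end{equation*}
which holds for all $s>0$ and all ${\bf y}\in U$, by definition of a quasi-homogeneous function of type $\alpha$ and order $k$. Both sides are smooth functions of $s$ on $(0,\infty)$ (using the $C^1$ regularity of $f_0$ assumed implicitly in the statement so that the partial derivatives $\partial f_0/\partial y_l$ appearing in the conclusion make sense). Differentiating the right-hand side in $s$ gives $k s^{k-1} f_0({\bf y})$. Differentiating the left-hand side via the chain rule, using that the $l$-th component of $s^{\Lambda_\alpha}{\bf y}$ is $s^{\alpha_l} y_l$ with derivative $\alpha_l s^{\alpha_l - 1} y_l$, gives
\begin{equation*}
\frac{d}{ds} f_0(s^{\Lambda_\alpha}{\bf y}) = \sum_{l=1}^n \frac{\partial f_0}{\partial y_l}(s^{\Lambda_\alpha}{\bf y})\, \alpha_l s^{\alpha_l - 1} y_l.
\end{equation*}
Setting $s = 1$ in both expressions yields exactly
\begin{equation*}
\sum_{l=1}^n \alpha_l y_l \frac{\partial f_0}{\partial y_l}({\bf y}) = k f_0({\bf y}),
\end{equation*}
which in matrix notation is $(\nabla_{\bf y} f_0({\bf y}))^T \Lambda_\alpha {\bf y} = k f_0({\bf y})$.

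I do not expect any serious obstacle here: the argument is the standard Euler identity extension to anisotropic (quasi-homogeneous) scaling, and the admissibility of $U$ with respect to $\alpha$ ensures that $s^{\Lambda_\alpha}{\bf y}$ remains in $U$ for $s$ in a neighborhood of $1$, so the differentiation in $s$ is legitimate. The only minor point worth noting is that the lemma tacitly assumes $f_0$ is at least $C^1$; once this is granted, the argument is a one-line chain rule computation followed by evaluation at $s=1$.
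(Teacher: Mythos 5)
Your proof is correct and is the standard Euler-identity argument (differentiate the scaling relation $f_0(s^{\Lambda_\alpha}{\bf y}) = s^k f_0({\bf y})$ in $s$ and set $s=1$); the paper itself states this lemma without proof, citing it from the reference \cite{asym2}, where the same computation is the intended justification. Your remark that $s^{\Lambda_\alpha}{\bf y}$ stays in the admissible domain $U$ near $s=1$ and that $C^1$ regularity is tacitly needed is an accurate and sufficient treatment of the only technical points.
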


See \cite{asym2} for detailed arguments about asymptotic behavior of their derivatives.

\begin{lem}[\cite{asym2}]
\label{lem-identity-QHvf}
A quasi-homogeneous vector field $f=(f_1,\ldots,f_n)$ of 
type $\alpha = (\alpha_1,\ldots,\alpha_n)$ and order $k+1$ satisfies the following differential equation:
\begin{equation}
\label{temporary-label3}
\sum_{l=1}^n \alpha_l y_l \frac{\partial f_i}{\partial y_l}({\bf y}) = (k+\alpha_i) f_i({\bf y}) \qquad (i=1,\ldots,n).
\end{equation}
This equation can be rephrased as
\begin{equation}\label{temporay-label4}
(D f)({\bf y}) \Lambda_\alpha \mathbf{y} = \left( k I+ \Lambda_\alpha \right) f({\bf y}).
\end{equation}
\end{lem}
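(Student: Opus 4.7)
The plan is to reduce the statement to Lemma \ref{temporary-label2} applied componentwise. By Definition \ref{dfn-AQH}, saying that $f=(f_1,\ldots,f_n)$ is a quasi-homogeneous vector field of type $\alpha$ and order $k+1$ means precisely that each coordinate function $f_i$ is a quasi-homogeneous function of type $\alpha$ and order $k+\alpha_i$. So the hypothesis of Lemma \ref{temporary-label2} is available for each $f_i$ individually, simply with the ``$k$'' there replaced by $k+\alpha_i$.

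Applying Lemma \ref{temporary-label2} to $f_i$ then yields
\[
\sum_{l=1}^n \alpha_l y_l \frac{\partial f_i}{\partial y_l}(\mathbf{y}) = (k+\alpha_i) f_i(\mathbf{y}),
\]
which is \eqref{temporary-label3}. If one wants to avoid invoking Lemma \ref{temporary-label2} as a black box, the same identity falls out by differentiating $f_i(s^{\Lambda_\alpha}\mathbf{y}) = s^{k+\alpha_i} f_i(\mathbf{y})$ with respect to $s$ and evaluating at $s=1$: the left-hand side produces $\sum_l \alpha_l y_l (\partial f_i/\partial y_l)(\mathbf{y})$ by the chain rule (using $\tfrac{d}{ds} s^{\alpha_l}|_{s=1} = \alpha_l$), and the right-hand side produces $(k+\alpha_i) f_i(\mathbf{y})$.

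To get the matrix form \eqref{temporay-label4}, I would just read off both sides of \eqref{temporary-label3} entrywise. The $i$-th component of $(Df)(\mathbf{y}) \Lambda_\alpha \mathbf{y}$ is
\[
\sum_{l=1}^n \frac{\partial f_i}{\partial y_l}(\mathbf{y})\,\alpha_l y_l,
\]
which matches the left-hand side of \eqref{temporary-label3}. Similarly, since $kI+\Lambda_\alpha = \diag(k+\alpha_1,\ldots,k+\alpha_n)$, the $i$-th component of $(kI+\Lambda_\alpha)f(\mathbf{y})$ is $(k+\alpha_i) f_i(\mathbf{y})$, matching the right-hand side. Stacking the identities for $i=1,\ldots,n$ completes the proof.

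There is no real obstacle here; the only thing to be careful about is the index bookkeeping to ensure that the scalar identity for each component is indeed the $i$-th row of the claimed matrix identity, and that the orders used in Lemma \ref{temporary-label2} are shifted correctly from $k$ to $k+\alpha_i$.
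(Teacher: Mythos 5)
Your proof is correct and is exactly the standard argument this paper relies on: the lemma is quoted from \cite{asym2} without proof here, and Definition \ref{dfn-AQH} does define a quasi-homogeneous vector field precisely by requiring each $f_i$ to be quasi-homogeneous of order $k+\alpha_i$, so the componentwise reduction to Lemma \ref{temporary-label2} (equivalently, differentiating $f_i(s^{\Lambda_\alpha}{\bf y})=s^{k+\alpha_i}f_i({\bf y})$ in $s$ at $s=1$) and the entrywise reading of the matrix identity are all that is needed. No gaps.
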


\begin{rem}
Note that these characterizations still hold when the type $\alpha$ contains a component with $\alpha_i = 0$.
In particular, the framework involving quasi-homogeneity discussed here can be applied to dynamics at infinity for nonautonomous systems.
\end{rem}

Throughout successive sections, consider an (autonomous) $C^r$ vector field\footnote{
In \cite{asym1} $r\geq 2$ was assumed, which was for justification of asymptotic expansions, while such an extra smoothness is not necessary for the present purpose.
} (\ref{nonaut-extend}) with $r\geq 1$, 
where $f: U\equiv \mathbb{R}\times \tilde U \to \mathbb{R}^n$ is asymptotically quasi-homogeneous of type $\alpha = (0, \alpha_1,\ldots, \alpha_n)$ and order $k+1$ at infinity, where $\tilde U\subset \mathbb{R}^n$ is an admissible set with respect to $\alpha$.

\subsection{Quasi-parabolic embeddings}
\label{sec:global}

Here we review an example of embeddings which embed the original (locally compact) phase space into a compact manifold to characterize \lq\lq infinity" as a bounded object.
While there are several choices of embeddings, the following embedding to characterize {\em dynamics at infinity} is applied here.

\begin{dfn}[Quasi-parabolic embedding, cf. \cite{MT2020_1}]\rm
\label{dfn-quasi-para}
Let $\{\beta_i\}_{i\in I_\alpha}$ be the collection of natural numbers so that 
\begin{equation}
\label{LCM}
\alpha_i \beta_i \equiv c \in \mathbb{N},\quad i\in I_\alpha
\end{equation}
is the least common multiplier.
In particular, $\{\beta_i\}_{i\in I_\alpha}$ is chosen to be the smallest among possible collections.
Let $p_\alpha({\bf y})$ be a functional given by
\begin{equation}
\label{func-p}
p_\alpha({\bf y}) \equiv \left( \sum_{i \in I_\alpha} y_i^{2\beta_i} \right)^{1/2c}.
\end{equation}
Define the mapping $T_{{\rm para};\alpha}: \mathbb{R}^n \to \mathbb{R}^n$ as the inverse of
\begin{equation}
\label{parabolic-cpt}
S_{{\rm para};\alpha}({\bf x}) = {\bf y},\quad y_j = \kappa_\alpha^{\alpha_j} x_j,\quad j=1,\ldots, n,
\end{equation}
where 
\begin{equation*}
\kappa_\alpha = \kappa_\alpha({\bf x}) = (1- p_\alpha({\bf x})^{2c})^{-1} \equiv \left( 1 - \sum_{j\in I_\alpha} x_j^{2\beta_j}\right)^{-1}.
\end{equation*}
We say the mapping $T_{{\rm para};\alpha}$ the {\em quasi-parabolic embedding (with type $\alpha$)}.
\end{dfn}

\begin{rem}
\label{rem-kappa}
The functional $\kappa_\alpha = \tilde \kappa_\alpha({\bf y})$ as a functional determined by ${\bf y}$ is implicitly determined by $p_\alpha({\bf y})$.
Details of such a characterization of $\kappa_\alpha$ in terms of ${\bf y}$, and the bijectivity and smoothness of $T_{{\rm para};\alpha}$ are shown in \cite{MT2020_1} with a general class of embeddings, where the embedding is referred to as {\em compactifications}.
\end{rem}

As proved in \cite{MT2020_1}, $T_{{\rm para};\alpha}$ maps $U$ one-to-one onto the set
$\mathcal{D} \equiv \{{\bf x}\in U \mid p_\alpha({\bf x}) < 1\}$.
Infinity in the original coordinate then corresponds to a point on the level set of $p_\alpha$:
\begin{equation*}
\mathcal{E} = \{{\bf x} \in U \mid p_\alpha({\bf x}) = 1\}.
\end{equation*}
\begin{dfn}\rm
We call the set $\mathcal{E}$ the {\em horizon}.
\end{dfn}

\subsection{Dynamics at infinity in nonautonomous systems}
\label{label-dynamics-infinity}
Once we fix an embedding associated with the type $\alpha = (0, \alpha_1, \ldots, \alpha_n)$ of the vector field $f$ with order $k+1$, we can derive the vector field which makes sense including the horizon.
Then the {\em dynamics at infinity} makes sense through the appropriately transformed vector field called the {\em desingularized vector field}, denoted by $g$.
The common approach is twofold.
Firstly, we rewrite the vector field (\ref{ODE-original}), or (\ref{nonaut-extend}) with respect to the new variable used in embeddings.
Secondly, we introduce the time-scale transformation of the form $d\tau = q({\bf x})\kappa_\alpha({\bf x}(t))^k dt$ for some function $q({\bf x})$ which is bounded including the horizon. 
We then obtain the vector field with respect to the new time variable $\tau$, which is continuous, including the horizon.

\begin{rem}
\label{rem-choice-cpt}
Continuity of the desingularized vector field $g$ including the horizon is guaranteed by the smoothness of $f$ and asymptotic quasi-homogeneity (\cite{MT2020_1}).
In the case of parabolic-type embeddings, $g$ inherits the smoothness of $f$ including the horizon, which is not always the case of other embeddings in general. 
Details are discussed in \cite{Mat2018}.
\end{rem}

\begin{dfn}[Time-scale desingularization]\rm 
Define the new time variable $\tau$ by
\begin{equation}
\label{time-desing-nonaut}
d\tau = (1-p_\alpha({\bf x})^{2c})^{-k}\left\{1-\frac{2c-1}{2c}(1-p_\alpha({\bf x})^{2c}) \right\}^{-1}d\eta,
\end{equation}
equivalently
\begin{equation*}
\eta - \eta_0 = \int_{\tau_0}^\tau \left\{1-\frac{2c-1}{2c}(1-p_\alpha({\bf x}(\tau))^{2c}) \right\}(1-p_\alpha({\bf x}(\tau))^{2c})^k d\tau,
\end{equation*}
where $\tau_0$ and $\eta_0$ denote the correspondence of initial times, ${\bf x}(\tau) = T({\bf y}(\tau))$ and ${\bf y}(\tau)$ is a solution ${\bf y}(\eta)$ under the parameter $\tau$.
We shall call (\ref{time-desing-nonaut}) {\em the time-scale desingularization of order $k+1$}.
\end{dfn}

We then obtain the corresponding desingularized vector field $g^{\rm ext}$ defined below with $g = (g_1, \ldots, g_n)^T$;
\begin{align}
\notag
\frac{d}{d\tau}\begin{pmatrix}
t \\ {\bf x}
\end{pmatrix} &= g^{\rm ext}(t,{\bf x}) \equiv \begin{pmatrix}
g_0(t,{\bf x}) \\
g(t, {\bf x})
\end{pmatrix} \\
\label{desing-para-nonaut}
	&= \left(1-\frac{2c-1}{2c}(1-p_\alpha({\bf x})^{2c}) \right)\tilde f^{\rm ext}(t, {\bf x}) - G(t, {\bf x})\Lambda^{\rm ext}_\alpha \begin{pmatrix}
t \\ {\bf x}
\end{pmatrix}
\end{align}
with the following notations, which are consistent with the general derivation in autonomous systems \cite{Mat2025_NHIM}:
\begin{align}
\notag
\tilde f^{\rm ext}(t,{\bf x}) &= \begin{pmatrix}
\tilde f_0, \tilde f_1, \ldots, \tilde f_n
\end{pmatrix}^T,\quad f_0(t,{\bf y}) \equiv 1,\\
\label{f-tilde}
\tilde f_j(t, x_1,\ldots, x_n) &:= \kappa_\alpha^{-(k+\alpha_j)} f_j(t, \kappa_\alpha^{\alpha_1}x_1, \ldots, \kappa_\alpha^{\alpha_n}x_n),\quad j=0,1,\ldots, n,\\
\label{Gx}
G(t, {\bf x}) &\equiv \sum_{j\in I_\alpha} \frac{x_j^{2\beta_j-1}}{\alpha_j}\tilde f_j(t, {\bf x}),\quad 
\Lambda^{\rm ext}_\alpha = {\rm diag}(0, \alpha_1, \ldots, \alpha_n).
\end{align}
Note that, in the above notation, $\tilde f_0(t,{\bf x}) = (1-p_\alpha({\bf x})^{2c})^k$ via (\ref{f-tilde}).
The above identifications are also consistent with the evolution of $t$ followed by the time-scale desingularization (\ref{time-desing-nonaut}).

Smoothness of $f$ and the asymptotic quasi-homogeneity guarantee the smoothness of the right-hand side $g$ of (\ref{desing-para-nonaut}) including the horizon $\mathcal{E}\equiv \{p_\alpha({\bf x}) = 1\}$, {\em provided\footnote{
This requirement is essential in the nonautonomous case, while it is not the case of the autonomous case.
Indeed, the function $g_0$ is explicitly included in the desingularized vector field $g^{\rm ext}$, while it is included only in the formula of $t_{\max}$ in (\ref{tmax}) for autonomous systems.
In any case, if $k\in (0,1)$, then $g_0$ is not smooth on $\mathcal{E}$.
}
$k=0$ or $k\geq 1$}.
In particular, {\em dynamics at infinity}, such as divergence of solution trajectories to specific directions, is characterized through dynamics generated by (\ref{desing-para-nonaut}) around the horizon. 
See \cite{Mat2025_NHIM} for details.

\begin{rem}[Invariant structure]
\label{rem-invariance}
The horizon $\mathcal{E}$ is a codimension one invariant submanifold of $\widetilde{D}\equiv \mathcal{D}\cup \mathcal{E}$. 
Indeed, direct calculations yield that
\begin{equation*}
\left. \frac{d}{d \tau}p_\alpha({\bf x}(\tau))^{2c}\right|_{\tau=0} = 0\quad \text{ whenever }\quad (t, {\bf x}(0))\in \mathcal{E}.
\end{equation*}
\end{rem}

\subsection{Type-I nonautonomous blow-up}
Through the embedding we have introduced, 
dynamics around the horizon characterize dynamics at infinity, including blow-up behavior.
For a point ${\bf p}_\ast = (t_\ast, {\bf x}_\ast)$, let
\begin{equation*}
W_{\rm loc}^s({\bf p}_\ast) = W_{\rm loc}^s({\bf p}_\ast; g^{\rm ext}) := \{ (t, {\bf x})\in U \mid |\varphi_{g^{\rm ext}}(t,{\bf x}) - \varphi_{g^{\rm ext}}(t,{\bf p}_\ast) | \to 0\, \text{ as }\,t\to +\infty\}
\end{equation*}
be the {\em (local) stable set} of ${\bf p}_\ast$ for the dynamical system generated by $g^{\rm ext}$, where $U$ is a neighborhood of ${\bf p}_\ast$ in $\mathbb{R}^{n+1}$ or an appropriate phase space, and $\varphi_{g^{\rm ext}}$ is the flow generated by $g^{\rm ext}$.
In a special case where 
${\bf p}_\ast$ is a point on a (boundaryless, compact, connected) {\em normally hyperbolic invariant manifold} $M$ (NHIM for short), the stable set $W_{\rm loc}^s({\bf p}_\ast)$ admits a smooth manifold structure in a small neighborhood of ${\bf p}_\ast$ through a stable foliation $\mathcal{F}^s$ of $W^s_{\rm loc}(M) = W^s_{\rm loc}(M; g^{\rm ext})$, the {\em (local) stable manifold} of $M$.
Following \cite{Mat2025_NHIM}, where a description of blow-ups by means of NHIMs on the horizon is provided, we shall use the notation below.
For any set $M\subset \widetilde{D}\equiv \mathcal{D}\cup \mathcal{E}$, $\bar t\in \mathbb{R}$ and an interval $I\subset \mathbb{R}$, let
\begin{equation}
\label{M_slice}
M_{\bar t} \equiv M\cap \{t = \bar t\},\quad M_I \equiv M\cap \{t\in I\}
\end{equation}
be the slice and the tube of $M$ on $I$, respectively.

\begin{ass}
\label{ass-nonaut-inv}
Fix an initial time $t_0\in \mathbb{R}$.
There is a compact interval $I'$ with $t_0 \in {\rm int}I' (\not = \emptyset)$ such that the system (\ref{desing-para-nonaut}) admits an invariant manifold $M\subset \mathcal{E}$ satisfying
\begin{align}
\notag
&M_{I'} = \{ (t, {\bf x}_\ast(t)) \mid t\in I',\, {\bf x}_\ast(t) \in \mathcal{E} \text{ is an equilibrium for $\varphi_g$ satisfying (\ref{NH-spec-nonaut-stationary}) below}\}\\
\label{NH-spec-nonaut-stationary}
&\sharp \{{\rm Spec}(Dg^{\rm ext}(t, {\bf x}_\ast(t))) \cap i\mathbb{R} \} = 1\quad \text{ for all }\quad t\in I'
\end{align}
where ${\rm Spec}(A)$ denotes the set of eigenvalues of a squared matrix $A$, 
that is, $M_{I'}$ is a curve of hyperbolic equilibria parameterized by $t$.
\end{ass}

\begin{thm}[Nonautonomous blow-up in a special case, cf. \cite{Mat2025_NHIM}]
\label{thm:blowup}
Suppose that $g$ admits an invariant manifold $M\subset \mathcal{E}$ satisfying all requirements in Assumption \ref{ass-nonaut-inv}. 
Let $I \subset I'$ be any compact interval satisfying $t_0 \in I \subset {\rm int}I'$.
Finally, suppose that a solution ${\bf y}(t)$ of (\ref{nonaut-extend}) with a bounded initial point $(t_0, {\bf y}_0 (= {\bf y}(t_0))) \in I\times \mathbb{R}^n$ whose image $(t(\tau), {\bf x}(\tau)) = T_{{\rm para};\alpha}((t(\tau), {\bf y}(\tau)))$ for $T_{{\rm para};\alpha}$ is on $W_{\rm loc}^s(M_I; g^{\rm ext})$.
Then ${\bf y}(t)$ is a blow-up solution of (\ref{nonaut-extend}), equivalently of (\ref{ODE-original}).
In particular, $t_{\max}\in I$ and ${\bf y}(t)$ diverges as $t\to t_{\max}-0$.
\par
Moreover, for ${\rm Spec}(Dg^{\rm ext}(t_{\max}, {\bf x}_\ast(t_{\max})))\setminus i\mathbb{R} \equiv \{\lambda_j\}_{j=1}^n$, if the non-resonance condition\footnote{
In \cite{Mat2025_NHIM}, this condition is referred to as the {\em Sternberg-Sell condition of order $1$}. 
In the present case, it is nothing but the non-resonance condition because the \lq\lq spectrum" of the linearized matrix(-valued function) consists of discrete eigenvalues.
} is satisfied with $r\geq 4$, namely
\begin{equation*}
a_1 \lambda_1 + \cdots + a_n \lambda_n - \lambda_j \not = 0
\end{equation*}
for any $j\in \{1,\ldots, n\}$ and any $(a_1, \cdots, a_n) \in \mathbb{Z}_{>0}^n$ with $\sum_{j=1}a_j \geq 2$, then we have
\begin{align*}
p_\alpha({\bf y}(t)) \sim C_0 (t_{\max}-t)^{-1/k}\quad \text{ as }\quad t \to t_{\max}-0
\end{align*}
for some constant $C_0 > 0$ as well as
\begin{equation*}
y_i(t) \sim C_i (t_{\max}-t)^{-\alpha_i /k} \quad \text{ as }\quad t \to t_{\max}-0
\end{equation*} 
for some constants $C_i$, as far as $x_{\ast, i}\not = 0$ with $(t_{\max}, {\bf x}_\ast)\in M_I$.
\end{thm}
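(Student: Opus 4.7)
The plan is to translate convergence on the stable manifold $W^s_{\rm loc}(M_I; g^{\rm ext})$ in the rescaled time $\tau$ back into blow-up in the original time $t$. Because the trajectory $(t(\tau), {\bf x}(\tau))$ lies on $W^s_{\rm loc}(M_I; g^{\rm ext})$ and $M_{I'}$ consists of hyperbolic equilibria by Assumption \ref{ass-nonaut-inv}, it will converge exponentially fast to some point $(t_\ast, {\bf x}_\ast(t_\ast)) \in M_I$ as $\tau \to +\infty$; the limit point lies in $M_I$ (and not merely in $M_{I'}$) because the stable manifold is fibered over $M_I$ by hypothesis. Since $M \subset \mathcal{E}$ gives $p_\alpha({\bf x}_\ast(t_\ast)) = 1$, we have $p_\alpha({\bf x}(\tau)) \to 1$ and hence $\kappa_\alpha({\bf x}(\tau)) = (1 - p_\alpha({\bf x}(\tau))^{2c})^{-1} \to \infty$. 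The identity $p_\alpha({\bf y}) = \kappa_\alpha({\bf x}) \cdot p_\alpha({\bf x})$, which follows from $y_i = \kappa_\alpha^{\alpha_i} x_i$ together with $\alpha_i \beta_i = c$ in (\ref{LCM}), then forces $p_\alpha({\bf y}(\tau)) \to \infty$, so ${\bf y}$ diverges along the trajectory.

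Next, I would identify $t_\ast$ with $t_{\max}$ and show it is finite. From (\ref{desing-para-nonaut}) together with (\ref{f-tilde}), the fact that $f_0 \equiv 1$, and the vanishing first entry of $\Lambda^{\rm ext}_\alpha$, the first component of (\ref{desing-para-nonaut}) reads
\begin{equation*}
\frac{dt}{d\tau} = g_0(t,{\bf x}) = (1-p_\alpha({\bf x})^{2c})^k \left( 1 - \tfrac{2c-1}{2c}(1 - p_\alpha({\bf x})^{2c}) \right),
\end{equation*}
which vanishes to order $k$ on the horizon. Combined with the exponential decay of $1-p_\alpha({\bf x}(\tau))^{2c}$ along the stable foliation, $g_0(t(\tau), {\bf x}(\tau))$ will be integrable on $[0,\infty)$, so $t_\ast = t_0 + \int_0^{\infty} g_0 \, d\tau$ is finite and, being the limit of the monotone function $t(\tau)$, equals $t_{\max}$. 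Since the limit point $(t_\ast, {\bf x}_\ast(t_\ast))$ belongs to $M_I$, we conclude $t_{\max} \in I$.

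For the asymptotic rates under the non-resonance condition with $r \geq 4$, I would apply the Sternberg--Sell theorem in the form used in \cite{Mat2025_NHIM} to linearize $g^{\rm ext}$ near $(t_{\max}, {\bf x}_\ast(t_{\max}))$, with the zero eigenvalue corresponding to the center direction $\partial_t$ along $M$ and the hyperbolic spectrum $\{\lambda_j\}_{j=1}^n$ describing the transverse directions. In the linearizing coordinates, the component transverse to $\mathcal{E}$ decays like $C e^{-\mu\tau}$ for a specific stable eigenvalue $\mu$, and using Lemma \ref{lem-identity-QHvf} at the equilibrium on the horizon one identifies $1 - p_\alpha({\bf x}(\tau))^{2c} \sim C_1 e^{-\mu\tau}$. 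Substituting into the displayed expression for $dt/d\tau$ and integrating gives $t_{\max} - t(\tau) \sim C_2 e^{-k\mu\tau}$, so that $\kappa_\alpha({\bf x}(\tau)) \sim C_3 (t_{\max}-t)^{-1/k}$. The first rate then follows from $p_\alpha({\bf y}) = \kappa_\alpha p_\alpha({\bf x})$ with $p_\alpha({\bf x}) \to 1$; the componentwise rates follow from $y_i = \kappa_\alpha^{\alpha_i} x_i \sim x_{\ast,i}(t_{\max}) (t_{\max}-t)^{-\alpha_i/k}$ whenever $x_{\ast,i} \neq 0$.

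The main obstacle will be identifying the transverse eigenvalue $\mu$ and verifying that, after undoing the time-scale desingularization, $\mu$ cancels cleanly so that the final blow-up exponents are the universal values $-1/k$ and $-\alpha_i/k$, independent of the specific spectral data at the equilibrium. This rests on the quasi-homogeneity identity of Lemma \ref{lem-identity-QHvf} specialized at the equilibrium on the horizon, which pins down $\mu$ algebraically in terms of the type $\alpha$ and order $k+1$. A secondary subtlety is that the zero eigenvalue along $M$ lies outside the non-resonance statement (which is formulated only for $\{\lambda_j\}_{j=1}^n$), so the parameterized center-stable version of Sternberg--Sell must be invoked; this is where the regularity threshold $r \geq 4$ is consumed.
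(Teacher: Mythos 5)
Your outline is sound and follows the intended mechanism; note, however, that this paper does not actually prove Theorem \ref{thm:blowup} --- it is imported verbatim from \cite{Mat2025_NHIM} --- so there is no in-paper proof to compare against, only the surrounding machinery (the formula (\ref{tmax}) for $t_{\max}$ and Lemma \ref{lem-G}), with which your argument is fully consistent. Your reduction chain is correct: the identity $p_\alpha({\bf y})=\kappa_\alpha({\bf x})\,p_\alpha({\bf x})$ does follow from $\alpha_i\beta_i=c$, the first component of (\ref{desing-para-nonaut}) is exactly $g_0=(1-p_\alpha^{2c})^k\frac{1+(2c-1)p_\alpha^{2c}}{2c}$, and its integrability over $[\tau_0,\infty)$ (using the standing assumption $k>0$ from Definition \ref{dfn-AQH}) gives $t_{\max}=t_\ast<\infty$. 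One simplification you should notice: you do not need the full linearization to identify the transversal decay rate $\mu$. Lemma \ref{lem-G} gives $\kappa_\alpha^{-1}(\tau)=\kappa_\alpha^{-1}(\tau_0)\exp\bigl(-\int_{\tau_0}^{\tau}G\bigr)$ exactly, so $1-p_\alpha({\bf x}(\tau))^{2c}\sim C_1e^{-C_\ast\tau}$ with $\mu=C_\ast=G(t_{\max},{\bf x}_\ast(t_{\max}))>0$ as soon as $\int^\infty(G-C_\ast)\,d\tau$ converges, which follows from the exponential convergence of the trajectory to the hyperbolic equilibrium; the Sternberg--Sell/non-resonance hypothesis with $r\geq 4$ is what licenses that quantitative convergence (and handles the zero eigenvalue along $M$ as a parameter direction), not the identification of $\mu$ itself. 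As you correctly observe, $\mu$ then cancels in $\kappa_\alpha\sim(t_{\max}-t)^{-1/k}$, and the componentwise rates need only $x_i(\tau)\to x_{\ast,i}\neq 0$. Two small points worth making explicit: positivity of the limiting constant in $1-p_\alpha^{2c}\sim C_1e^{-C_\ast\tau}$ follows because the trajectory starts in $\mathcal{D}$ and $\mathcal{E}$ is invariant (so $\kappa_\alpha^{-1}>0$ for all $\tau$), and the smoothness of $g_0$ on $\mathcal{E}$ needed to apply hyperbolic theory to $g^{\rm ext}$ implicitly restricts to $k\geq 1$ (cf.\ the paper's footnote on (\ref{desing-para-nonaut})).
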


Note that $M_I$ is a manifold with boundary admitting {\em normally hyperbolic} structure (e.g., \cite{W2013_NHIM}).
The key point of the theorem is that blow-up solutions for (\ref{ODE-original}) are characterized as trajectories on local stable manifolds of equilibria
on the horizon $\mathcal{E}$ for the desingularized vector field.
Investigations of blow-up structure are therefore reduced to those of stable manifolds of equilibria (or general invariant sets) on the horizon for the associated vector field.
Moreover, the theorem also claims that, under mild assumptions, (normally) hyperbolic equilibria on the horizon induce {\em type-I blow-up. 
That is, the leading term of the blow-up behavior is determined by the type $\alpha$ and the order $k+1$ of $f$.}
An explicit formula of blow-up time $t_{\max}$ is provided through (\ref{time-desing-nonaut}) as follows;
\begin{equation}
\label{tmax}
t_{\max} = t_0 + \frac{1}{2c}\int_{\tau_0}^\infty \left(1 + (2c-1) p_\alpha({\bf x}(\tau))^{2c}\right) (1-p_\alpha({\bf x}(\tau))^{2c})^k d\tau,
\end{equation}
where $t_0 = t(\tau_0)$.
The numerical integrations of $t_{\max}$ approximated by step functions are applied to computing $t_{\max}$ in Section \ref{section-ex-Hamiltonian}.
See e.g., \cite{LMT2023, MT2020_1} for rigorous enclosures of $t_{\max}$.


\par
\bigskip

We end this section by providing several properties to describe features of dynamics on the horizon. 
First we consider a function $G$ in (\ref{Gx}), which is essential to characterize equilibria on the horizon mentioned in Section \ref{section-review-asym}.
The gradient\footnote{
Notations are referred to in the beginning of Section \ref{section-correspondence}.
}
 of the horizon $\mathcal{E} = \{p_\alpha(t, {\bf x}) = 1\}$ at $(t, {\bf x})\in \mathcal{E}$ is given by
\begin{equation*}
D p_\alpha({\bf x}) = \frac{p_\alpha({\bf x})^{1-2c}}{c} \left(0, \beta_1x_1^{2\beta_1-1}, \ldots, \beta_nx_n^{2\beta_n-1} \right)^T = \frac{1}{c} \left(0, \beta_1x_1^{2\beta_1-1}, \ldots, \beta_nx_n^{2\beta_n-1} \right)^T,
\end{equation*}
where $\beta_l$ is set as $0$ whenever $\alpha_l = 0$.
In particular, 
\begin{equation}
\label{grad-xast}
D p_\alpha({\bf x}_\ast) = \frac{1}{c}\left(0, \beta_1x_{\ast,1}^{2\beta_1-1}, \ldots, \beta_n x_{\ast,n}^{2\beta_n-1} \right)^T
\end{equation}
holds at an equilibrium $(t_\ast, {\bf x}_\ast) \in \mathcal{E}$.
For the frequent use in the following discussions, write
\begin{equation*}
D p_\alpha({\bf x}_\ast) \equiv (0, D_{\bf x} p_\alpha({\bf x}_\ast)^T)^T,\quad D_{\bf x} p_\alpha({\bf x}_\ast)^T = \frac{1}{c}\left( \beta_1x_{\ast,1}^{2\beta_1-1}, \ldots, \beta_n x_{\ast,n}^{2\beta_n-1} \right).
\end{equation*}
Similarly, we observe that
\begin{equation}
\label{grad-2c-xast}
D (p_\alpha( {\bf x})^{2c})^T = 2\left(0, \beta_1x_1^{2\beta_1-1}, \ldots, \beta_nx_n^{2\beta_n-1} \right)
\end{equation}
for any $(t,{\bf x})\in \tilde{\mathcal{D}}$.
Using the gradient, the function $G(t,{\bf x})$ in (\ref{Gx}) is also written by
\begin{equation*}
G(t,{\bf x}) = \sum_{j\in I_\alpha} \frac{\beta_j}{c}x_j^{2\beta_j-1}\tilde f_j(t, {\bf x}) = \frac{1}{2c}D (p_\alpha({\bf x})^{2c})^T \tilde f^{\rm ext}(t, {\bf x}). 
\end{equation*}
From (\ref{grad-xast}) and (\ref{grad-2c-xast}), we have $D (p_\alpha( {\bf x}_\ast)^{2c})^T = 2c D p_\alpha( {\bf x}_\ast)^T$ for any $(t_\ast, {\bf x}_\ast) \in \mathcal{E}$, in particular
\begin{equation*}
G(t_\ast, {\bf x}_\ast) = \frac{1}{2c}D (p_\alpha( {\bf x}_\ast)^{2c})^T \tilde f^{\rm ext}(t_\ast, {\bf x}_\ast) = Dp_\alpha( {\bf x}_\ast)^T \tilde f^{\rm ext}(t_\ast, {\bf x}_\ast). 
\end{equation*}

\begin{lem}
\label{lem-G}
\begin{equation*}
\kappa_\alpha^{-1} \frac{d\kappa_\alpha}{d\tau} = G(t(\tau), {\bf x}(\tau)),
\end{equation*}
where $G(t, {\bf x})$ is given in (\ref{Gx}).
\end{lem}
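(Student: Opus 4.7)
The plan is to compute $\frac{d\kappa_\alpha}{d\tau}$ directly from the relation $\kappa_\alpha^{-1} = 1 - p_\alpha({\bf x})^{2c} = 1 - \sum_{j\in I_\alpha} x_j^{2\beta_j}$, using the chain rule and the explicit form of the desingularized vector field (\ref{desing-para-nonaut}); then show that the resulting expression collapses to $\kappa_\alpha\cdot G$ after invoking the least-common-multiplier identity $\alpha_j\beta_j = c$.

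First I would differentiate $\kappa_\alpha^{-1}$ with respect to $\tau$, which immediately gives
\begin{equation*}
\kappa_\alpha^{-1}\frac{d\kappa_\alpha}{d\tau} = \kappa_\alpha \sum_{j\in I_\alpha} 2\beta_j x_j^{2\beta_j-1}\frac{dx_j}{d\tau}.
\end{equation*}
Then I would substitute the $j$-th component of (\ref{desing-para-nonaut}),
\begin{equation*}
\frac{dx_j}{d\tau} = A(t,{\bf x})\,\tilde f_j(t,{\bf x}) - G(t,{\bf x})\,\alpha_j x_j,\qquad A(t,{\bf x}) := 1-\frac{2c-1}{2c}(1-p_\alpha({\bf x})^{2c}),
\end{equation*}
to split the sum into two pieces.

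In the first piece, using $\beta_j/c = 1/\alpha_j$ (which follows from $\alpha_j\beta_j = c$), the combination $\sum_{j\in I_\alpha} \frac{\beta_j}{c} x_j^{2\beta_j-1}\tilde f_j$ is exactly $G(t,{\bf x})$ as defined in (\ref{Gx}), producing a term of the form $2c\,\kappa_\alpha\,A\,G$. In the second piece, the same identity $\alpha_j\beta_j = c$ collapses $\sum_{j\in I_\alpha} 2\beta_j\alpha_j x_j^{2\beta_j}$ to $2c\,p_\alpha({\bf x})^{2c}$, yielding a term of the form $-2c\,\kappa_\alpha\,G\,p_\alpha({\bf x})^{2c}$. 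Factoring out $2c\kappa_\alpha G$ leaves the scalar $A - p_\alpha({\bf x})^{2c}$.

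The final step is the algebraic identity
\begin{equation*}
A(t,{\bf x}) - p_\alpha({\bf x})^{2c} = \left(1-\frac{2c-1}{2c}(1-p_\alpha({\bf x})^{2c})\right) - p_\alpha({\bf x})^{2c} = \frac{1}{2c}\bigl(1-p_\alpha({\bf x})^{2c}\bigr) = \frac{1}{2c}\kappa_\alpha^{-1},
\end{equation*}
which makes the constant $2c$ and one power of $\kappa_\alpha$ cancel, leaving precisely $G(t,{\bf x})$. I do not expect any real obstacle here; the only thing to watch is the bookkeeping of the constant $2c$ and the verification that the coefficient combination genuinely reduces to $\kappa_\alpha^{-1}/(2c)$, which is where the specific tuning of the time-scale desingularization (\ref{time-desing-nonaut}) — with the factor $\{1-\tfrac{2c-1}{2c}(1-p_\alpha^{2c})\}^{-1}$ rather than a plain $\kappa_\alpha^{-k}$ — is essential.
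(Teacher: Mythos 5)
Your proposal is correct and follows essentially the same route as the paper's proof: differentiate $\kappa_\alpha^{-1} = 1-p_\alpha({\bf x})^{2c}$ along the flow, substitute the desingularized vector field (\ref{desing-para-nonaut}), and use $\alpha_j\beta_j = c$ to collapse both sums into multiples of $G$ and $p_\alpha^{2c}G$. The only difference is notational — the paper writes the chain rule via the gradient $D(p_\alpha^{2c})^T$ acting on the full extended vector field and keeps the identity in the form $\kappa_\alpha^{-2}\,d\kappa_\alpha/d\tau = \kappa_\alpha^{-1}G$, while you expand componentwise and multiply through by $\kappa_\alpha$ first; the computations are identical.
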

Note that $G$ does depend on $t$, while the functional $\kappa_\alpha$ is independent of $t$ from the choice of type $\alpha$.
Nevertheless, the essence of the proof is the same as the autonomous cases (\cite{asym2}), paying attention to the treatment of $t$ and its scaling.

\begin{proof}
Direct calculations with (\ref{LCM}) yield that
\begin{align*}
\kappa_\alpha^{-2} \frac{d\kappa_\alpha}{d\tau} &\equiv -\frac{d(\kappa_\alpha)^{-1}}{d\tau} \\
	&= D (p_\alpha({\bf x})^{2c})^T \begin{pmatrix}
\frac{dt}{d\tau} & \frac{d{\bf x}}{d\tau}
\end{pmatrix}^T\\
	&= D (p_\alpha({\bf x})^{2c})^T\left( \frac{1}{2c}\left(1 + (2c-1)p_\alpha({\bf x})^{2c} \right)\tilde f^{\rm ext}(t, {\bf x}) - G(t, {\bf x})\Lambda^{\rm ext}_\alpha \begin{pmatrix}
t \\ {\bf x}
\end{pmatrix}\right) \\
	&= \left(1 + (2c-1)p_\alpha({\bf x})^{2c} \right)G(t, {\bf x}) - 2cp_\alpha({\bf x})^{2c}G(t, {\bf x})  \\
	&= (1-p_\alpha({\bf x})^{2c})G(t, {\bf x})\\
	&= \kappa_\alpha^{-1}G(t, {\bf x}).
\end{align*}
\end{proof}

\section{Eigenstructure of dynamics at infinity and a simplified blow-up criterion}
\label{section-correspondence}

This section addresses several structural correspondences between dynamics around equilibria on the horizon for desingularized vector fields and another nature of blow-ups; {\em asymptotic expansions} derived in \cite{asym1}.
As a result, we see that a criterion for their existence is simplified through the latter nature.
Unless otherwise mentioned, let $g^{\rm ext}$ be the desingularized vector field (\ref{desing-para-nonaut}) associated with $f$.
\par
Because we shall use specific objects frequently, several abbreviations below are introduced.
\begin{dfn}[Summary of notations]\rm
Several notations we frequently use in the present section are collected here.
\begin{itemize}
\item $I = I_{n+1}$: the identity matrix in $\mathbb{R}^{n+1}$, while $I_m$ denotes the identity matrix in $\mathbb{R}^m$ for $m\in \mathbb{Z}_{>0}$.
\item $(t_\ast, {\bf x}_\ast)$: an equilibrium on the horizon $\mathcal{E}$ for the desingularized vector field.
\item $Dg^{\rm ext}_\ast:= Dg^{\rm ext}(t_\ast, {\bf x}_\ast)$; the derivative of $g^{\rm ext}$ with respect to $(t,{\bf x})$ at $(t_\ast, {\bf x}_\ast)$.
\item $(D_{\bf x}g)_\ast$: $D_{\bf x}g(t_\ast, {\bf x}_\ast)$, the ${\bf x}$-derivative of $g$; the ${\bf x}$-component of $g^{\rm ext}$.
\item $(D_t g)_\ast:= D_t g(t_\ast, {\bf x}_\ast)$, the $t$-derivative of $g$.
\item $D\tilde f^{\rm ext}_\ast := D\tilde f^{\rm ext}(t_\ast, {\bf x}_\ast)$: the derivative of $\tilde f^{\rm ext}$ with respect to $(t,{\bf x})$ at $(t_\ast, {\bf x}_\ast)$.
\item $(D\tilde f_{\alpha, k}^{\rm ext})_\ast := D\tilde f_{\alpha, k}^{\rm ext}(t_\ast, {\bf x}_\ast)$: the derivative of $\tilde f_{\alpha, k}^{\rm ext}$ with respect to $(t,{\bf x})$ at $(t_\ast, {\bf x}_\ast)$.
\item $(D_{\bf x}\tilde f)_\ast := D_{\bf x}\tilde f(t_\ast, {\bf x}_\ast)$, the ${\bf x}$-derivative of $\tilde f$; the ${\bf x}$-component of $\tilde f^{\rm ext}$.
\item $(D_t \tilde f)_\ast := D_t \tilde f(t_\ast, {\bf x}_\ast)$, the $t$-derivative of $\tilde f$.
\item $\tilde f_\ast := \tilde f(t_\ast, {\bf x}_\ast)$ and $\tilde f^{\rm ext}_\ast := \tilde f^{\rm ext}(t_\ast, {\bf x}_\ast)$; values of the corresponding functions evaluated at $(t_\ast, {\bf x}_\ast)$.
\item $\tilde f_{\ast,i} := \tilde f_i(t_\ast, {\bf x}_\ast)$; the $i$-th component of $\tilde f_i(t_\ast, {\bf x}_\ast)$.
\item $(D_{\bf x}p_\alpha)_\ast := D_{\bf x}p_\alpha ({\bf x}_\ast) \in \mathbb{R}^n$, the ${\bf x}$-gradient of $p_\alpha$ at $(t_\ast, {\bf x}_\ast)$.
\item $(Dp_\alpha)_\ast := D_{(t,{\bf x})} p_\alpha ({\bf x}_\ast) \in \mathbb{R}^{n+1}$, the $(t,{\bf x})$-gradient of $p_\alpha$ at $(t_\ast, {\bf x}_\ast)$.
\item $DG_\ast := DG(t_\ast, {\bf x}_\ast)$: the $(t,{\bf x})$-gradient of $G$ at $(t_\ast, {\bf x}_\ast)$.
\item $T_{(t_\ast, {\bf x}_\ast)}\mathcal{E}$: the tangent space of $\mathcal{E}$ at $(t_\ast, {\bf x}_\ast)$.
\item $T_{{\bf x}_\ast}\mathcal{E}_{\bf x}$: the tangent space of the projection $\mathcal{E}_{\bf x} \subset \mathbb{R}^n$ of $\mathcal{E}$ onto the ${\bf x}$-component at ${\bf x}_\ast \in \mathcal{E}_{\bf x}$.
\end{itemize}

\end{dfn}

\subsection{A system associated with multi-order asymptotic expansion of type-I blow-up solutions}
\label{section-review-asym}
We quickly introduce the methodology of multi-order asymptotic expansions of type-I blow-up solutions proposed in \cite{asym1}.
The method begins with the following ansatz, which can be easily verified through the desingularized vector field and our blow-up description; Theorem \ref{thm:blowup}.

\begin{ass}
\label{ass-fundamental}
The asymptotically quasi-homogeneous system (\ref{ODE-original}) of type $\alpha$ and the order $k+1$ admits a solution 
\begin{equation*}
{\bf y}(t) = (y_1(t), \ldots, y_n(t))^T
\end{equation*}
 which blows up at $t = t_{\max} < \infty$ with the type-I blow-up behavior, namely\footnote{
For two scalar functions $h_1$ and $h_2$, $h_1 \sim h_2$ as $t\to t_{\max}-0$ iff $(h_1(t)/h_2(t))\to 1$ as $t\to t_{\max}-0$. 
 }
\begin{equation}
\label{blow-up-behavior}
y_i(t) \sim c_i\theta(t)^{-\alpha_i / k}, 
\quad t\to t_{\max}-0,\quad i=1,\ldots, n
\end{equation}
for some constants $c_i\in \mathbb{R}$, where $\theta(t) = t_{\max} - t$.
\end{ass}
An aim in this subject is, under the above assumption, to write ${\bf y}(t)$ as
\begin{equation}
\label{blow-up-sol}
{\bf y}(t) = \theta(t)^{-\frac{1}{k}\Lambda_\alpha} {\bf Y}(t),\quad {\bf Y}(t) = (Y_1(t), \ldots, Y_n(t))^T
\end{equation}
with the asymptotic expansion by means of {\em general asymptotic series}\footnote{
For two scalar functions $h_1$ and $h_2$, $h_1 \ll h_2$ as $t\to t_{\max}-0$ iff $(h_1(t)/h_2(t))\to 0$ as $t\to t_{\max}-0$. 
For two vector-valued functions ${\bf h}_1$ and ${\bf h}_2$ with ${\bf h}_i = (h_{1;i}, \ldots, h_{n;i})$, ${\bf h}_1 \ll {\bf h}_2$ iff $h_{l;1} \ll h_{l;2}$ for each $l = 1,\ldots, n$.
}
\begin{align}
\label{Y-asym}
{\bf Y}(t) &= {\bf Y}_0 + \tilde {\bf Y}(t),\quad \tilde {\bf Y}(t) \ll {\bf Y}_{0}(t)\quad (t\to t_{\max}-0)
\end{align}
and determine the concrete form of the factor ${\bf Y}(t)$.

Decompose the vector field $f^{\rm ext}$ into two terms as follows:
\begin{equation*}
f^{\rm ext}(t, {\bf y}) = f^{\rm ext}_{\alpha, k}(t, {\bf y}) + f^{\rm ext}_{\rm res}(t, {\bf y}),
\end{equation*}
where $f^{\rm ext}_{\alpha, k}$ is the quasi-homogeneous component of $f$ and $f^{\rm ext}_{\rm res}$ is the residual (i.e., lower-order) terms.
The componentwise expressions are 
\begin{equation*}
f^{\rm ext}_{\alpha, k}(t, {\bf y}) = (f_{0;\alpha, k}(t, {\bf y}), \ldots, f_{n;\alpha, k}(t, {\bf y}))^T,\quad f^{\rm ext}_{\rm res}(t, {\bf y}) = (f_{0;{\rm res}}(t, {\bf y}), \ldots, f_{n;{\rm res}}(t, {\bf y}))^T,
\end{equation*}
respectively.
The similar expressions are also used to other vector fields such as $f$.
\begin{rem}
\label{rem-QH-0th}
In nonautonomous systems, the $0$-th component $f_0(t,{\bf y}) = 1$ is regarded as the residual term, which is compatible with asymptotic quasi-homogeneity.
In other words, $f_{0;\alpha, k}(t, {\bf y}) \equiv 0$.
\end{rem}
Substituting (\ref{blow-up-sol}) into (\ref{ODE-original}), we derive the system of $(t, {\bf Y}(t))$, which is 
\begin{equation}
\label{blow-up-basic}
\frac{d}{ds}\begin{pmatrix}
t \\ {\bf Y}
\end{pmatrix} = \theta(t)^{-1}\left\{ - \frac{1}{k}\Lambda^{\rm ext}_\alpha \begin{pmatrix}
t \\ {\bf Y}
\end{pmatrix} + f^{\rm ext}_{\alpha, k}(t, {\bf Y}) \right\} + \theta(t)^{ \frac{1}{k}\Lambda^{\rm ext}_\alpha } f^{\rm ext}_{{\rm res}}\left( \theta(t)^{-\frac{1}{k}\Lambda^{\rm ext}_\alpha } \begin{pmatrix}
t \\ {\bf Y}
\end{pmatrix} \right).
\end{equation}
From the asymptotic quasi-homogeneity of $f^{\rm ext}$, the most singular part of the above system yields an identity which the leading term ${\bf Y}_0$ of ${\bf Y}(t)$ must satisfy.

\begin{dfn}[Balance law]\rm
\label{dfn-balance}
We call the identity
\begin{equation}
\label{0-balance}
-\frac{1}{k}\Lambda^{\rm ext}_\alpha \begin{pmatrix}
t \\ {\bf Y}_0
\end{pmatrix} + f^{\rm ext}_{\alpha, k}(t, {\bf Y}_0) = 0
\end{equation}
{\em a balance law} for the blow-up solution ${\bf y}(t)$ for (\ref{ODE-original}).
Note that the identity in the $0$-th (namely, $t$-)component is the trivial one.
\end{dfn}
The next step is to derive the collection of systems for $\tilde {\bf Y}_j(t)$ by means of {\em inhomogeneous linear systems}, but we only mention a key concept towards our aim here.

\begin{dfn}[Blow-up power eigenvalues]\rm
\label{dfn-blow-up-power-ev}
Suppose that a nonzero root ${\bf Y}_0$ of the balance law (\ref{0-balance}) is given.
We call the constant matrix
\begin{equation}
\label{blow-up-power-determining-matrix}
A^{\rm ext} = - \frac{1}{k}\Lambda^{\rm ext}_\alpha + D f^{\rm ext}_{\alpha, k}(t, {\bf Y}_0)
\end{equation}
the {\em blow-up power-determining matrix} for the blow-up solution ${\bf y}(t)$, and call the eigenvalues $\{\lambda_i\}_{i=0}^n \equiv {\rm Spec}(A^{\rm ext})$ the {\em blow-up power eigenvalues}, where the derivative $D = (D_t, D_{\bf y})^T$ is with respect to $(t,{\bf y})$, and eigenvalues with nontrivial multiplicity are distinguished in this expression, except specifically noted.
Finally, we shall use the following expression of $A^{\rm ext}$: 
\begin{equation}
\label{Aext-sub}
A^{\rm ext} = \begin{pmatrix}
0 & {\bf 0}_n^T\\
D_t f_{\alpha, k}(t, {\bf Y}_0) & A
\end{pmatrix},\quad A\in M_n(\mathbb{R}).
\end{equation}
\end{dfn}
The balance law (\ref{0-balance}) and the matrix $A^{\rm ext}$ can provide multi-order asymptotic expansions of blow-up solutions of the form (\ref{blow-up-sol}) for nonautonomous systems like (\ref{ODE-original}) in the similar way to autonomous systems, as derived in \cite{asym1}.
Procedures of asymptotic expansions of blow-ups for nonautonomous systems will be omitted in the present paper due to the similarity of our setting for discussing expansions. 
Instead, we concentrate on the correspondence of dynamics at infinity to structures derived from the balance law (\ref{0-balance}) and the matrix $A^{\rm ext}$.

\subsection{Balance law and equilibria on the horizon}

The first issue for the correspondence of dynamical structures is \lq\lq equilibria" among two systems.
Recall that equilibria for the desingularized vector field (\ref{desing-para-nonaut}) associated with quasi-parabolic embeddings satisfy
\begin{equation}
\label{balance-para}
 \left(1-\frac{2c-1}{2c}(1-p_\alpha({\bf x})^{2c}) \right)\tilde f^{\rm ext}(t, {\bf x}) = G(t, {\bf x}) \Lambda^{\rm ext}_\alpha \begin{pmatrix}
t \\ {\bf x}
\end{pmatrix},
\end{equation}
where $G(t, {\bf x})$ is given in (\ref{Gx}).
Equilibria $(t_\ast,{\bf x}_\ast)$ with ${\bf x}_\ast = (x_{\ast, 1}, \ldots, x_{\ast, n})^T$ on the horizon $\mathcal{E}$ satisfy $p_\alpha({\bf x}_\ast) \equiv 1$ and hence the following identity holds:
\begin{equation}
\label{const-horizon-0}
\tilde f_{\ast, i} = \alpha_i x_{\ast, i} G(t_\ast, {\bf x}_\ast),\quad i = 1,\ldots, n,
\end{equation}
equivalently
\begin{equation}
\label{const-horizon}
\frac{\tilde f_{\ast, i}}{\alpha_i x_{\ast, i} } = G(t_\ast, {\bf x}_\ast) \equiv C_\ast 
\end{equation}
provided $x_{\ast, i} \not = 0$.
Note that the identity in $t$-term, namely the $0$-th component, automatically holds.
Because at least one $x_i$ is not $0$ on the horizon, the constant $C_\ast$ is determined independently among different $i$'s.
On the other hand, only the quasi-homogeneous part $f_{\alpha, k}$ of $f$ involves equilibria on the horizon.
In general, we have
\begin{align}
\notag
\tilde f_{i;\alpha, k}(t, {\bf x}) &= \kappa_\alpha^{-(k+\alpha_i)} f_{i;\alpha, k}(t, \kappa_\alpha^{\Lambda_\alpha}{\bf x} )\\ 	
\notag
	&= \kappa_\alpha^{-(k+\alpha_i)}\kappa_\alpha^{k+\alpha_i} f_{i;\alpha, k}(t, {\bf x})\\
\label{identity-f-horizon}
	&= f_{i;\alpha, k}(t, {\bf x})
\end{align}
for $(t, {\bf x})=(t, x_1,\ldots, x_n)\in \mathcal{E}$.
The identity (\ref{balance-para}) is then rewritten as follows:
\begin{equation}
\label{const-horizon-0-Cast}
 f^{\rm ext}_{\alpha,k}(t_\ast, {\bf x}_\ast) = G(t_\ast, {\bf x}_\ast) \Lambda^{\rm ext}_\alpha \begin{pmatrix}
t_\ast \\ {\bf x}_\ast
\end{pmatrix} = C_\ast \Lambda^{\rm ext}_\alpha \begin{pmatrix}
t_\ast \\ {\bf x}_\ast
\end{pmatrix}.
\end{equation}
Introducing a scaling parameter $r_{{\bf x}_\ast} (> 0)$, we have
\begin{equation*}
f^{\rm ext}_{\alpha,k}(t_\ast, {\bf x}_\ast) = r_{{\bf x}_\ast}^{-(kI + \Lambda^{\rm ext}_\alpha)} f^{\rm ext}_{\alpha,k}(t_\ast, r_{{\bf x}_\ast}^{\Lambda_\alpha} {\bf x}_\ast )
\end{equation*}
from the quasi-homogeneity.
Note that this identity makes sense including the $0$-th component (cf. Remark \ref{rem-QH-0th}).
Substituting this identity into (\ref{balance-para}), we have
\begin{equation*}
r_{{\bf x}_\ast}^{-(k+\alpha_i)} f_{i;\alpha,k}(t_\ast, r_{{\bf x}_\ast}^{\alpha_1}x_{\ast,1}, \ldots, r_{{\bf x}_\ast}^{\alpha_n}x_{\ast,n}) = \alpha_i x_{\ast,i} C_\ast,\quad i=1,\ldots, n.
\end{equation*}
Here we assume that $r_{{\bf x}_\ast}$ satisfies the following equation:
\begin{equation}
\label{balance-C01}
r_{{\bf x}_\ast}^k C_\ast = \frac{1}{k},
\end{equation}
which implies that $r_{{\bf x}_\ast}$ is uniquely determined once $C_\ast$ is given, {\em provided} $C_\ast > 0$.
The positivity of $C_\ast$ is nontrivial in general, while we have the following result.

\begin{lem}
\label{lem-Cast-nonneg}
Let ${\bf p}_\ast = (t_\ast, {\bf x}_\ast) \in \mathcal{E}$ be an equilibrium for $g^{\rm ext}$ such that it is located on a compact connected NHIM $M\subset \mathcal{E}$ and that the local stable manifold $W^s_{\rm loc}({\bf p}_\ast; g^{\rm ext})$ satisfies $W^s_{\rm loc}({\bf p}_\ast; g^{\rm ext})\cap \mathcal{D}\not = \emptyset$.
Then $C_\ast \equiv G({\bf p}_\ast) \geq 0$.
\end{lem}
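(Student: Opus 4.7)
The plan is to exploit Lemma \ref{lem-G}, which rewrites $G$ along orbits as the logarithmic derivative of $\kappa_\alpha$, together with the fact that $\kappa_\alpha$ blows up precisely as one approaches the horizon. Concretely, pick a point ${\bf q}_0 \in W^s_{\rm loc}({\bf p}_\ast; g^{\rm ext}) \cap \mathcal{D}$ (nonempty by hypothesis), and denote by $(t(\tau), {\bf x}(\tau))$ its forward orbit. By Remark \ref{rem-invariance}, $\mathcal{E}$ is an invariant submanifold for $g^{\rm ext}$, so by uniqueness of solutions the orbit cannot cross $\mathcal{E}$, and hence $p_\alpha({\bf x}(\tau)) < 1$ for every $\tau \geq 0$. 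Since ${\bf q}_0 \in W^s_{\rm loc}({\bf p}_\ast)$, we have $(t(\tau), {\bf x}(\tau)) \to (t_\ast, {\bf x}_\ast)$ as $\tau \to +\infty$, and in particular $p_\alpha({\bf x}(\tau))\to 1-0$, equivalently $\kappa_\alpha({\bf x}(\tau)) \to +\infty$, i.e.\ $\ln \kappa_\alpha({\bf x}(\tau)) \to +\infty$.

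Next I would integrate the identity in Lemma \ref{lem-G} along this orbit. Dividing by $\kappa_\alpha$ and integrating from $0$ to $\tau$ yields
\begin{equation*}
\ln \kappa_\alpha({\bf x}(\tau)) - \ln \kappa_\alpha({\bf x}(0)) = \int_0^\tau G(t(s), {\bf x}(s))\, ds.
\end{equation*}
The left-hand side tends to $+\infty$ as $\tau \to +\infty$ by the previous paragraph, so the right-hand side must also diverge to $+\infty$. On the other hand, continuity of $G$ combined with $(t(s),{\bf x}(s)) \to {\bf p}_\ast$ gives $G(t(s),{\bf x}(s)) \to G({\bf p}_\ast) = C_\ast$ as $s \to +\infty$.

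Now I argue by contradiction: suppose $C_\ast < 0$. Then by continuity there exists $s_0 \geq 0$ such that $G(t(s),{\bf x}(s)) < C_\ast/2 < 0$ for every $s \geq s_0$, so
\begin{equation*}
\int_0^\tau G(t(s),{\bf x}(s))\, ds \;\leq\; \int_0^{s_0} G(t(s),{\bf x}(s))\, ds + \frac{C_\ast}{2}(\tau - s_0) \;\longrightarrow\; -\infty
\end{equation*}
as $\tau \to +\infty$, contradicting the divergence to $+\infty$ established above. Hence $C_\ast \geq 0$. The argument is essentially soft; the only place one needs to be careful is to verify that the orbit through ${\bf q}_0$ is truly forward-asymptotic to ${\bf p}_\ast$ and remains in $\mathcal{D}$ for all $\tau \geq 0$, which is guaranteed by the definition of $W^s_{\rm loc}({\bf p}_\ast;g^{\rm ext})$ together with invariance of $\mathcal{E}$. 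No obstacle beyond this mild bookkeeping is anticipated.
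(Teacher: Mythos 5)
Your proof is correct and follows essentially the same route as the paper's: integrate the identity of Lemma \ref{lem-G} along a forward orbit in $W^s_{\rm loc}({\bf p}_\ast;g^{\rm ext})\cap\mathcal{D}$, note that $\ln\kappa_\alpha\to+\infty$ as the orbit approaches the horizon, and derive a contradiction from the eventual negativity of $G$ when $C_\ast<0$. Your version is slightly more explicit in the quantitative step (the $C_\ast/2$ bound), but the argument is the same.
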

\begin{proof}
Assume that the statement is not true, namely $C_\ast < 0$.
We can choose a solution $(t(\tau), {\bf x}(\tau))$ asymptotic to ${\bf p}_{\ast}$ whose initial point $(t_0, {\bf x}(t_0))$ satisfies $\kappa_\alpha({\bf x}(t_0)) < \infty$ by assumption.
Along such a solution, we integrate $G({\bf p}(\tau))$ with ${\bf p}(\tau) = (t(\tau), {\bf x}(\tau))$.
Lemma \ref{lem-G} indicates that
\begin{equation*}
\int_{\tau_0}^\tau G({\bf p}(\eta))d\eta = \ln \kappa_\alpha({\bf x}(\tau)) - \ln \kappa_\alpha({\bf x}(\tau_0)).
\end{equation*}
Note that $\kappa_\alpha$ is independent of $t$ in an explicit manner, as mentioned after Lemma \ref{lem-G}.
By the continuity of $G$, $G({\bf p}(\tau))$ is always negative along ${\bf p}(\tau)$ in a small neighborhood of ${\bf p}_\ast$ in $W^s_{\rm loc}({\bf p}_\ast; g^{\rm ext})$.
On the other hand, ${\bf p}(\tau)\to {\bf p}_\ast\in \mathcal{E}$ holds as $\tau\to +\infty$, implying $\kappa_\alpha = \kappa_\alpha({\bf x}(\tau))\to +\infty$. 
The real-valued function $\ln r$ is monotonously increasing in $r$, and hence $\ln \kappa_\alpha({\bf x}(\tau))$ diverges to $+\infty$ as $\tau\to \infty$, which contradicts the fact that the integral of $G({\bf p}(\tau))$ is negative.
\end{proof}
At this moment, we cannot exclude the possibility that $C_\ast = 0$, similar to the autonomous case (\cite{asym2}).
Now we {\em assume} $C_\ast \not = 0$.
Then $C_\ast > 0$ holds and $r_{{\bf x}_\ast}$ in (\ref{balance-C01}) is well-defined.
Finally the equation (\ref{balance-para}) is written by 
\begin{align*}
\frac{ \alpha_i}{k} r_{{\bf x}_\ast}^{\alpha_i} x_{\ast, i}  = f_{i;\alpha,k}(t_\ast, r_{{\bf x}_\ast}^{\alpha_1} x_{\ast, 1}, \ldots, r_{{\bf x}_\ast}^{\alpha_n} x_{\ast, n}),\quad i=1,\ldots, n,
\end{align*}
which is nothing but the balance law (\ref{0-balance}).
As a summary, we have the one-to-one correspondence among roots of the balance law and equilibria on the horizon for the desingularized vector field (\ref{desing-para-nonaut}).

\begin{thm}[One-to-one correspondence of the balance]
\label{thm-balance-1to1}
Let $(t_\ast, {\bf x}_\ast)$ with ${\bf x}_\ast = (x_{\ast,1}, \ldots, x_{\ast,n})^T$ be an equilibrium on the horizon for the desingularized vector field (\ref{desing-para-nonaut}).
Assume that $C_\ast$ in (\ref{const-horizon}) is positive so that $r_{{\bf x}_\ast} = (kC_\ast)^{-1/k} >0$ is well-defined.
Then the vector $(t_\ast, {\bf Y}_0)$ with
\begin{equation}
\label{x-to-C}
{\bf Y}_0 = (Y_{0,1},\ldots, Y_{0,n})^T = (r_{{\bf x}_\ast}^{\alpha_1}x_{\ast,1},\ldots, r_{{\bf x}_\ast}^{\alpha_n}x_{\ast,n})^T \equiv r_{{\bf x}_\ast}^{\Lambda_\alpha}{\bf x}_\ast 
\end{equation}
is a root of the balance law (\ref{0-balance}).
\par
Conversely, let $(t_\ast, {\bf Y}_0)$ with ${\bf Y}_0\not \equiv 0$ be a root of the balance law (\ref{0-balance}). 
Then the vector $(t_\ast, {\bf x}_\ast)^T$ with
\begin{equation}
\label{C-to-x}
(x_{\ast,1},\ldots, x_{\ast,n})^T = (r_{{\bf Y}_0}^{-\alpha_1}Y_{0,1},\ldots, r_{{\bf Y}_0}^{-\alpha_n}Y_{0,n})^T \equiv r_{{\bf Y}_0}^{-\Lambda_\alpha}{\bf Y}_0
\end{equation}
is an equilibrium on the horizon for (\ref{desing-para-nonaut}), where $r_{{\bf Y}_0} = p_\alpha({\bf Y}_0) > 0$.
Finally, the quantity $C_\ast = G(t_\ast, {\bf x}_\ast)$ constructed by $(t_\ast, {\bf x}_\ast)$ through (\ref{C-to-x}) is positive.
\end{thm}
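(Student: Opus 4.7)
The proof splits into two directions, and the scaling argument culminating in (\ref{balance-C01}) already does essentially all of the heavy lifting for both. For the forward direction, I would start from the equilibrium equation (\ref{balance-para}) specialized to a point $(t_\ast,{\bf x}_\ast)\in \mathcal{E}$: since $p_\alpha({\bf x}_\ast)=1$, the prefactor in (\ref{desing-para-nonaut}) is $1$ and the equilibrium condition collapses to (\ref{const-horizon-0}), equivalently (\ref{const-horizon-0-Cast}) with common ratio $C_\ast=G(t_\ast,{\bf x}_\ast)$. Under the hypothesis $C_\ast>0$ the number $r_{{\bf x}_\ast}=(kC_\ast)^{-1/k}$ is well defined, and applying quasi-homogeneity of $f^{\rm ext}_{\alpha,k}$ of type $(0,\alpha_1,\ldots,\alpha_n)$ and order $k+1$ rescales (\ref{const-horizon-0-Cast}) to exactly the balance law (\ref{0-balance}) for ${\bf Y}_0=r_{{\bf x}_\ast}^{\Lambda_\alpha}{\bf x}_\ast$. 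The zeroth ($t$-)component is automatic because $\alpha_0=0$ and Remark~\ref{rem-QH-0th} gives $f_{0;\alpha,k}\equiv 0$, so only the $n$ non-trivial slots need verification.

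For the converse direction, given a nonzero root $(t_\ast,{\bf Y}_0)$ of (\ref{0-balance}), I would set $r_{{\bf Y}_0}:=p_\alpha({\bf Y}_0)$ and define ${\bf x}_\ast:=r_{{\bf Y}_0}^{-\Lambda_\alpha}{\bf Y}_0$. The scaling identity $p_\alpha(s^{\Lambda_\alpha}{\bf y})=s\,p_\alpha({\bf y})$, which follows immediately from (\ref{func-p}) and $\alpha_i\beta_i=c$, gives $p_\alpha({\bf x}_\ast)=1$, placing ${\bf x}_\ast$ on the horizon. Invoking the quasi-homogeneity of $f^{\rm ext}_{\alpha,k}$ as in the forward direction inverts the earlier rescaling: one obtains
\begin{equation*}
f^{\rm ext}_{\alpha,k}(t_\ast,{\bf Y}_0)=r_{{\bf Y}_0}^{kI+\Lambda^{\rm ext}_\alpha}\,f^{\rm ext}_{\alpha,k}(t_\ast,{\bf x}_\ast),
\end{equation*}
so substituting into (\ref{0-balance}) and cancelling $r_{{\bf Y}_0}^{\alpha_i}$ componentwise produces exactly (\ref{const-horizon-0-Cast}) with $C_\ast=1/(kr_{{\bf Y}_0}^k)>0$; this already proves the final assertion about positivity of $C_\ast$. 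To promote (\ref{const-horizon-0-Cast}) to the full equilibrium condition for $g^{\rm ext}$ at $(t_\ast,{\bf x}_\ast)$, I would note that on the horizon the prefactor $(1-(2c-1)(1-p_\alpha^{2c})/(2c))$ in (\ref{desing-para-nonaut}) equals $1$ while the identity (\ref{identity-f-horizon}) (together with vanishing of residual terms in the limit $\kappa_\alpha\to\infty$ built into (\ref{f-tilde})) identifies $\tilde f^{\rm ext}(t_\ast,{\bf x}_\ast)$ with $f^{\rm ext}_{\alpha,k}(t_\ast,{\bf x}_\ast)$, reducing $g^{\rm ext}(t_\ast,{\bf x}_\ast)=0$ to the identity we have just established.

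The main technical obstacle is careful bookkeeping of the zeroth ($t$-)component throughout: in both directions this slot either reads $0=0$ trivially or invokes $\alpha_0=0$ and Remark~\ref{rem-QH-0th}, and any apparent mismatch between the extended matrix $\Lambda^{\rm ext}_\alpha$ (which has a zero block) and $\Lambda_\alpha$ (used to form $r_{{\bf x}_\ast}^{\Lambda_\alpha}{\bf x}_\ast$ in the spatial components only) must be checked to propagate correctly through the rescalings. A secondary subtlety is non-degeneracy of $r_{{\bf Y}_0}$: one needs at least one $Y_{0,i}$ with $i\in I_\alpha$ to be nonzero so that $p_\alpha({\bf Y}_0)>0$, otherwise the balance law degenerates into $f^{\rm ext}_{\alpha,k}(t_\ast,{\bf Y}_0)=0$ with no corresponding horizon point; I would either flag this implicitly as part of the hypothesis ${\bf Y}_0\not\equiv 0$ interpreted in the homogeneity coordinates, or else exclude it as a spurious trivial root. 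By contrast, no positivity hypothesis is needed in the converse direction, since $C_\ast=1/(kr_{{\bf Y}_0}^k)$ is manifestly positive.
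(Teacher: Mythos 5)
Your proposal is correct and follows essentially the same route as the paper: the forward direction is the rescaling of (\ref{const-horizon-0-Cast}) by $r_{{\bf x}_\ast}=(kC_\ast)^{-1/k}$, the converse normalizes by $r_{{\bf Y}_0}=p_\alpha({\bf Y}_0)$ and inverts the quasi-homogeneous scaling, and positivity of $C_\ast$ drops out as $1/(kr_{{\bf Y}_0}^k)$. The only step left implicit is the identification of the constant $1/(kr_{{\bf Y}_0}^k)$ with $G(t_\ast,{\bf x}_\ast)$ as defined in (\ref{Gx}) (which the paper verifies by direct computation), but this is immediate since $\tilde f^{\rm ext}_\ast = C\,\Lambda^{\rm ext}_\alpha(t_\ast,{\bf x}_\ast)^T$ forces $G(t_\ast,{\bf x}_\ast)=C\sum_{j\in I_\alpha}x_{\ast,j}^{2\beta_j}=C$ on the horizon.
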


\begin{proof}
We have already seen how the first statement is derived, and hence we shall prove the second statement here.
First let
\begin{equation*}
r_{{\bf Y}_0} \equiv p_\alpha({\bf Y}_0) > 0,\quad \bar Y_{0,i} := \frac{Y_{0,i}}{r_{{\bf Y}_0}^{\alpha_i}}.
\end{equation*}
By definition $p_\alpha(\bar {\bf Y}_0) = 1$, where $\bar {\bf Y}_0 = (\bar Y_{0,1},\ldots, \bar Y_{0,n})^T$.
Substituting $\bar {\bf Y}_0$ into the right-hand side of (\ref{balance-para}) with $t = t_\ast$, we have
\begin{align*}
\alpha_i \bar Y_{0,i} \sum_{j\in I_\alpha} \frac{\bar Y_{0,j}^{2\beta_j-1}}{\alpha_j}\tilde f_j(t_\ast, \bar {\bf Y}_0) = \alpha_i \bar Y_{0,i} \sum_{j\in I_\alpha} \frac{\bar Y_{0,j}^{2\beta_j-1}}{\alpha_j} \tilde f_{j; \alpha, k}(t_\ast, \bar {\bf Y}_0),
\end{align*}
where we have used the identity $p_\alpha(\bar {\bf Y}_0) = 1$ and (\ref{identity-f-horizon}).
From quasi-homogeneity of $f_{\alpha, k}$ and the balance law (\ref{0-balance}), we further have
\begin{align*}
 \alpha_i \bar Y_{0,i} \sum_{j\in I_\alpha} \frac{\bar Y_{0,j}^{2\beta_j-1}}{\alpha_j} \tilde f_{j; \alpha, k}(t_\ast, \bar {\bf Y}_0) 
&= \alpha_i \bar Y_{0,i} \sum_{j\in I_\alpha} \frac{\bar Y_{0,j}^{2\beta_j-1}}{\alpha_j} r_{{\bf Y}_0}^{-(k+\alpha_j)}f_{j; \alpha, k}(t_\ast, {\bf Y}_0) \\
&= \alpha_i \bar Y_{0,i} \sum_{j\in I_\alpha} \frac{\bar Y_{0,j}^{2\beta_j-1}}{\alpha_j} r_{{\bf Y}_0}^{-(k+\alpha_j)} \frac{\alpha_j}{k}Y_{0,j} 
 = \alpha_i \bar Y_{0,i} \frac{r_{{\bf Y}_0}^{-k}}{k}\sum_{j\in I_\alpha} \bar Y_{0,j}^{2\beta_j}\\
 &= \alpha_i \bar Y_{0,i} \frac{r_{{\bf Y}_0}^{-k}}{k} = r_{{\bf Y}_0}^{-(k+\alpha_i)} \frac{\alpha_i}{k} Y_{0,i}\\
&= r_{{\bf Y}_0}^{-(k+\alpha_i)} f_{i; \alpha, k}(t_\ast, {\bf Y}_0) = f_{i; \alpha, k}(t_\ast, \bar {\bf Y}_0) = \tilde f_{i; \alpha, k}(t_\ast, \bar {\bf Y}_0),
\end{align*}
implying that $(t_\ast, \bar {\bf Y}_0)$ is a root of (\ref{balance-para}).
\par
For the last statement, we directly calculate $G(t_\ast, {\bf x}_\ast)$:
\begin{align*}
G(t_\ast, {\bf x}_\ast) &= \sum_{j\in I_\alpha} \frac{x_{\ast, j}^{2\beta_j-1}}{\alpha_j}\tilde f_{j,\ast}\\
	&= \sum_{j\in I_\alpha} \frac{x_{\ast, j}^{2\beta_j-1}}{\alpha_j}\tilde f_{j; \alpha, k}(t_\ast, {\bf x}_\ast)\quad \text{(from characterization of equilibria on the horizon)}\\
	&= \sum_{j\in I_\alpha} \frac{r_{{\bf Y}_0}^{-\alpha_j(2\beta_j-1)}Y_{0,j}^{2\beta_j-1}}{\alpha_j}\tilde f_{j; \alpha, k}(t_\ast, r_{{\bf Y}_0}^{-\Lambda_\alpha}{\bf Y}_0)\\
	&= r_{{\bf Y}_0}^{-2c} \sum_{j\in I_\alpha} \frac{r_{{\bf Y}_0}^{\alpha_j}Y_{0,j}^{2\beta_j-1}}{\alpha_j} r_{{\bf Y}_0}^{-(k+\alpha_j)} \tilde f_{j; \alpha, k}(t_\ast, {\bf Y}_0) \quad \text{(from quasi-homogeneity)}\\
	&= r_{{\bf Y}_0}^{-k-2c} \sum_{j\in I_\alpha} \frac{Y_{0,j}^{2\beta_j-1}}{\alpha_j} \tilde f_{j; \alpha, k}(t_\ast, {\bf Y}_0)\\
	&= r_{{\bf Y}_0}^{-k-2c} \sum_{j\in I_\alpha} \frac{Y_{0,j}^{2\beta_j-1}}{\alpha_j} \cdot \frac{\alpha_j}{k} Y_{0,j}\quad \text{(from (\ref{0-balance}))} \\
	&= \frac{1}{k} r_{{\bf Y}_0}^{-k-2c} \sum_{j\in I_\alpha} Y_{0,j}^{2\beta_j}\\
	&= \frac{1}{k} r_{{\bf Y}_0}^{-k} > 0
\end{align*}
because ${\bf Y}_0 \not = {\bf 0}_n$.
\end{proof}

\subsection{Structure of $Dg^{\rm ext}_\ast$ and technical assumptions}
\label{section-Dg}
Before discussing the correspondence of eigenstructures, we shall summarize structures of $Dg^{\rm ext}$.
The argument in \cite{asym2} indicates that $Dg^{\rm ext}$ includes the objects related to the matrix $A^{\rm ext}$.
While a slight modification will be necessary in the nonautonomous case, we obtain the similar decomposition of $Dg^{\rm ext}$ so that the eigenstructures can be extracted through $A^{\rm ext}$.
\par
Here we calculate and investigate details of $Dg^{\rm ext}$, in particular at an equilibrium on the horizon.
Let $(t_\ast, {\bf x}_\ast) \in \mathcal{E}$ be an equilibrium on the horizon $\mathcal{E}$ for $g^{\rm ext}$, and ${\bf v}^{\rm ext}_{\ast,\alpha}\in \mathbb{R}^{n+1}$ be a vector given by
\begin{equation}
\label{v-ast}
{\bf v}^{\rm ext}_{\ast,\alpha} := \Lambda_\alpha^{\rm ext} \begin{pmatrix}
t_\ast \\ {\bf x}_\ast
\end{pmatrix} 
= \begin{pmatrix}
0 \\ \Lambda_\alpha {\bf x}_\ast 
\end{pmatrix} \equiv  \begin{pmatrix}
0 \\ {\bf v}_{\ast, \alpha} 
\end{pmatrix}.
\end{equation}
It follows from (\ref{desing-para-nonaut}) that
\begin{align}
\notag
Dg^{\rm ext}(t, {\bf x}) &= (2c-1)p_\alpha(t, {\bf x})^{2c-1}\tilde f^{\rm ext}(t, {\bf x})D p_\alpha(t, {\bf x})^T +  \left( 1-\frac{2c-1}{2c}(1-p_\alpha(t, {\bf x})^{2c})\right)  D\tilde f^{\rm ext}(t, {\bf x})\\
\label{Dg-general}
	&\quad  - (0, \alpha_1 x_1, \ldots, \alpha_n x_n)^T DG(t, {\bf x})^T - G(t, {\bf x})\Lambda^{\rm ext}_\alpha, \\
\notag
Dg^{\rm ext}_\ast &= (2c-1)\tilde f^{\rm ext}_\ast (D p_\alpha)_\ast^T + D\tilde f^{\rm ext}_\ast - {\bf v}^{\rm ext}_{\ast, \alpha} (D G_\ast)^T - C_\ast \Lambda^{\rm ext}_\alpha \\
\notag
 	&\quad \text{(from the definition of ${\bf v}^{\rm ext}_{\ast, \alpha}$ and (\ref{const-horizon}))}\\
\notag
	&= \left\{ - C_\ast \Lambda^{\rm ext}_\alpha + D\tilde f^{\rm ext}_\ast \right\} + {\bf v}^{\rm ext}_{\ast, \alpha} \left( (2c-1) C_\ast (D p_\alpha)_\ast^T - DG_\ast \right)^T. \quad \text{(from (\ref{const-horizon-0}))}
\end{align}
Next, using (\ref{Gx}) and (\ref{const-horizon}), we have\footnote{
The expression of $DG_\ast$ is formally written including $j\not \in I_\alpha$ for simplicity.
The actual form of the $j$-th component of $DG_\ast$ with $j\not \in I_\alpha$ is $0$.
}
\begin{align*}
DG_\ast &= {\rm diag}\left(0, \frac{2\beta_1-1}{\alpha_1}x_{\ast, 1}^{2\beta_1-2},\ldots, \frac{2\beta_n-1}{\alpha_n}x_{\ast, n}^{2\beta_n-2}\right)C_\ast {\bf v}^{\rm ext}_{\ast, \alpha} \\
	&\quad + (A_g^{\rm ext} + C_\ast  \Lambda^{\rm ext}_\alpha)^T \left(0, \frac{x_{\ast, 1}^{2\beta_1-1}}{\alpha_1},\ldots, \frac{x_{\ast, n}^{2\beta_n-1}}{\alpha_n}\right)^T \\
	&= C_\ast \left(0, 2\beta_1 x_{\ast, 1}^{2\beta_1-1},\ldots, 2\beta_n x_{\ast, n}^{2\beta_n-1}\right)^T
	+ (A_g^{\rm ext})^T  (D p_\alpha)_\ast \quad \text{(using (\ref{grad-xast}))} \\
	&= 2cC_\ast (D p_\alpha)_\ast + (A_g^{\rm ext})^T (D p_\alpha)_\ast,
\end{align*}
where
\begin{align}
\label{Ag-1}
A_g^{\rm ext} &:= -C_\ast \Lambda^{\rm ext}_\alpha + \begin{pmatrix}
0 & {\bf 0}_n^T\\
(D_t \tilde f)_\ast & (D_{\bf x}\tilde f)_\ast
\end{pmatrix}
	\equiv \begin{pmatrix}
0 & {\bf 0}_n^T\\
(D_t \tilde f)_\ast & A_g
\end{pmatrix}.
\end{align}
The Jacobian matrix $Dg^{\rm ext}_\ast$ then has a decomposition 
\begin{equation*}
Dg^{\rm ext}_\ast = A_g^{\rm ext} + B_g^{\rm ext} + \delta_{1,k} A_g^{\rm res},
\end{equation*}
where
\begin{align}
\label{Bg-1}
B_g^{\rm ext} &= - {\bf v}^{\rm ext}_{\ast, \alpha} (D p_\alpha)_\ast^T (A_g^{\rm ext} + C_\ast I_{n+1})
	\equiv \begin{pmatrix}
0 & {\bf 0}_n^T\\
\ast & B_g
\end{pmatrix},\\
\label{Agres-1}
A_g^{\rm res}  &= \begin{pmatrix}
0 & -2c (D_{\bf x} p_\alpha)_\ast^T \\
{\bf 0}_n & O_{n, n}
\end{pmatrix}.
\end{align}
Note that $A_g^{\rm res}$ stems from $\tilde f_0 = O(\kappa^{-k})$ as $\kappa\to \infty$, which has no contribution when $k>1$.
Here we have the following proposition by the same arguments as that discussed in \cite{asym2}.
\begin{prop}[\cite{asym2}]
\label{prop-proj-B}
Let ${\bf v}^{\rm ext}_{\ast,\alpha}$, be the vector given in (\ref{v-ast}), 
and
\begin{equation*}
P^{\rm ext}_\ast := {\bf v}^{\rm ext}_{\ast,\alpha}(D p_\alpha)_\ast^T.
\end{equation*} 
Then $P^{\rm ext}_\ast$ as the linear mapping on $\mathbb{R}^{n+1}$ is the (nonorthogonal) projection\footnote{
In the \lq\lq homogeneous" case $\alpha = (0, 1,\ldots, 1)$ (except the scaling of $t$), this is orthogonal.
} onto ${\rm span}\{{\bf v}^{\rm ext}_{\ast,\alpha}\}$.
Similarly, the map $I-P^{\rm ext}_\ast$ is the (nonorthogonal) projection onto the tangent space $T_{(t_\ast, {\bf x}_\ast)}\mathcal{E}$ along ${\rm span}\{{\bf v}^{\rm ext}_{\ast,\alpha}\}$. 
Moreover, in the matrix form, $P^{\rm ext}_\ast$ is written as follows:
\begin{equation}
\label{proj}
P^{\rm ext}_\ast = \begin{pmatrix}
0 & {\bf 0}_n^T\\
{\bf 0}_n & P_\ast 
\end{pmatrix}.
\end{equation}
\end{prop}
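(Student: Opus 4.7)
The plan is to verify that $P^{\rm ext}_\ast$ is idempotent with the claimed image and kernel, then read off the block structure from the fact that both ${\bf v}^{\rm ext}_{\ast,\alpha}$ and $(Dp_\alpha)_\ast$ have vanishing $t$-component.

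The first step, and the arithmetic heart of the proof, is to show that $(Dp_\alpha)_\ast^T {\bf v}^{\rm ext}_{\ast,\alpha} = 1$. Using the explicit expressions
\[
{\bf v}^{\rm ext}_{\ast,\alpha} = \bigl(0,\, \alpha_1 x_{\ast,1},\ldots, \alpha_n x_{\ast,n}\bigr)^T, \qquad (Dp_\alpha)_\ast = \frac{1}{c}\bigl(0,\, \beta_1 x_{\ast,1}^{2\beta_1-1},\ldots, \beta_n x_{\ast,n}^{2\beta_n-1}\bigr)^T,
\]
the inner product collapses, via the LCM relation $\alpha_i \beta_i = c$ from (\ref{LCM}), to $\sum_{i\in I_\alpha} x_{\ast,i}^{2\beta_i}$. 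Because $({\bf x}_\ast) \in \mathcal{E}$, the horizon condition $p_\alpha({\bf x}_\ast)^{2c} = 1$ gives exactly $\sum_{i\in I_\alpha} x_{\ast,i}^{2\beta_i} = 1$, as required.

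Once this scalar identity is in hand, idempotency follows immediately:
\[
(P^{\rm ext}_\ast)^2 = {\bf v}^{\rm ext}_{\ast,\alpha} \bigl((Dp_\alpha)_\ast^T {\bf v}^{\rm ext}_{\ast,\alpha}\bigr) (Dp_\alpha)_\ast^T = {\bf v}^{\rm ext}_{\ast,\alpha}(Dp_\alpha)_\ast^T = P^{\rm ext}_\ast.
\]
Since $P^{\rm ext}_\ast$ has rank one with range spanned by ${\bf v}^{\rm ext}_{\ast,\alpha}$, it is the projection onto $\mathrm{span}\{{\bf v}^{\rm ext}_{\ast,\alpha}\}$. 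Its kernel is precisely $\{{\bf w}\in\mathbb{R}^{n+1} : (Dp_\alpha)_\ast^T {\bf w} = 0\}$. Because $p_\alpha$ does not depend on $t$, the horizon $\mathcal{E}$ is the product $\mathbb{R}\times\mathcal{E}_{\bf x}$ locally, so its tangent space at $(t_\ast,{\bf x}_\ast)$ is exactly the annihilator of $(Dp_\alpha)_\ast$, i.e.\ $T_{(t_\ast,{\bf x}_\ast)}\mathcal{E}$. Thus $\ker P^{\rm ext}_\ast = T_{(t_\ast,{\bf x}_\ast)}\mathcal{E}$ and $I - P^{\rm ext}_\ast$ is automatically the complementary projection onto $T_{(t_\ast,{\bf x}_\ast)}\mathcal{E}$ along $\mathrm{span}\{{\bf v}^{\rm ext}_{\ast,\alpha}\}$.

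Finally, the block form (\ref{proj}) is a direct consequence of the structure of ${\bf v}^{\rm ext}_{\ast,\alpha}$ and $(Dp_\alpha)_\ast$: both have first coordinate zero, so the outer product
\[
\begin{pmatrix} 0 \\ {\bf v}_{\ast,\alpha} \end{pmatrix}\bigl(0,\, (D_{\bf x}p_\alpha)_\ast^T\bigr)
\]
produces the advertised block-diagonal matrix with lower-right block $P_\ast := {\bf v}_{\ast,\alpha}(D_{\bf x}p_\alpha)_\ast^T$, and the same argument shows $P_\ast$ is itself a projection onto $\mathrm{span}\{{\bf v}_{\ast,\alpha}\}\subset\mathbb{R}^n$ along $T_{{\bf x}_\ast}\mathcal{E}_{\bf x}$. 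The only potential pitfall is bookkeeping with the two different types ($\alpha$ versus $(0,\alpha)$) and the corresponding normalizations; apart from that the argument is essentially the same as in the autonomous case treated in \cite{asym2}, with the first row and column of $P^{\rm ext}_\ast$ vanishing by construction.
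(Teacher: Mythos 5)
Your proposal is correct and follows essentially the same route as the paper: the normalization $(Dp_\alpha)_\ast^T {\bf v}^{\rm ext}_{\ast,\alpha}=1$ via the LCM relation $\alpha_j\beta_j=c$ and the horizon condition, the identification of $T_{(t_\ast,{\bf x}_\ast)}\mathcal{E}$ with the annihilator of $(Dp_\alpha)_\ast$, and the direct outer-product computation for the block form. The extra explicit verification of idempotency is a harmless elaboration of what the paper leaves implicit.
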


Before the proof, it should be noted that
the inner product of the gradient $(D p_\alpha)_\ast$ at an equilibrium $(t_\ast, {\bf x}_\ast) \in \mathcal{E}$ given in (\ref{grad-xast}) and the vector ${\bf v}^{\rm ext}_{\ast,\alpha}$ is unity:
\begin{equation}
\label{inner-gradp-v}
(D p_\alpha)_\ast^T {\bf v}^{\rm ext}_{\ast, \alpha} = \frac{1}{c}\sum_{j\in I_\alpha} \beta_j x_{\ast, j}^{2\beta_j-1} \alpha_j x_{\ast, j} = \frac{c}{c} \sum_{j\in I_\alpha} x_{\ast, j}^{2\beta_j} = 1.
\end{equation}

\begin{proof}
The first two properties follow from the identity $(D p_\alpha)_\ast^T {\bf v}^{\rm ext}_{\ast, \alpha} = 1$ and the fact that ${\bf v}\in T_{(t_\ast, {\bf x}_\ast)}\mathcal{E}$ satisfies $(Dp_\alpha)_\ast^T {\bf v} = 0$.
We shall provide the proof of the last statement, which follows from the direct calculation:
\begin{align*}
{\bf v}^{\rm ext}_{\ast,\alpha} (D p_\alpha)_\ast^T = \frac{1}{c}\begin{pmatrix}
0 \\ \alpha_1 x_{\ast, 1} \\ \vdots \\ \alpha_n x_{\ast, n} 
\end{pmatrix}
\begin{pmatrix}
0, \beta_1x_1^{2\beta_1-1}, \ldots, \beta_nx_n^{2\beta_n-1}
\end{pmatrix} &= \begin{pmatrix}
0 & {\bf 0}_n^T \\
{\bf 0}_n & {\bf v}_{\ast, \alpha} (D_{\bf x} p_\alpha)_\ast^T
\end{pmatrix},
\end{align*}
with $P_\ast \equiv {\bf v}_{\ast, \alpha} (D_{\bf x} p_\alpha)_\ast^T$.
\end{proof}


Using the projection $P^{\rm ext}_\ast$, we conclude that the Jacobian matrix $Dg^{\rm ext}_\ast$ is decomposed as follows:
\begin{align}
\notag
Dg^{\rm ext}_\ast &= A^{\rm ext}_g + B^{\rm ext}_g + \delta_{1,k} A_g^{\rm res}\\
\notag
	&= (I-P_\ast^{\rm ext})A^{\rm ext}_g - C_\ast P_\ast^{\rm ext} +  \delta_{1,k} A_g^{\rm res},\\
\label{dec-Dg}
A^{\rm ext}_g &= \begin{pmatrix}
0 & {\bf 0}_n^T\\
(D_t\tilde f)_\ast & A_g
\end{pmatrix},\quad B^{\rm ext}_g = - P^{\rm ext}_\ast (A^{\rm ext}_g + C_\ast I),\\
\notag
A_g^{\rm res} &= \begin{pmatrix}
0 & -2c (D_{\bf x} p_\alpha)_\ast^T \\
{\bf 0}_n & O_{n,n}
\end{pmatrix}.
\end{align}

The balance law determines the coefficients of type-I blow-ups (cf. \cite{asym1}), which turns out to correspond to equilibria on the horizon for the desingularized vector field.
This correspondence provides a relationship among two different vector fields involving blow-ups.
Arguments here will indicate the correspondence of associated eigenstructures {\em under technical assumptions}.
To see this, we make the following assumption to $f$, which is essential to the following arguments coming from the technical restriction due to the form of parabolic embeddings, while it can be relaxed for general systems.
\begin{ass}[Order restriction, growth in residual terms]
\label{ass-f-1}
$k\geq 1$ is assumed\footnote{
Our description of blow-ups by means of dynamics at infinity relies on local (center-)stable manifolds of {\em compact} invariant sets on the horizon $\mathcal{E}$. 
When $k=0$, the global-in-time trajectories approaching $\mathcal{E}$ must provide $t_{\max} = \infty$ from (\ref{time-desing-nonaut}) or the formula (\ref{tmax}) below. 
Unlike the autonomous cases, $\mathcal{E}$ explicitly depends on $t$ in the nonautonomous setting. 
In particular, the case $k = 0$ in the nonautonomous setting will cause the divergence of $(t(\tau), {\bf x}(\tau))$ in $t$-direction, and the present framework, in particular Theorem \ref{thm:blowup}, cannot be applied.
}. 
For each $i = 1,\ldots, n$,
\begin{equation}
\label{fi-order}
\tilde f_{i;{\rm res}}(t, {\bf x}) = O\left(\kappa_\alpha({\bf x})^{-(1+\epsilon)} \right),\quad \frac{\partial \tilde f_{i;{\rm res}}}{\partial x_l}(t, {\bf x}) = o\left(\kappa_\alpha({\bf x})^{(1+\epsilon)}\right),\quad l=0,\ldots, n
\end{equation}
hold for some $\epsilon > 0$ as $(t, {\bf x})$ approaches to $\mathcal{E}$, where the alias $x_0 = t$ is used.
\end{ass}

\begin{rem}
\label{rem-order-typical}
The estimates (\ref{fi-order}) are typically satisfied if the terms of order $k+\alpha_i - 1$ of $f_i$ are identically $0$.
\end{rem}

Under this assumption, we have the following identity in derivatives.
\begin{lem}
\label{lem-ass-f-1}
Let $(t_\ast, {\bf x}_\ast)\in \mathcal{E}$ be an equilibrium for $g^{\rm ext}$.
Under Assumption \ref{ass-f-1}, we have $(D \tilde f)_\ast = (D \tilde f_{\alpha, k})_\ast$. 
In particular, we have
\begin{equation}
\label{Ag-QH}
A^{\rm ext}_g = -C_\ast \Lambda^{\rm ext}_\alpha + (D\tilde f_{\alpha, k}^{\rm ext})_\ast.
\end{equation}
\end{lem}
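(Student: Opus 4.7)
The plan is to show that the residual terms $\tilde f_{i;{\rm res}}$ contribute nothing to the derivatives $(D_t\tilde f)_\ast$ and $(D_{\bf x}\tilde f)_\ast$ at the equilibrium $(t_\ast,{\bf x}_\ast)\in \mathcal{E}$, so that only the quasi-homogeneous part survives in (\ref{Ag-1}). The second assertion is then a bookkeeping step: one has to check that the $(n+1)\times (n+1)$ block appearing in (\ref{Ag-1}) coincides with $(D\tilde f^{\rm ext}_{\alpha,k})_\ast$, which is where Remark \ref{rem-QH-0th} enters.

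First, I would decompose $\tilde f_i = \tilde f_{i;\alpha,k} + \tilde f_{i;{\rm res}}$ componentwise for $i=1,\ldots, n$, so that the claim $(D\tilde f)_\ast = (D\tilde f_{\alpha,k})_\ast$ reduces to $(D_t\tilde f_{i;{\rm res}})_\ast = 0$ and $(D_{\bf x}\tilde f_{i;{\rm res}})_\ast = {\bf 0}_n^T$ for every $i=1,\ldots, n$. Since $\kappa_\alpha({\bf x}) = (1-p_\alpha({\bf x})^{2c})^{-1}$ diverges as $(t,{\bf x})\to \mathcal{E}$ (where $p_\alpha = 1$), the asymptotic estimates in Assumption \ref{ass-f-1} force both $\tilde f_{i;{\rm res}}$ and each partial $\partial \tilde f_{i;{\rm res}}/\partial x_l$ (with the alias $x_0 = t$) to tend to zero along any sequence $(t,{\bf x})\to (t_\ast,{\bf x}_\ast)$ inside $\widetilde{\mathcal{D}}$. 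The order restriction $k\geq 1$ in the same assumption ensures that $g^{\rm ext}$, and therefore $D\tilde f^{\rm ext}$, extends $C^1$-smoothly up to the horizon (cf.\ Remark \ref{rem-choice-cpt}), so these limits coincide with the actual derivative values at $(t_\ast,{\bf x}_\ast)$. This gives the first assertion.

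For the second assertion, substituting $(D_t\tilde f)_\ast = (D_t\tilde f_{\alpha,k})_\ast$ and $(D_{\bf x}\tilde f)_\ast = (D_{\bf x}\tilde f_{\alpha,k})_\ast$ into (\ref{Ag-1}) produces
$$A_g^{\rm ext} = -C_\ast\Lambda^{\rm ext}_\alpha + \begin{pmatrix} 0 & {\bf 0}_n^T \\ (D_t\tilde f_{\alpha,k})_\ast & (D_{\bf x}\tilde f_{\alpha,k})_\ast \end{pmatrix}.$$
By Remark \ref{rem-QH-0th}, $\tilde f_{0;\alpha,k}\equiv 0$, so the $(t,{\bf x})$-gradient of the $0$-th component of $\tilde f^{\rm ext}_{\alpha,k}$ vanishes identically, and the zero top row above is precisely that gradient. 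Hence the block coincides with $(D\tilde f^{\rm ext}_{\alpha,k})_\ast$, giving the claimed identity $A^{\rm ext}_g = -C_\ast\Lambda^{\rm ext}_\alpha + (D\tilde f^{\rm ext}_{\alpha,k})_\ast$.

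The main obstacle lies in the first step: one must pass the asymptotic estimates of Assumption \ref{ass-f-1}, which a priori hold only in the interior $\mathcal{D}$ where $\kappa_\alpha < \infty$, to genuine values of the derivatives at boundary points of $\mathcal{E}$. This requires the continuous extendability of $D\tilde f^{\rm ext}$ to the horizon, which is a nontrivial regularity property tied both to the parabolic embedding and to the order restriction $k\geq 1$; without the latter, $\tilde f_0 = \kappa_\alpha^{-k}$ would not even be $C^1$ on $\mathcal{E}$, and the limiting arguments for the residuals could break down.
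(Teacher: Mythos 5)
Your overall strategy (kill the residual contributions to the derivative at the horizon, then do the block bookkeeping for the extended matrix) is the same as the paper's, and your second step --- identifying the zero top row with the gradient of $\tilde f_{0;\alpha,k}\equiv 0$ via Remark \ref{rem-QH-0th} --- is fine. The gap is in the first and main step. You claim that the estimates in Assumption \ref{ass-f-1} ``force each partial $\partial\tilde f_{i;{\rm res}}/\partial x_l$ to tend to zero,'' but the second bound in (\ref{fi-order}) is $\partial\tilde f_{i;{\rm res}}/\partial x_l = o\bigl(\kappa_\alpha^{+(1+\epsilon)}\bigr)$, a bound by a quantity that \emph{diverges} as $(t,{\bf x})\to\mathcal{E}$; being little-$o$ of $\kappa_\alpha^{1+\epsilon}$ allows the derivative to blow up (say like $\kappa_\alpha^{1/2}$), so no conclusion about vanishing follows from that estimate alone. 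Nor does the first estimate rescue you: knowing only that $\tilde f_{i;{\rm res}}$ extends continuously and vanishes on the hypersurface $\mathcal{E}=\{p_\alpha=1\}$ would force at most the \emph{tangential} derivatives to vanish there; the transversal derivative is controlled only because the decay is strictly faster than one power of $\kappa_\alpha^{-1}=1-p_\alpha^{2c}$, i.e.\ because $\epsilon>0$.

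The paper's proof supplies exactly the missing mechanism: it factors $\tilde f_{i;{\rm res}} = \kappa_\alpha^{-(1+\epsilon)}\,\tilde f^{(1)}_{i;{\rm res}}$ with $\tilde f^{(1)}_{i;{\rm res}}=O(1)$ and differentiates by the product rule. The term coming from the prefactor is $(1+\epsilon)\kappa_\alpha^{-\epsilon}\,\partial_{x_l}(\kappa_\alpha^{-1})\,\tilde f^{(1)}_{i;{\rm res}}$, which vanishes on $\mathcal{E}$ because $\partial_{x_l}(\kappa_\alpha^{-1})$ is polynomial, $\tilde f^{(1)}_{i;{\rm res}}$ is bounded, and the \emph{extra} factor $\kappa_\alpha^{-\epsilon}$ tends to zero; the term coming from the normalized residual is $\kappa_\alpha^{-(1+\epsilon)}\,\partial_{x_l}\tilde f^{(1)}_{i;{\rm res}}$, where the admissible $o(\kappa_\alpha^{1+\epsilon})$ growth of the derivative is exactly compensated by the decaying prefactor. (The $t$-derivative is easier since $\kappa_\alpha$ does not depend on $t$.) Your appeal to Remark \ref{rem-choice-cpt} and the $C^1$-extendability of $g^{\rm ext}$ does not substitute for this computation: it concerns the $0$-th component $\tilde f_0=\kappa_\alpha^{-k}$ and the regularity of the desingularized field, not the rate at which the residual parts of $\tilde f_1,\ldots,\tilde f_n$ and their gradients decay toward the horizon. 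To repair your proof, insert the factorization and product-rule computation in place of the sentence asserting that the estimates ``force'' the partials to vanish.
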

Remark that the above statement discusses about $f$, not the extended vector field $f^{\rm ext}$.

\begin{proof}
Now $\tilde f_{i;{\rm res}}$ is expressed as
\begin{align*}
\tilde f_{i;{\rm res}}(t, {\bf x}) &\equiv \kappa_\alpha^{-(k+\alpha_i)} f_{i,{\rm res}}(t, \kappa_\alpha^{\Lambda_\alpha}{\bf x})\quad \text{(by (\ref{f-tilde}) and $\alpha_0 = 0$)} \\
	&\equiv \kappa_\alpha^{-(1+\epsilon)} \tilde f_{i;{\rm res}}^{(1)} (t, {\bf x})
\end{align*}
with
\begin{equation*}
\tilde f_{i;{\rm res}}^{(1)}(t, {\bf x}) = O(1),\quad \frac{\partial \tilde f_{i;{\rm res}}^{(1)}}{\partial x_l}(t, {\bf x}) = o(\kappa_\alpha({\bf x})^{1+\epsilon}), \quad l=0,\ldots, n
\end{equation*}
as $(t, {\bf x})$ approaches to $\mathcal{E}$.
The partial derivative of the component $\tilde f_i$ with respect to $x_l$ at $(t_\ast, {\bf x}_\ast)$ is
\begin{equation*}
\frac{\partial \tilde f_i}{\partial x_l}(t_\ast, {\bf x}_\ast) = \frac{\partial \tilde f_{i;\alpha, k}}{\partial x_l}(t_\ast, {\bf x}_\ast) + 
(1+\epsilon)\kappa_\alpha^{-\epsilon}\frac{\partial \kappa_\alpha^{-1}}{\partial x_l}\tilde f_{i;{\rm res}}^{(1)}(t_\ast, {\bf x}_\ast)  + \kappa_\alpha^{-(1+\epsilon)} \frac{\partial \tilde f_{i;{\rm res}}^{(1)}}{\partial x_l}(t_\ast, {\bf x}_\ast ).
\end{equation*}
Using the fact that $\kappa_\alpha^{-1} = 0$ on the horizon, 
our present assumption implies that the \lq\lq gap" terms
\begin{equation*}
(1+\epsilon)\kappa_\alpha^{-\epsilon}\frac{\partial \kappa_\alpha^{-1}}{\partial x_l}\tilde f_{i;{\rm res}}^{(1)}(t_\ast, {\bf x}_\ast)  + \kappa_\alpha^{-(1+\epsilon)} \tilde f_{i;{\rm res}}^{(1)}(t_\ast, {\bf x}_\ast )
\end{equation*}
are identically $0$ on the horizon and hence the Jacobian matrix $D_{\bf x}\tilde f(t_\ast, {\bf x}_\ast)$ with respect to ${\bf x}$ coincides with $D_{\bf x}\tilde f_{\alpha, k}(t_\ast, {\bf x}_\ast)$.
$D_{\bf t}\tilde f(t_\ast, {\bf x}_\ast)$ is simpler because $\kappa_\alpha$ is independent of $t$ and
\begin{equation*}
\frac{\partial \tilde f_i}{\partial t}(t_\ast, {\bf x}_\ast) = \frac{\partial \tilde f_{i;\alpha, k}}{\partial t}(t_\ast, {\bf x}_\ast) + \kappa_\alpha^{-(1+\epsilon)} \frac{\partial \tilde f_{i;{\rm res}}^{(1)}}{\partial t}(t_\ast, {\bf x}_\ast ).
\end{equation*}
\end{proof}

Through this result, $\tilde f^{\rm ext}$ can be identified with $\tilde f_{\alpha, k}^{\rm ext}$ within our present interests.
Also, as mentioned in Remark \ref{rem-invariance}, the horizon $\mathcal{E}$ is a codimension one invariant manifold for $g^{\rm ext}$ and hence the whole eigenstructure is decomposed into the following two types:
\begin{itemize}
\item $n$-independent (generalized) eigenvectors\footnote{
In nonautonomous systems, the $t$-contribution increases the number of remaining eigenvectors. 
} of $Dg^{\rm ext}_\ast$ spanning the tangent space $T_{(t_\ast, {\bf x}_\ast)}\mathcal{E}$.
\item An eigenvector transversal to $T_{(t_\ast, {\bf x}_\ast)}\mathcal{E}$.
\end{itemize}

\subsection{Eigenstructure in \lq\lq transversal" direction; case 1: $k>1$}
\label{section-special-k>1}

We first investigate the eigenstructure of $Dg^{\rm ext}_\ast$ transversal to $T_{(t_\ast, {\bf x}_\ast)}\mathcal{E}$, which shall be called the {\em transversal} eigenpair\footnote{
In \cite{asym2}, this eigenpair was called a \lq\lq {\em common}" eigenpair.
}.
We know that this eigenstructure can be extracted for {\em any} given systems, but the structure is different among cases $k>1$ and $k=1$, because $\tilde f_0(t,{\bf x}) = \kappa_\alpha^{-k}$ in any nonautonomous system (\ref{ODE-original}) from the form of the extended autonomous system (\ref{nonaut-extend}), and its gradient in the ${\bf x}$-direction does not vanish on the horizon when $k=1$.
\par
Here we pay our attention to the simpler case $k>1$, where the structure is easily extracted based on arguments in autonomous cases \cite{asym2}. 
We begin with the special eigenstructure of $A^{\rm ext}$, which is described regardless of $k$.

\begin{prop}[Eigenvalue $1$. cf. \cite{asym2}]
\label{prop-ev1}
Consider an asymptotically quasi-homogeneous vector field $f^{\rm ext}$ of type $\alpha = (0, \alpha_1,\ldots, \alpha_n)$ and order $k+1$.
Suppose that a nontrivial root $(t, {\bf Y}_0)$ of the balance law (\ref{0-balance}) is given.
Then the corresponding blow-up power-determining matrix $A^{\rm ext}$ has an eigenvalue $1$ with the associating eigenvector
\begin{equation}
\label{vector-ev1}
{\bf v}^{\rm ext}_{0,\alpha} = \Lambda^{\rm ext}_\alpha \begin{pmatrix}
t \\ {\bf Y}_0
\end{pmatrix} = \Lambda^{\rm ext}_\alpha \begin{pmatrix}
0 \\ {\bf Y}_0
\end{pmatrix}.
\end{equation} 
\end{prop}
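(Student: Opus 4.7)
The plan is to verify directly that $A^{\rm ext}{\bf v}^{\rm ext}_{0,\alpha} = {\bf v}^{\rm ext}_{0,\alpha}$, by combining two ingredients already available: the Euler-type identity for quasi-homogeneous vector fields (Lemma \ref{lem-identity-QHvf}), and the balance law (\ref{0-balance}) that ${\bf Y}_0$ satisfies. The structural reason this works is that ${\bf v}^{\rm ext}_{0,\alpha}$ is literally $\Lambda^{\rm ext}_\alpha(t,{\bf Y}_0)^T$, so applying the matrix $Df^{\rm ext}_{\alpha,k}(t,{\bf Y}_0)$ to it is exactly the situation addressed by the Euler identity.

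The first step is to check that Lemma \ref{lem-identity-QHvf} applies to the extended field $f^{\rm ext}_{\alpha,k}$ with the extended type $(0,\alpha_1,\ldots,\alpha_n)$ and order $k+1$. The $0$-th component $f_{0;\alpha,k} \equiv 0$ (Remark \ref{rem-QH-0th}) trivially satisfies the quasi-homogeneous identity, and the remark after Lemma \ref{lem-identity-QHvf} confirms that a vanishing entry in $\alpha$ causes no issue. Hence
\begin{equation*}
Df^{\rm ext}_{\alpha,k}(t,{\bf Y}_0)\,\Lambda^{\rm ext}_\alpha \begin{pmatrix} t \\ {\bf Y}_0 \end{pmatrix} = (kI+\Lambda^{\rm ext}_\alpha)\, f^{\rm ext}_{\alpha,k}(t,{\bf Y}_0).
\end{equation*}

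The second step is to substitute the balance law $f^{\rm ext}_{\alpha,k}(t,{\bf Y}_0) = \tfrac{1}{k}\Lambda^{\rm ext}_\alpha (t,{\bf Y}_0)^T$ into the right-hand side. Plugging the two identities into the definition (\ref{blow-up-power-determining-matrix}) of $A^{\rm ext}$ yields
\begin{equation*}
A^{\rm ext}{\bf v}^{\rm ext}_{0,\alpha} = -\tfrac{1}{k}(\Lambda^{\rm ext}_\alpha)^2\begin{pmatrix} t \\ {\bf Y}_0 \end{pmatrix} + \tfrac{1}{k}(kI+\Lambda^{\rm ext}_\alpha)\Lambda^{\rm ext}_\alpha\begin{pmatrix} t \\ {\bf Y}_0 \end{pmatrix} = \Lambda^{\rm ext}_\alpha\begin{pmatrix} t \\ {\bf Y}_0 \end{pmatrix} = {\bf v}^{\rm ext}_{0,\alpha},
\end{equation*}
which is the claim. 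The nontriviality of the eigenvector follows from ${\bf Y}_0 \neq {\bf 0}$ together with the existence of at least one index $i\in I_\alpha$ with $Y_{0,i} \neq 0$; this is guaranteed since the $i$-th component of the balance law forces $Y_{0,i}=0$ whenever $\alpha_i = 0$ would be inconsistent with $f_{i;\alpha,k}$ being genuinely of order $k+\alpha_i$ (here one merely needs that $\Lambda^{\rm ext}_\alpha(t,{\bf Y}_0)^T\neq 0$, which holds as soon as some $\alpha_iY_{0,i}\neq 0$). Finally, the first component of ${\bf v}^{\rm ext}_{0,\alpha}$ is $0$ because $\alpha_0 = 0$, giving the simplified expression on the right of (\ref{vector-ev1}).

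There is no serious obstacle here; the whole content is that the direction $\Lambda^{\rm ext}_\alpha(t,{\bf Y}_0)^T$ encoding the quasi-homogeneous scaling is precisely the fixed direction of the Euler identity, and the balance law pins down the scalar factor so that the eigenvalue equals $1$. The only mild point to watch is that the $t$-component of the computation is trivially $0 = 0$ (both sides have $\alpha_0 = 0$ and $f_{0;\alpha,k}\equiv 0$), so no redundant information is picked up from the extension.
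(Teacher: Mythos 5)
Your proposal is correct and follows essentially the same route as the paper's own proof: apply the Euler-type identity \eqref{temporay-label4} for the quasi-homogeneous part $f^{\rm ext}_{\alpha,k}$ at $(t,\mathbf{Y}_0)$, substitute the balance law \eqref{0-balance}, and read off $A^{\rm ext}\mathbf{v}^{\rm ext}_{0,\alpha}=\mathbf{v}^{\rm ext}_{0,\alpha}$ from the definition \eqref{blow-up-power-determining-matrix}. The extra remarks on the vanishing $t$-component and the nondegeneracy of $\mathbf{v}^{\rm ext}_{0,\alpha}$ are harmless additions that the paper leaves implicit.
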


Note that the matrix $A^{\rm ext}$ only involves the quasi-homogeneous part $f^{\rm ext}_{\alpha, k}$ of $f^{\rm ext}$.

\begin{proof}
Consider \eqref{temporay-label4} at $\mathbf{y} = \mathbf{Y}_0$
with the help of (\ref{0-balance}):
\begin{equation}
D f^{\rm ext}_{\alpha,k} (t, \mathbf{Y}_0) \Lambda^{\rm ext}_\alpha\begin{pmatrix}
t \\ \mathbf{Y}_0 
\end{pmatrix} 
= \left(  k I+ \Lambda^{\rm ext}_\alpha  \right) f^{\rm ext}_{\alpha,k}(t, \mathbf{Y}_0)
= \left( I + \frac{1}{k}\Lambda^{\rm ext}_\alpha \right) \Lambda^{\rm ext}_\alpha \begin{pmatrix}
t \\ \mathbf{Y}_0
\end{pmatrix}.
\end{equation}
Then, using the definition of $A^{\rm ext}$, we have
\begin{align*}
A^{\rm ext} \Lambda^{\rm ext}_\alpha \begin{pmatrix}
t \\ \mathbf{Y}_0
\end{pmatrix}
&= \left( -\frac{1}{k} \Lambda^{\rm ext}_\alpha + Df^{\rm ext}_{\alpha,k}(t, \mathbf{Y}_0) \right) \Lambda^{\rm ext}_\alpha \begin{pmatrix}
t \\ \mathbf{Y}_0
\end{pmatrix}\\
&= -\frac{1}{k} (\Lambda^{\rm ext}_\alpha)^2 \begin{pmatrix}
t \\ \mathbf{Y}_0
\end{pmatrix} + \left(I+ \frac{1}{k} \Lambda^{\rm ext}_\alpha  \right) \Lambda^{\rm ext}_\alpha \begin{pmatrix}
t \\ \mathbf{Y}_0
\end{pmatrix}\\
&= \Lambda^{\rm ext}_\alpha \begin{pmatrix}
t \\ \mathbf{Y}_0
\end{pmatrix},
\end{align*}
which shows the desired statement.
\end{proof}



The \lq\lq transversal" eigenstructure of $Dg^{\rm ext}_\ast$ with $k>1$ is characterized as follows.
\begin{prop}[Transversal eigenpair, $k>1$]
\label{prop-ev-special-1}
Suppose that Assumption \ref{ass-f-1} holds and $k>1$.
Also suppose that $(t_\ast, {\bf x}_\ast) \in \mathcal{E}$ is an equilibrium on the horizon for the associated desingularized vector field $g^{\rm ext}$ in (\ref{desing-para-nonaut}).
Then the Jacobian matrix $Dg^{\rm ext}_\ast$ always possesses the eigenpair $\{-C_\ast, {\bf v}^{\rm ext}_{\ast,\alpha}\}$, where ${\bf v}^{\rm ext}_{\ast,\alpha}$ is given in (\ref{v-ast}).
\end{prop}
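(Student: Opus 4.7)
The plan is to plug the vector ${\bf v}^{\rm ext}_{\ast,\alpha}$ into the decomposition (\ref{dec-Dg}) of $Dg^{\rm ext}_\ast$ and show that each of the three summands either annihilates ${\bf v}^{\rm ext}_{\ast,\alpha}$ or returns $-C_\ast{\bf v}^{\rm ext}_{\ast,\alpha}$. Since $k>1$, the residual piece $\delta_{1,k}A_g^{\rm res}$ vanishes outright, so only the contributions from $(I-P^{\rm ext}_\ast)A^{\rm ext}_g$ and from $-C_\ast P^{\rm ext}_\ast$ matter.

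The first observation I would record is the idempotent action $P^{\rm ext}_\ast {\bf v}^{\rm ext}_{\ast,\alpha}={\bf v}^{\rm ext}_{\ast,\alpha}$, which is immediate from the definition of $P^{\rm ext}_\ast={\bf v}^{\rm ext}_{\ast,\alpha}(Dp_\alpha)_\ast^T$ together with the normalization (\ref{inner-gradp-v}). This already delivers $-C_\ast P^{\rm ext}_\ast{\bf v}^{\rm ext}_{\ast,\alpha}=-C_\ast {\bf v}^{\rm ext}_{\ast,\alpha}$, which is the target eigenvalue times the target eigenvector, so what remains is to verify that $(I-P^{\rm ext}_\ast)A^{\rm ext}_g{\bf v}^{\rm ext}_{\ast,\alpha}=0$.

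For this, I invoke Lemma \ref{lem-ass-f-1} to rewrite $A^{\rm ext}_g=-C_\ast\Lambda^{\rm ext}_\alpha+(D\tilde f^{\rm ext}_{\alpha,k})_\ast$, observing that on the horizon $\tilde f^{\rm ext}_{\alpha,k}\equiv f^{\rm ext}_{\alpha,k}$ by (\ref{identity-f-horizon}), so in particular $(D\tilde f^{\rm ext}_{\alpha,k})_\ast=(Df^{\rm ext}_{\alpha,k})_\ast$. The Euler-type identity (\ref{temporay-label4}) applied to the extended quasi-homogeneous vector field $f^{\rm ext}_{\alpha,k}$ (which has type $(0,\alpha_1,\dots,\alpha_n)$ and order $k+1$ since the $0$-th component is identically $0$) evaluated at $(t_\ast,{\bf x}_\ast)$ gives
\begin{equation*}
(Df^{\rm ext}_{\alpha,k})_\ast\, {\bf v}^{\rm ext}_{\ast,\alpha}=(kI+\Lambda^{\rm ext}_\alpha)\,f^{\rm ext}_{\alpha,k}(t_\ast,{\bf x}_\ast).
\end{equation*}
Then the equilibrium relation (\ref{const-horizon-0-Cast}) identifies $f^{\rm ext}_{\alpha,k}(t_\ast,{\bf x}_\ast)=C_\ast{\bf v}^{\rm ext}_{\ast,\alpha}$, so the right-hand side becomes $kC_\ast{\bf v}^{\rm ext}_{\ast,\alpha}+C_\ast\Lambda^{\rm ext}_\alpha{\bf v}^{\rm ext}_{\ast,\alpha}$. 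Substituting into $A^{\rm ext}_g{\bf v}^{\rm ext}_{\ast,\alpha}$ cancels the $\pm C_\ast\Lambda^{\rm ext}_\alpha{\bf v}^{\rm ext}_{\ast,\alpha}$ terms, leaving $A^{\rm ext}_g{\bf v}^{\rm ext}_{\ast,\alpha}=kC_\ast{\bf v}^{\rm ext}_{\ast,\alpha}$. Applying $(I-P^{\rm ext}_\ast)$ then annihilates this scalar multiple of ${\bf v}^{\rm ext}_{\ast,\alpha}$ by the first observation, finishing the proof.

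The only genuinely delicate point is bookkeeping around the extended system: one must check that the Euler identity (\ref{temporay-label4}) is legitimate for the extended type $(0,\alpha_1,\dots,\alpha_n)$ (allowed because of the remark after Lemma \ref{lem-identity-QHvf} covering $\alpha_i=0$), and that Assumption \ref{ass-f-1} is what permits replacing $(D\tilde f^{\rm ext})_\ast$ by $(D\tilde f^{\rm ext}_{\alpha,k})_\ast$ in $A^{\rm ext}_g$. With those two ingredients in place, the rest is a short algebraic collapse.
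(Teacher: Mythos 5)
Your proposal is correct and follows essentially the same route as the paper: both rest on the identity $A_g^{\rm ext}{\bf v}^{\rm ext}_{\ast,\alpha}=kC_\ast{\bf v}^{\rm ext}_{\ast,\alpha}$ obtained from the Euler identity (\ref{temporay-label4}) plus the equilibrium relation (\ref{const-horizon-0-Cast}), combined with $P^{\rm ext}_\ast{\bf v}^{\rm ext}_{\ast,\alpha}={\bf v}^{\rm ext}_{\ast,\alpha}$ from (\ref{inner-gradp-v}). Your grouping $(I-P^{\rm ext}_\ast)A^{\rm ext}_g-C_\ast P^{\rm ext}_\ast$ is just an algebraic rearrangement of the paper's $A_g^{\rm ext}+B_g^{\rm ext}$ (both appear in (\ref{dec-Dg})), so the two computations are the same.
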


\begin{proof}
Using (\ref{Ag-QH}), the same idea as the proof of Proposition \ref{prop-ev1} can be applied to obtaining
\begin{align*}
A_g^{\rm ext} \Lambda^{\rm ext}_\alpha  \begin{pmatrix}
t_\ast \\ {\bf x}_\ast
\end{pmatrix}
&= \left\{ - C_\ast \Lambda^{\rm ext}_\alpha + (D\tilde f^{\rm ext}_{\alpha,k})_\ast \right\} \Lambda^{\rm ext}_\alpha \begin{pmatrix}
t_\ast \\ {\bf x}_\ast
\end{pmatrix} \quad \text{(from (\ref{Ag-QH}))} \\
&= - C_\ast  (\Lambda^{\rm ext}_\alpha)^2 \begin{pmatrix}
t_\ast \\ {\bf x}_\ast
\end{pmatrix}+ (k I+ \Lambda^{\rm ext}_\alpha ) \tilde f^{\rm ext}_{\alpha, k}(t_\ast, {\bf x}_\ast) \quad \text{(from (\ref{temporay-label4}))} \\ 
&= - C_\ast (\Lambda^{\rm ext}_\alpha)^2 \begin{pmatrix}
t_\ast \\ {\bf x}_\ast
\end{pmatrix} + C_\ast(k I+ \Lambda^{\rm ext}_\alpha ) \Lambda^{\rm ext}_\alpha \begin{pmatrix}
t_\ast \\ {\bf x}_\ast
\end{pmatrix} \quad \text{(from (\ref{const-horizon-0-Cast}))} \\ 
&= k C_\ast \Lambda^{\rm ext}_\alpha \begin{pmatrix}
t_\ast \\ {\bf x}_\ast
\end{pmatrix},
\end{align*}
which shows that the matrix $A_g^{\rm ext}$ admits an eigenvector ${\bf v}^{\rm ext}_{\ast,\alpha}$ with associated eigenvalue $kC_\ast$.
In other words,
\begin{equation}
\label{vector-evkC}
A_g^{\rm ext} {\bf v}^{\rm ext}_{\ast,\alpha} = kC_\ast {\bf v}^{\rm ext}_{\ast,\alpha}.
\end{equation}
From (\ref{Bg-1}) and (\ref{inner-gradp-v}), we have
\begin{align*}
B_g^{\rm ext} {\bf v}^{\rm ext}_{\ast,\alpha}  &= - P_\ast^{\rm ext} (A_g^{\rm ext} + C_\ast I){\bf v}^{\rm ext}_{\ast,\alpha}\\
	&= - (k+1)C_\ast P_\ast^{\rm ext} {\bf v}^{\rm ext}_{\ast,\alpha}\quad \text{(from (\ref{vector-evkC}))}\\
	 &= - (k+1)C_\ast {\bf v}^{\rm ext}_{\ast, \alpha}. \quad \text{(from (\ref{inner-gradp-v}))} 
\end{align*}
Therefore we have
\begin{align*}
Dg^{\rm ext}_\ast {\bf v}^{\rm ext}_{\ast,\alpha} = (A_g^{\rm ext} + B_g^{\rm ext}) {\bf v}^{\rm ext}_{\ast,\alpha} = \{ kC_\ast - (k+1)C_\ast\} {\bf v}^{\rm ext}_{\ast,\alpha} &= -C_\ast {\bf v}^{\rm ext}_{\ast,\alpha}
\end{align*}
and, as a consequence, the vector ${\bf v}^{\rm ext}_{\ast,\alpha}$ is an eigenvector of $Dg^{\rm ext}_\ast$ associated with $-C_\ast$ and the proof is completed.
\end{proof}

\begin{rem}
In the present case, all information involving the eigenpair $(-C_\ast, {\bf v}^{\rm ext}_{\ast, \alpha})$ are derived from the submatrix $(D_{\bf x} g)_\ast$.
Assuming $t_\ast$ as a parameter, the corresponding result follows from arguments in \cite{asym2} and Lemma \ref{lem-eigen-extend-1}. 
\end{rem}

This theorem and (\ref{inner-gradp-v}) imply that the eigenvector ${\bf v}^{\rm ext}_{\ast,\alpha}$ is transversal to the tangent space $T_{(t_\ast, {\bf x}_\ast)}\mathcal{E}$, in which sense the corresponding eigenpair is called {\em transversal eigenpair}.
Combined with the $Dg^{\rm ext}$-invariance of the tangent bundle $T\mathcal{E}$ (cf. Remark \ref{rem-invariance}), we conclude that the eigenvector ${\bf v}^{\rm ext}_{\ast,\alpha}$ provides the blow-up direction in the linear sense.
Comparing Propositions \ref{prop-ev1} and \ref{prop-ev-special-1}, the eigenpair $\{1, {\bf v}^{\rm ext}_{0,\alpha}\}$ of the blow-up power-determining matrix $A^{\rm ext}$ provides a characteristic information of blow-up solutions.
Similarly, from the eigenpair $\{ -C_\ast, {\bf v}^{\rm ext}_{\ast,\alpha}\}$, a direction of trajectories $(t(\tau), {\bf x}(\tau))$ for (\ref{desing-para-nonaut}) converging to $(t_\ast, {\bf x}_\ast)$ is uniquely determined.
From Theorem \ref{thm-balance-1to1}, the constant $C_\ast$ becomes positive in the present case.



\subsection{Eigenstructure in \lq\lq tangent spaces"}
Here we investigate eigenvectors of $Dg^{\rm ext}_\ast $ in the tangent space $T_{(t_\ast, {\bf x}_\ast)}\mathcal{E}$, which will be referred to as {\em tangent eigenvectors}, and the correspondence among those for $Dg^{\rm ext}_\ast $, $A_g^{\rm ext}$ and $A^{\rm ext}$.
This process is opposite to that in \cite{asym2}, because arguments in the former case are essentially the same as those in autonomous cases \cite{asym2}, whereas the remaining one requires qualitatively different treatments, shown in the next subsection.
\par
\bigskip
In the following argument, we assume Assumption \ref{ass-f-1}.
First note that Lemma \ref{lem-eigen-extend-1} indicates the following correspondence.
\begin{lem}
\label{lem-corr-aut-nonaut}
Under Assumption \ref{ass-f-1}, a pair $(\lambda, {\bf u}^{\rm ext})\in \mathbb{R}\times \mathbb{C}^{n+1}$ with $\lambda \not = 0$ is an eigenpair of $A^{\rm ext}$ if and only if the following statements hold:
\begin{itemize}
\item ${\bf u}^{\rm ext} = (0, {\bf u})\in \mathbb{C}^{1+n}$.
\item $(\lambda, {\bf u})\in \mathbb{R}\times \mathbb{C}^{n}$ is an eigenpair of $A$.
\end{itemize}

\end{lem}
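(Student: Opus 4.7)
The plan is to exploit the block lower-triangular structure of $A^{\rm ext}$ displayed in (\ref{Aext-sub}), namely
\begin{equation*}
A^{\rm ext} = \begin{pmatrix}
0 & {\bf 0}_n^T\\
D_t f_{\alpha, k}(t, {\bf Y}_0) & A
\end{pmatrix},
\end{equation*}
and carry out the eigenvalue equation componentwise. I would first write an arbitrary candidate eigenvector as ${\bf u}^{\rm ext} = (u_0, {\bf u})^T\in \mathbb{C}^{1+n}$ with $u_0\in \mathbb{C}$ and ${\bf u}\in \mathbb{C}^n$, and compute
\begin{equation*}
A^{\rm ext}{\bf u}^{\rm ext} = \begin{pmatrix}
0 \\ u_0 D_t f_{\alpha, k}(t, {\bf Y}_0) + A{\bf u}
\end{pmatrix}.
\end{equation*}
Here I use the fact that the $0$-th (i.e., $t$-)row of $A^{\rm ext}$ vanishes identically; this follows from $\alpha_0 = 0$ together with Remark \ref{rem-QH-0th}, which gives $f_{0;\alpha,k}\equiv 0$ so that $Df^{\rm ext}_{\alpha,k}$ has a zero top row.

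For the forward implication, assume $A^{\rm ext}{\bf u}^{\rm ext} = \lambda {\bf u}^{\rm ext}$ with $\lambda\neq 0$. The $0$-th component of the equality reads $0 = \lambda u_0$, which forces $u_0 = 0$ because $\lambda\neq 0$. Substituting $u_0 = 0$ into the remaining $n$ components yields $A{\bf u} = \lambda {\bf u}$, which is the desired eigenpair relation for $A$. Note that ${\bf u}\neq {\bf 0}_n$ automatically follows, since ${\bf u}^{\rm ext}\neq {\bf 0}_{n+1}$ and $u_0 = 0$.

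For the reverse implication, assume ${\bf u}^{\rm ext} = (0, {\bf u})^T$ with $A{\bf u} = \lambda {\bf u}$. Inserting $u_0 = 0$ into the display above immediately gives
\begin{equation*}
A^{\rm ext}{\bf u}^{\rm ext} = \begin{pmatrix} 0 \\ A{\bf u} \end{pmatrix} = \begin{pmatrix} 0 \\ \lambda {\bf u} \end{pmatrix} = \lambda {\bf u}^{\rm ext},
\end{equation*}
so $(\lambda, {\bf u}^{\rm ext})$ is an eigenpair of $A^{\rm ext}$.

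There is no real obstacle — the argument is pure linear algebra about block triangular matrices, and the invocation of Lemma \ref{lem-eigen-extend-1} from the appendix is essentially a formalization of this observation. The only point requiring a moment of care is the role of the hypothesis $\lambda\neq 0$: for $\lambda = 0$ the top row is automatically satisfied and eigenvectors with $u_0\neq 0$ (generalized $t$-contributions from $D_t f_{\alpha,k}(t,{\bf Y}_0)$) can appear, so the equivalence genuinely fails at $\lambda=0$. Assumption \ref{ass-f-1} is not used directly in this lemma; it enters only to guarantee the earlier identification $D\tilde f^{\rm ext}_\ast = (D\tilde f^{\rm ext}_{\alpha,k})_\ast$ that underlies the surrounding discussion in Section \ref{section-Dg}.
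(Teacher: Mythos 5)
Your proof is correct and is essentially the paper's own argument: the paper simply invokes Lemma \ref{lem-eigen-extend-1} from the appendix with $a=0$, and the block computation you carry out (the zero top row forcing $\lambda u_0 = 0$, hence $u_0=0$ when $\lambda\neq 0$, and the immediate converse) is exactly the content of that appendix lemma's proof. Your closing remarks on the role of $\lambda\neq 0$ and the inessential use of Assumption \ref{ass-f-1} are also accurate.
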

Using this expression, we can apply the same arguments in \cite{asym2} to characterizing the correspondence of eigenstructures among different matrices.
In particular, we obtain the following proposition.

\begin{prop}
\label{prop-corr-Dg-Ag}
For any $\lambda \in \mathbb{C}$ and $N\in \mathbb{N}$, we have
\begin{align}
\label{Dg-I-P-2-1}
(Dg^{\rm ext}_\ast - \lambda I)^N (I -P^{\rm ext}_\ast) 
	&= (I -P^{\rm ext}_\ast) (A^{\rm ext}_g - \lambda I)^N
\end{align}
and
\begin{align}
\label{Dg-I-P-2-2}
(A_g^{\rm ext} - k C_\ast I) ( Dg^{\rm ext}_\ast - \lambda I)^N (I-P^{\rm ext}_\ast) 
	&= (A_g^{\rm ext} - \lambda I)^N (A_g^{\rm ext} - k C_\ast I).
\end{align}
In particular, $A_g^{\rm res}$ provides no contribution to tangential eigenpairs.
\end{prop}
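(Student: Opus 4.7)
The plan is to exploit the decomposition
\begin{equation*}
Dg^{\rm ext}_\ast = (I-P^{\rm ext}_\ast)A^{\rm ext}_g - C_\ast P^{\rm ext}_\ast + \delta_{1,k} A_g^{\rm res}
\end{equation*}
from (\ref{dec-Dg}), and extract two key algebraic facts before doing anything else. First, since ${\bf v}^{\rm ext}_{\ast,\alpha}$ spans the range of $P^{\rm ext}_\ast$ (Proposition \ref{prop-proj-B}) and is an eigenvector of $A^{\rm ext}_g$ with eigenvalue $kC_\ast$ (derived in the proof of Proposition \ref{prop-ev-special-1}), it follows at once that $A^{\rm ext}_g P^{\rm ext}_\ast = kC_\ast P^{\rm ext}_\ast$. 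Second, using the block form of $A_g^{\rm res}$ and $P^{\rm ext}_\ast$ together with the normalization $(D_{\bf x} p_\alpha)_\ast^T {\bf v}_{\ast,\alpha} = 1$ from (\ref{inner-gradp-v}), one checks that $A_g^{\rm res}(I-P^{\rm ext}_\ast) = 0$, which already establishes the final sentence of the proposition.

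Next I would verify the $N=1$ case of (\ref{Dg-I-P-2-1}) by direct computation. Multiplying the decomposition on the right by $I-P^{\rm ext}_\ast$ and using $(P^{\rm ext}_\ast)^2 = P^{\rm ext}_\ast$ together with the annihilation $A_g^{\rm res}(I-P^{\rm ext}_\ast)=0$ collapses everything to $(I-P^{\rm ext}_\ast)A^{\rm ext}_g(I-P^{\rm ext}_\ast) - \lambda(I-P^{\rm ext}_\ast)$. Finally, the identity $A^{\rm ext}_g P^{\rm ext}_\ast = kC_\ast P^{\rm ext}_\ast$ gives $(I-P^{\rm ext}_\ast)A^{\rm ext}_g P^{\rm ext}_\ast = kC_\ast(P^{\rm ext}_\ast - (P^{\rm ext}_\ast)^2) = 0$, so one may drop the inner $(I-P^{\rm ext}_\ast)$ to obtain $(I-P^{\rm ext}_\ast)(A^{\rm ext}_g - \lambda I)$, as desired.

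The general $N$ follows by a one-line induction: $(Dg^{\rm ext}_\ast-\lambda I)^N(I-P^{\rm ext}_\ast) = (Dg^{\rm ext}_\ast-\lambda I)(I-P^{\rm ext}_\ast)(A^{\rm ext}_g-\lambda I)^{N-1}$ by the induction hypothesis, and then apply the $N=1$ identity once more. For (\ref{Dg-I-P-2-2}), I would first apply (\ref{Dg-I-P-2-1}) to rewrite the left-hand side as $(A_g^{\rm ext}-kC_\ast I)(I-P^{\rm ext}_\ast)(A^{\rm ext}_g-\lambda I)^N$. The relation $A^{\rm ext}_g P^{\rm ext}_\ast = kC_\ast P^{\rm ext}_\ast$ is equivalent to $(A^{\rm ext}_g-kC_\ast I)P^{\rm ext}_\ast = 0$, so $(A^{\rm ext}_g-kC_\ast I)(I-P^{\rm ext}_\ast) = A^{\rm ext}_g-kC_\ast I$, and the result is immediate from commutativity of polynomials in $A^{\rm ext}_g$.

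I expect the only subtle step to be the $k=1$ case, where one must confirm that $A_g^{\rm res}(I-P^{\rm ext}_\ast) = 0$; this is the single place where the specific shape of $A_g^{\rm res}$ and the normalization (\ref{inner-gradp-v}) intervene, and everything else is an essentially formal manipulation once the two annihilation identities $A_g^{\rm res}(I-P^{\rm ext}_\ast) = 0$ and $(A^{\rm ext}_g-kC_\ast I)P^{\rm ext}_\ast = 0$ are in hand. In particular, the presence of the nonautonomous $t$-coordinate causes no trouble because both $P^{\rm ext}_\ast$ and $A_g^{\rm res}$ have vanishing first row, so the extra dimension is passively carried through.
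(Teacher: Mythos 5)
Your proposal is correct and follows essentially the same route as the paper: the decomposition (\ref{dec-Dg}), the annihilation identities $(A_g^{\rm ext}-kC_\ast I)P_\ast^{\rm ext}=0$ (from $A_g^{\rm ext}{\bf v}^{\rm ext}_{\ast,\alpha}=kC_\ast{\bf v}^{\rm ext}_{\ast,\alpha}$) and $A_g^{\rm res}(I-P_\ast^{\rm ext})=0$ (via $(D_{\bf x}p_\alpha)_\ast^T P_\ast=(D_{\bf x}p_\alpha)_\ast^T$), followed by induction on $N$ and commutativity of polynomials in $A_g^{\rm ext}$ for (\ref{Dg-I-P-2-2}). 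The only cosmetic difference is that the paper computes $(I-P_\ast^{\rm ext})A_g^{\rm res}(I-P_\ast^{\rm ext})=0$ rather than noting the one-sided vanishing directly.
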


\begin{proof}
We must pay attention to the contribution of $A_g^{\rm res}$ to the influence on $Dg^{\rm ext}_\ast$: 
\begin{align*}
Dg^{\rm ext}_\ast (I-P^{\rm ext}_\ast) &= (I-P^{\rm ext}_\ast) A^{\rm ext}_g  (I-P^{\rm ext}_\ast) + (I-P^{\rm ext}_\ast)A_g^{\rm res} (I-P^{\rm ext}_\ast) -C_\ast P^{\rm ext}_\ast (I-P^{\rm ext}_\ast) \\
	&= (I-P^{\rm ext}_\ast) A^{\rm ext}_g  (I-P^{\rm ext}_\ast) + (I-P^{\rm ext}_\ast)A_g^{\rm res} (I-P^{\rm ext}_\ast).
\end{align*}
Now we have 
\begin{equation*}
(I-P^{\rm ext}_\ast) A^{\rm ext}_g  P_\ast^{\rm ext} = 0
\end{equation*}
from (\ref{vector-evkC}); $A^{\rm ext}_g {\bf v}^{\rm ext}_{\ast, \alpha} = kC_\ast {\bf v}^{\rm ext}_{\ast,\alpha}$, 
namely
\begin{align*}
Dg^{\rm ext}_\ast (I-P^{\rm ext}_\ast) 
	&= (I-P^{\rm ext}_\ast) A^{\rm ext}_g + (I-P^{\rm ext}_\ast)A_g^{\rm res}(I-P^{\rm ext}_\ast).
\end{align*}
On the other hand, the remaining matrices have the following forms:
\begin{align*}
(I-P^{\rm ext}_\ast)A_g^{\rm res} &= \begin{pmatrix}
1 & {\bf 0}_n^T \\
{\bf 0}_n & I_n -P_\ast
\end{pmatrix}\begin{pmatrix}
0 & -2c (D_{\bf x} p_\alpha)_\ast^T \\
{\bf 0}_n & O
\end{pmatrix} = \begin{pmatrix}
0 & -2c (D_{\bf x} p_\alpha)_\ast^T \\
{\bf 0}_n & O
\end{pmatrix},\\
(I-P^{\rm ext}_\ast)A_g^{\rm res}P^{\rm ext}_\ast &= \begin{pmatrix}
1 & {\bf 0}_n^T \\
{\bf 0}_n & I_n -P_\ast
\end{pmatrix}\begin{pmatrix}
0 & -2c (D_{\bf x} p_\alpha)_\ast^T \\
{\bf 0}_n & O
\end{pmatrix} \begin{pmatrix}
0 & {\bf 0}_n^T \\
{\bf 0}_n & P_\ast
\end{pmatrix} = \begin{pmatrix}
0 & -2c (D_{\bf x} p_\alpha)_\ast^T P_\ast  \\
{\bf 0}_n & O
\end{pmatrix},\quad 
\end{align*}
and hence
\begin{equation*}
(I-P^{\rm ext}_\ast)A_g^{\rm res}(I-P^{\rm ext}_\ast) = \begin{pmatrix}
0 & -2c (D_{\bf x} p_\alpha)_\ast^T (I_n - P_\ast ) \\
{\bf 0}_n & O
\end{pmatrix}.
\end{equation*}
Moreover, we have
\begin{align*}
P_\ast &= {\bf v}_{\ast, \alpha} \frac{1}{c}\left( \beta_1 x_{\ast, 1}^{2\beta_1-1}, \ldots, \beta_n x_{\ast, n}^{2\beta_n-n}\right),\\
2c (D_{\bf x} p_\alpha)_\ast^T (I_n - P_\ast) &= 2c (D_{\bf x} p_\alpha)_\ast^T  - 2c (D_{\bf x} p_\alpha)_\ast^T P_\ast \\
	&= 2c (D_{\bf x} p_\alpha)_\ast^T - 2\left( \beta_1 x_{\ast, 1}^{2\beta_1-1}, \ldots, \beta_n x_{\ast, n}^{2\beta_n-n}\right) = {\bf 0}_n^T.
\end{align*}
As a summary, 
\begin{align*}
(Dg^{\rm ext}_\ast - \lambda I) (I-P^{\rm ext}_\ast) 
	&= (I-P^{\rm ext}_\ast) (A^{\rm ext}_g - \lambda I) + (I-P^{\rm ext}_\ast)A_g^{\rm res}(I-P^{\rm ext}_\ast)\\
	&= (I-P^{\rm ext}_\ast) (A^{\rm ext}_g - \lambda I).
\end{align*}
Repeating this procedure, we obtain (\ref{Dg-I-P-2-1}).
(\ref{Dg-I-P-2-2}) is also obtained in a similar way by noting that
\begin{equation*}
(A^{\rm ext}_g - kC_\ast I) P_\ast^{\rm ext} = 0,
\end{equation*}
as observed in the proof of Proposition \ref{prop-ev-special-1}.
\end{proof}
This proposition indicates that {\em there is no difference to determine eigenstructure on the tangent space $T_{(t_\ast, {\bf x}_\ast)}\mathcal{E}$ between $k>1$ and $k=1$}, in other words, regardless of the presence of $A_g^{\rm res}$.
One consequence is the following characterization of \lq\lq tangential" eigenvectors stated below.
The proof is skipped because it is the natural extension to the autonomous case (\cite{asym2}, Theorem 3.17).

\begin{prop}[cf. \cite{asym2}, Theorem 3.17]
\label{prop-evec-Dg}
Let $(t_\ast, {\bf x}_\ast)\in \mathcal{E}$ be an equilibrium on the horizon for $g^{\rm ext}$ and suppose that Assumption \ref{ass-f-1} holds.
\begin{enumerate}
\item Assume that $\lambda \in {\rm Spec}(A_g^{\rm ext})$ and let ${\bf w}\in \mathbb{C}^n$ be such that
${\bf w}\in \ker((A_g^{\rm ext} - \lambda I)^{m_\lambda})\setminus \ker((A_g^{\rm ext} - \lambda I)^{m_\lambda -1})$ with $(I-P_\ast^{\rm ext}){\bf w}\not = 0$ for some $m_\lambda \in \mathbb{N}$. 
\begin{itemize}
\item If $\lambda \not = k C_\ast$, then $(I -P_\ast^{\rm ext}){\bf w} \in \ker((Dg^{\rm ext}_\ast - \lambda I)^{m_\lambda})\setminus \ker((Dg^{\rm ext}_\ast - \lambda I)^{m_\lambda -1})$.
\item If $\lambda = k C_\ast$, then either of the following holds:
\begin{itemize}
\item $(I-P_\ast^{\rm ext}){\bf w}\in \ker((Dg^{\rm ext}_\ast - k C_\ast I)^{m_\lambda})\setminus \ker((Dg^{\rm ext}_\ast - k C_\ast I)^{m_\lambda -1})$,
\item $(I - P_\ast^{\rm ext}){\bf w}\in \ker((Dg^{\rm ext}_\ast - k C_\ast I)^{m_\lambda-1})\setminus \ker((Dg^{\rm ext}_\ast - k C_\ast I)^{m_\lambda -2})$.
\end{itemize}
\end{itemize}
\item 
Conversely, assume that $\lambda_g \in {\rm Spec}(Dg^{\rm ext}_\ast )$ and let ${\bf w}_g\in \mathbb{C}^n$ be such that
$(I - P_\ast^{\rm ext}){\bf w}_g\in \ker((Dg^{\rm ext}_\ast - \lambda_g I)^{m_{\lambda_g}})\setminus \ker((Dg^{\rm ext}_\ast - \lambda_g I)^{m_{\lambda_g} -1})$ with $(I-P_\ast^{\rm ext}){\bf w}_g\not = 0$ for some $m_{\lambda_g} \in \mathbb{N}$. 
\begin{itemize}
\item If $\lambda_g \not = kC_\ast$, then $(A_g^{\rm ext} - kC_\ast I){\bf w}_g \in \ker((A_g^{\rm ext} - \lambda_g I)^{m_{\lambda_g}})\setminus \ker((A_g^{\rm ext} - \lambda_g I)^{m_{\lambda_g} -1})$.
\item If $\lambda_g = kC_\ast$, then either of the following holds:
\begin{itemize}
\item $(A_g^{\rm ext} - kC_\ast I){\bf w}_g\in \ker((A_g^{\rm ext} - kC_\ast I)^{m_{\lambda_g}})\setminus \ker((A_g^{\rm ext} - kC_\ast I)^{m_{\lambda_g} -1})$,
\item $(A_g^{\rm ext} - kC_\ast I){\bf w}_g\in \ker((A_g^{\rm ext} - kC_\ast I)^{m_{\lambda_g}+1})\setminus \ker((A_g^{\rm ext} - kC_\ast I)^{m_{\lambda_g}})$.
\end{itemize}
\end{itemize}
\end{enumerate}
\end{prop}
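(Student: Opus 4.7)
The plan is to derive both directions of the correspondence purely by applying the two intertwining identities (\ref{Dg-I-P-2-1}) and (\ref{Dg-I-P-2-2}) of Proposition \ref{prop-corr-Dg-Ag} to Jordan chains of the respective matrices. The central algebraic fact driving every case split is that $\ker(I - P^{\rm ext}_\ast) = {\rm span}\{{\bf v}^{\rm ext}_{\ast,\alpha}\}$, together with the fact that ${\bf v}^{\rm ext}_{\ast,\alpha}$ is an eigenvector of $A^{\rm ext}_g$ with eigenvalue $kC_\ast$ by (\ref{vector-evkC}). Up to the trivial zero-$t$-component embedding of $\mathbb{C}^n$ into $\mathbb{C}^{n+1}$ supplied by Lemma \ref{lem-corr-aut-nonaut}, the analysis parallels the autonomous calculation in \cite{asym2}.

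For the forward direction (1), I would apply (\ref{Dg-I-P-2-1}) to ${\bf w}$. Taking $N = m_\lambda$ gives $(Dg^{\rm ext}_\ast - \lambda I)^{m_\lambda}(I - P^{\rm ext}_\ast){\bf w} = (I - P^{\rm ext}_\ast)(A^{\rm ext}_g - \lambda I)^{m_\lambda}{\bf w} = 0$, so $(I - P^{\rm ext}_\ast){\bf w}$ lies in $\ker((Dg^{\rm ext}_\ast - \lambda I)^{m_\lambda})$. To settle whether the exponent drops, set $N = m_\lambda - 1$; the right-hand side is $(I - P^{\rm ext}_\ast){\bf z}$ with ${\bf z} := (A^{\rm ext}_g - \lambda I)^{m_\lambda - 1}{\bf w} \neq 0$, and this vanishes precisely when ${\bf z} \in {\rm span}\{{\bf v}^{\rm ext}_{\ast,\alpha}\}$. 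When $\lambda \neq kC_\ast$, such coincidence is forbidden: if ${\bf z} = c\,{\bf v}^{\rm ext}_{\ast,\alpha}$ with $c \neq 0$, applying one more factor of $(A^{\rm ext}_g - \lambda I)$ would yield $c(kC_\ast - \lambda){\bf v}^{\rm ext}_{\ast,\alpha} \neq 0$, contradicting ${\bf w} \in \ker((A^{\rm ext}_g - \lambda I)^{m_\lambda})$. When $\lambda = kC_\ast$, the span of ${\bf v}^{\rm ext}_{\ast,\alpha}$ sits in $\ker(A^{\rm ext}_g - \lambda I)$, so the above compression is genuinely possible, which is exactly the dichotomy between exponents $m_\lambda$ and $m_\lambda - 1$ in the statement. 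In the latter subcase, minimality of the projected index follows because a further drop would contradict the minimality of $m_\lambda$ for ${\bf w}$ itself.

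The reverse direction (2) proceeds symmetrically using (\ref{Dg-I-P-2-2}). Applying that identity to ${\bf w}_g$ with $N = m_{\lambda_g}$ annihilates the left-hand side via $(I - P^{\rm ext}_\ast){\bf w}_g \in \ker((Dg^{\rm ext}_\ast - \lambda_g I)^{m_{\lambda_g}})$, yielding $(A^{\rm ext}_g - \lambda_g I)^{m_{\lambda_g}}(A^{\rm ext}_g - kC_\ast I){\bf w}_g = 0$, so $(A^{\rm ext}_g - kC_\ast I){\bf w}_g$ lies in $\ker((A^{\rm ext}_g - \lambda_g I)^{m_{\lambda_g}})$. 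Setting $N = m_{\lambda_g} - 1$ and running the analogous contradiction argument shows that the exponent can only change at resonance $\lambda_g = kC_\ast$; there the chain either remains at $m_{\lambda_g}$ or extends by one step to $m_{\lambda_g} + 1$, matching the stated alternatives. The structural asymmetry between the two parts is that the projection $I - P^{\rm ext}_\ast$ on the right of (\ref{Dg-I-P-2-1}) forces the ambiguity to shift \emph{downward}, while multiplication by $A^{\rm ext}_g - kC_\ast I$ on the right of (\ref{Dg-I-P-2-2}) forces it to shift \emph{upward}.

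The main obstacle is precisely the resonant case $\lambda = kC_\ast$: there $\ker(I - P^{\rm ext}_\ast) \subset \ker(A^{\rm ext}_g - kC_\ast I)$, so the intertwining maps can compress or expand a Jordan chain by exactly one step, and one must verify through minimality tracking in both directions that no further collapse occurs. Once this resonance bookkeeping is handled, the proposition is a direct consequence of Proposition \ref{prop-corr-Dg-Ag}, with the nonautonomous $t$-component handled transparently through Lemma \ref{lem-corr-aut-nonaut}.
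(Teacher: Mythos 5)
The paper itself gives no proof of this proposition (it is explicitly ``skipped because it is the natural extension to the autonomous case''), and your route --- running Jordan chains through the two intertwining identities (\ref{Dg-I-P-2-1}) and (\ref{Dg-I-P-2-2}) of Proposition \ref{prop-corr-Dg-Ag}, with the case split governed by $\ker(I-P^{\rm ext}_\ast)={\rm span}\{{\bf v}^{\rm ext}_{\ast,\alpha}\}$ and $A_g^{\rm ext}{\bf v}^{\rm ext}_{\ast,\alpha}=kC_\ast{\bf v}^{\rm ext}_{\ast,\alpha}$ --- is exactly the argument those identities were set up to enable. Your treatment of part (1) is complete and correct, including the lower bound $m_\lambda-1$ in the resonant subcase.

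Part (2) has a genuine gap in the resonant case $\lambda_g=kC_\ast$. Your claim that the chain ``extends by one step to $m_{\lambda_g}+1$'' is the opposite of what (\ref{Dg-I-P-2-2}) actually yields: applying it with $N=m_{\lambda_g}$ gives $(A_g^{\rm ext}-kC_\ast I)^{m_{\lambda_g}+1}{\bf w}_g=0$, so the index of $(A_g^{\rm ext}-kC_\ast I){\bf w}_g$ is capped at $m_{\lambda_g}$ and the second stated alternative can never be produced by your mechanism. The direction that actually needs work is the lower bound: with $N=m_{\lambda_g}-1$ one gets $(A_g^{\rm ext}-kC_\ast I)^{m_{\lambda_g}}{\bf w}_g=(A_g^{\rm ext}-kC_\ast I){\bf u}$ for the nonzero vector ${\bf u}=(Dg^{\rm ext}_\ast-kC_\ast I)^{m_{\lambda_g}-1}(I-P^{\rm ext}_\ast){\bf w}_g\in T_{(t_\ast,{\bf x}_\ast)}\mathcal{E}$, and you must rule out ${\bf u}\in\ker(A_g^{\rm ext}-kC_\ast I)$; this needs the extra input $\ker(A_g^{\rm ext}-kC_\ast I)\cap T_{(t_\ast,{\bf x}_\ast)}\mathcal{E}=\{0\}$ (automatic when $kC_\ast$ is a simple eigenvalue of $A_g^{\rm ext}$, since then the kernel is ${\rm span}\{{\bf v}^{\rm ext}_{\ast,\alpha}\}$, which is transversal to $T_{(t_\ast,{\bf x}_\ast)}\mathcal{E}$), and it is not supplied by ``analogous contradiction.'' Relatedly, even in the non-resonant case of part (2) the contradiction is not literally the same as in part (1): there you must show $(A_g^{\rm ext}-kC_\ast I){\bf u}\neq 0$, and the way to do it is to note that $Dg^{\rm ext}_\ast(I-P^{\rm ext}_\ast)=(I-P^{\rm ext}_\ast)A_g^{\rm ext}$ forces any nonzero ${\bf u}\in T_{(t_\ast,{\bf x}_\ast)}\mathcal{E}$ annihilated by $A_g^{\rm ext}-kC_\ast I$ to satisfy $Dg^{\rm ext}_\ast{\bf u}=kC_\ast{\bf u}$, contradicting $Dg^{\rm ext}_\ast{\bf u}=\lambda_g{\bf u}$ with $\lambda_g\neq kC_\ast$. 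Spell these two steps out and the proof is complete (modulo the vacuity of the second resonant alternative, which is an artifact of the statement rather than of your argument).
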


Next, consider the correspondence of eigenstructures between $A^{\rm ext}_g$ and $A^{\rm ext}$.
In particular, we have the following correspondence.

\begin{prop}[cf. \cite{asym2}, Proposition 3.19]
\label{prop-correspondence-ev-Ag-A-2}
Let $(t_\ast, {\bf x}_\ast)^T\in \mathcal{E}$ be an equilibrium on the horizon for $g^{\rm ext}$.
Also, let $\lambda \in {\rm Spec}(A^{\rm ext}_g)$ and ${\bf u}\in \ker((A_g^{\rm ext} - \lambda I)^N)\setminus \ker((A_g^{\rm ext} - \lambda I)^{N-1})$ for some $N\in \mathbb{Z}_{\geq 1}$, where ${\bf u}\in \mathbb{R}^{n+1}$ is linearly independent of ${\bf v}^{\rm ext}_{\ast, \alpha}$.
If
\begin{equation*}
\tilde \lambda:= r_{{\bf x}_\ast}^k \lambda,\quad {\bf U} := r_{{\bf x}_\ast}^{\Lambda_\alpha^{\rm ext}}{\bf u},
\end{equation*}
namely
\begin{equation*}
{\bf U} = (U_0, U_1,\ldots, U_n)^T,\quad U_i := r_{{\bf x}_\ast}^{\alpha_i}u_i,
\end{equation*}
then $\tilde \lambda \in {\rm Spec}(A^{\rm ext})$ and ${\bf U} \in \ker((A^{\rm ext} - \tilde \lambda I)^N)\setminus \ker((A^{\rm ext} - \tilde \lambda I)^{N-1})$.
Conversely, if $\tilde \lambda \in {\rm Spec}(A^{\rm ext})$ and ${\bf U}\in \ker((A^{\rm ext} - \lambda I)^N)\setminus \ker((A^{\rm ext} - \lambda I)^{N-1})$ for some $N\in \mathbb{Z}_{\geq 1}$,
then the pair $\{\lambda, {\bf u}\}$ defined by
\begin{equation*}
\lambda:= r_{{\bf Y}_0}^{-k} \tilde \lambda,\quad {\bf u} := r_{{\bf Y}_0}^{- \Lambda_\alpha^{\rm ext}}{\bf U}
\end{equation*}
satisfy $\lambda \in {\rm Spec}(A_g^{\rm ext})$ and ${\bf u} \in \ker((A_g^{\rm ext} - \lambda I)^N)\setminus \ker((A_g^{\rm ext} - \lambda I)^{N-1})$.
\end{prop}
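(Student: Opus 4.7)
The plan is to exhibit a single algebraic identity of the form
\begin{equation*}
A^{\rm ext} = r^k \cdot r^{\Lambda_\alpha^{\rm ext}}\, A_g^{\rm ext}\, r^{-\Lambda_\alpha^{\rm ext}},\qquad r \equiv r_{{\bf x}_\ast}=r_{{\bf Y}_0},
\end{equation*}
from which the generalized-eigenpair correspondence will follow immediately. The three ingredients I will assemble are: the replacement $(D\tilde f^{\rm ext}_{\alpha,k})_\ast = Df^{\rm ext}_{\alpha,k}(t_\ast,{\bf x}_\ast)$ on the horizon which, combined with Lemma \ref{lem-ass-f-1}, lets me write $A_g^{\rm ext} = -C_\ast \Lambda_\alpha^{\rm ext} + Df^{\rm ext}_{\alpha,k}(t_\ast,{\bf x}_\ast)$; the correspondence ${\bf Y}_0 = r^{\Lambda_\alpha}{\bf x}_\ast$ of Theorem \ref{thm-balance-1to1}; and the constant identity $r^k C_\ast = 1/k$ of (\ref{balance-C01}). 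Since $p_\alpha$ is quasi-homogeneous of order one, $p_\alpha({\bf Y}_0) = r\, p_\alpha({\bf x}_\ast) = r$, confirming $r_{{\bf Y}_0} = r_{{\bf x}_\ast}$ so that a single symbol $r$ suffices for both halves of the proposition.

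The core computation is to differentiate the quasi-homogeneity $f^{\rm ext}_{\alpha,k}(t, s^{\Lambda_\alpha^{\rm ext}}{\bf y}) = s^{kI+\Lambda_\alpha^{\rm ext}}\, f^{\rm ext}_{\alpha,k}(t,{\bf y})$ componentwise with respect to $(t,{\bf y})$ to obtain the matrix scaling
\begin{equation*}
Df^{\rm ext}_{\alpha,k}(t, s^{\Lambda_\alpha^{\rm ext}}{\bf y}) = s^{kI + \Lambda_\alpha^{\rm ext}}\, Df^{\rm ext}_{\alpha,k}(t,{\bf y})\, s^{-\Lambda_\alpha^{\rm ext}}.
\end{equation*}
Here the conventions $\alpha_0 = 0$ and $f_{0;\alpha,k}\equiv 0$ (Remark \ref{rem-QH-0th}) make the identity valid on the zeroth row (both sides vanish) and on the zeroth column (the $t$-derivative, whose factor is $s^{k+\alpha_i-\alpha_0} = s^{k+\alpha_i}$). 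Setting $s=r$ and ${\bf y}={\bf x}_\ast$ and rearranging give $r^{\Lambda_\alpha^{\rm ext}}\,Df^{\rm ext}_{\alpha,k}(t_\ast,{\bf x}_\ast)\,r^{-\Lambda_\alpha^{\rm ext}} = r^{-k}\,Df^{\rm ext}_{\alpha,k}(t_\ast,{\bf Y}_0)$. Conjugating $A_g^{\rm ext}$ by $r^{\Lambda_\alpha^{\rm ext}}$, using that diagonal matrices commute with $\Lambda_\alpha^{\rm ext}$, and absorbing $r^kC_\ast = 1/k$ yields the displayed conjugacy.

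From here the proposition is immediate. Iterating $(A^{\rm ext} - \tilde\lambda I) = r^k\, r^{\Lambda_\alpha^{\rm ext}} (A_g^{\rm ext} - \lambda I)\, r^{-\Lambda_\alpha^{\rm ext}}$ gives $(A^{\rm ext} - \tilde\lambda I)^N = r^{kN}\, r^{\Lambda_\alpha^{\rm ext}} (A_g^{\rm ext} - \lambda I)^N r^{-\Lambda_\alpha^{\rm ext}}$, and since $r^{\Lambda_\alpha^{\rm ext}}$ is an invertible diagonal map, it preserves the kernel filtration. Therefore ${\bf u}\in \ker((A_g^{\rm ext}-\lambda I)^N)\setminus \ker((A_g^{\rm ext}-\lambda I)^{N-1})$ if and only if ${\bf U}=r^{\Lambda_\alpha^{\rm ext}}{\bf u}\in \ker((A^{\rm ext}-\tilde\lambda I)^N)\setminus \ker((A^{\rm ext}-\tilde\lambda I)^{N-1})$, giving both directions of the correspondence simultaneously. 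The converse substitution $\lambda = r_{{\bf Y}_0}^{-k}\tilde\lambda$, ${\bf u} = r_{{\bf Y}_0}^{-\Lambda_\alpha^{\rm ext}}{\bf U}$ is literally the inverse of the forward substitution because $r_{{\bf Y}_0}=r_{{\bf x}_\ast}$.

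The main obstacle will be bookkeeping around the extended coordinate $t$: I need to verify that the matrix scaling law remains consistent on the zeroth row and column, which is a subtlety absent in the autonomous treatment of \cite{asym2}, and to keep track of the agreement $r_{{\bf Y}_0}=r_{{\bf x}_\ast}$ so that forward and backward statements are symmetric. The hypothesis that ${\bf u}$ be linearly independent of ${\bf v}^{\rm ext}_{\ast,\alpha}$ is not used by the conjugacy argument itself; its role is to exclude the transversal eigenpair, which under $(\lambda,{\bf u})\mapsto(r^k\lambda, r^{\Lambda_\alpha^{\rm ext}}{\bf u})$ sends the pair $(kC_\ast,{\bf v}^{\rm ext}_{\ast,\alpha})$ of $A_g^{\rm ext}$ (see (\ref{vector-evkC})) precisely to the pair $(1,{\bf v}^{\rm ext}_{0,\alpha})$ of $A^{\rm ext}$ identified in Proposition \ref{prop-ev1}, consistently with Proposition \ref{prop-ev-special-1}.
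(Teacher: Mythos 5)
Your proposal is correct and follows essentially the same route as the paper: reduce to the quasi-homogeneous part via Assumption \ref{ass-f-1}, differentiate the quasi-homogeneity relation to get the scaling law for $Df^{\rm ext}_{\alpha,k}$, combine it with ${\bf Y}_0 = r_{{\bf x}_\ast}^{\Lambda_\alpha}{\bf x}_\ast$ and $r_{{\bf x}_\ast}^k C_\ast = 1/k$ to obtain the conjugacy $A^{\rm ext} = r^{kI+\Lambda_\alpha^{\rm ext}} A_g^{\rm ext}\, r^{-\Lambda_\alpha^{\rm ext}}$, and iterate to transport the kernel filtration. Your explicit checks of the zeroth row and column and of $r_{{\bf Y}_0}=r_{{\bf x}_\ast}$ match the paper's own remarks, so nothing is missing.
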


The proof is essentially the same as the autonomous case (\cite{asym2}).
\begin{proof}
Assumption \ref{ass-f-1} implies that it is sufficient to consider the case that $f^{\rm ext}(t, {\bf y})$, equivalently $\tilde f^{\rm ext}(t, {\bf x})$, is quasi-homogeneous,
namely $f^{\rm ext}(t, {\bf y}) = f^{\rm ext}_{\alpha, k}(t, {\bf y})$ and $\tilde f^{\rm ext}(t, {\bf x}) = \tilde f^{\rm ext}_{\alpha, k}(t, {\bf x})$.
In this framework, the $t$-component of $f$ is identically $0$ because it does not contain the quasi-homogeneous component:
\begin{equation*}
f^{\rm ext}(t, {\bf y}) = (0, f_1(t, {\bf y}), \ldots, f_n(t, {\bf y}))^T = (0, f_{1;\alpha, k}(t, {\bf y}), \ldots, f_{n;\alpha, k}(t, {\bf y}))^T,
\end{equation*}
etc. 
See Remark \ref{rem-QH-0th}.
The above setting is assumed in the following arguments.
Recall that an equilibrium on the horizon $(t_\ast, {\bf x}_\ast)$ with $C_\ast > 0$ and the corresponding root ${\bf Y}_0$ of the balance law satisfy
\begin{align}
\label{identity-balance-equilibrium}
&{\bf x}_\ast = r_{{\bf Y}_0}^{-\Lambda_\alpha} {\bf Y}_0,\quad {\bf Y}_0 = r_{{\bf x}_\ast}^{\Lambda_\alpha}{\bf x}_\ast,\\
\notag
&r_{{\bf Y}_0} = p_\alpha({\bf Y}_0) = r_{{\bf x}_\ast} \equiv (kC_\ast)^{-1/k} > 0.
\end{align}
Similar to arguments in Lemma \ref{lem-identity-QHvf}, we have
\begin{equation}
\label{identity-QH-diff}
s^{\alpha_l}\frac{\partial f_i}{\partial x_l}( t, s^{\Lambda_\alpha}{\bf x} ) = s^{k + \alpha_i}\frac{\partial f_i}{\partial x_l}( t, {\bf x} ), \quad l=0,1,\ldots, n
\end{equation}
with the identification $x_0 = t$, while the left-hand side coincides with
\begin{equation*}
s^{\alpha_l}\frac{\partial f_i}{\partial x_l}( t, s^{\Lambda_\alpha}{\bf x} ) = \frac{\partial f_i}{\partial (s^{\alpha_l} x_l)}( t, s^{\Lambda_\alpha}{\bf x} )\frac{\partial (s^{\alpha_l} x_l)}{\partial x_l} \equiv \frac{\partial f_i}{\partial X_l}( {\bf X} )\frac{\partial (s^{\alpha_l} x_l)}{\partial x_l},
\end{equation*}
introducing an auxiliary variable ${\bf X} = (X_0, X_1, \ldots, X_n)^T$, $X_0 := t$, $X_i := s^{\alpha_i} x_i$ for $i=1,\ldots$, with some $s > 0$.
Remark that the above identity still holds with $l=0$ because $\alpha_l = 0$.
Let $D_{\bf X}$ be the derivative with respect to the vector variable ${\bf X}$.
Note that $D_{\bf X} \tilde f^{\rm ext} ({\bf X})|_{{\bf X} = (t, \bar {\bf x})} = D_{(t,{\bf x})} \tilde f^{\rm ext} (t, \bar {\bf x})$ when the variable ${\bf X}$ is set as $(t, {\bf x})$ and that $D_{\bf X} f({\bf X})|_{{\bf X} = (t, \bar {\bf Y})} = D_{\bf Y} f(t, \bar {\bf Y})$ when the variable ${\bf X}$ is set as $(t, {\bf Y})$.
Using the fact that $f^{\rm ext} (t, {\bf Y})$ and $\tilde f^{\rm ext} (t, {\bf x})$ have the identical form (as far as the quasi-homogeneous component is considered), we have
\begin{align*}
D_{(t,{\bf Y})} f^{\rm ext} (t_\ast, {\bf Y}_0) r_{{\bf x}_\ast}^{\Lambda_\alpha^{\rm ext}} = r_{{\bf x}_\ast}^{kI + \Lambda_\alpha^{\rm ext}} D_{(t,{\bf x})} f(t_\ast, {\bf x}_\ast)
\end{align*}
with $s = r_{{\bf x}_\ast}$ and $(t, {\bf x}) = (t_\ast, {\bf x}_\ast)$ in (\ref{identity-QH-diff}) and the identity (\ref{identity-balance-equilibrium}).
That is,
\begin{equation}
\label{identity-QH-diff-2-matrix}
D_{(t,{\bf Y})} f^{\rm ext} (t_\ast, {\bf Y}_0) = r_{{\bf x}_\ast}^{kI + \Lambda_{\alpha}^{\rm ext}}  D_{(t,{\bf x})} \tilde f^{\rm ext}(t_\ast, {\bf x}_\ast) r_{{\bf x}_\ast}^{-\Lambda_{\alpha}^{\rm ext}}.
\end{equation}
Then we have
\begin{align*}
A^{\rm ext} &= -\frac{1}{k} \Lambda_\alpha^{\rm ext} + D_{(t,{\bf Y})} f^{\rm ext}(t_\ast, {\bf Y}_0)\quad \text{(from (\ref{blow-up-power-determining-matrix}))}\\
	&= -r_{{\bf x}_\ast}^k C_\ast \Lambda_\alpha^{\rm ext} + D_{(t,{\bf Y})} f^{\rm ext}(t_\ast, {\bf Y}_0)\quad \text{(from (\ref{identity-balance-equilibrium}))}\\
	&= -r_{{\bf x}_\ast}^k C_\ast \Lambda_\alpha^{\rm ext} + r_{{\bf x}_\ast}^{kI + \Lambda_{\alpha}^{\rm ext}} D_{(t,{\bf x})} \tilde f^{\rm ext}_\ast r_{{\bf x}_\ast}^{-\Lambda_{\alpha}^{\rm ext}}\quad \text{(from (\ref{identity-QH-diff-2-matrix}))}\\
	&= r_{{\bf x}_\ast}^{kI + \Lambda_{\alpha}^{\rm ext}} \left( -C_\ast \Lambda_\alpha^{\rm ext} +  D_{(t,{\bf x})} \tilde f^{\rm ext}_\ast \right)r_{{\bf x}_\ast}^{-\Lambda_{\alpha}^{\rm ext}}\\
	&= r_{{\bf x}_\ast}^{kI + \Lambda_{\alpha}^{\rm ext}} A_g^{\rm ext} r_{{\bf x}_\ast}^{-\Lambda_{\alpha}^{\rm ext}} \quad \text{(from (\ref{Ag-QH}))}
\end{align*}
and hence
\begin{equation}
\label{conj-A-Ag}
A^{\rm ext} = r_{{\bf x}_\ast}^{kI+\Lambda_{\alpha}^{\rm ext}} A_g^{\rm ext} r_{{\bf x}_\ast}^{-\Lambda_{\alpha}^{\rm ext}} \quad \Leftrightarrow \quad 
A_g^{\rm ext} = r_{{\bf Y}_0}^{-(kI + \Lambda_{\alpha}^{\rm ext})}  A^{\rm ext} r_{{\bf Y}_0}^{\Lambda_{\alpha}^{\rm ext}},
\end{equation}
where we have used $r_{{\bf Y}_0} = r_{{\bf x}_\ast}$.
In particular, for any $\lambda\in \mathbb{C}$ and $N\in \mathbb{N}$ with the identity $\tilde \lambda =  r_{{\bf x}_\ast}^k \lambda$, we have
\begin{align}
\notag
&(A^{\rm ext} - \tilde \lambda I)^N = r_{{\bf x}_\ast}^{kN I + \Lambda_{\alpha}^{\rm ext}} (A_g^{\rm ext} - \lambda I)^N r_{{\bf x}_\ast}^{-\Lambda_{\alpha}^{\rm ext}}\\
\label{conj-A-Ag-2}
&\quad \Leftrightarrow \quad
(A_g^{\rm ext} - \lambda I)^N = r_{{\bf Y}_0}^{-(kN I +\Lambda_{\alpha}^{\rm ext})} (A^{\rm ext} - \tilde \lambda I)^N r_{{\bf Y}_0}^{\Lambda_{\alpha}^{\rm ext}}.
\end{align}
This identity directly yields our statements.
For example, let ${\bf u} = (u_0, u_1,\ldots, u_n)$ be an eigenvector of $A_g^{\rm ext}$ associated with an eigenvalue $\lambda$:  
$A_g^{\rm ext} {\bf u} = \lambda {\bf u}$.
Then (\ref{conj-A-Ag}) yields
\begin{align*}
 \lambda {\bf u} &= A_g^{\rm ext} {\bf u}
 	= r_{{\bf Y}_0}^{-(k I + \Lambda_{\alpha}^{\rm ext})} A^{\rm ext} r_{{\bf Y}_0}^{\Lambda_{\alpha}^{\rm ext}} {\bf u}
 	= r_{{\bf Y}_0}^{-(k I + \Lambda_{\alpha}^{\rm ext})} A^{\rm ext} {\bf U},
\end{align*}
and hence
\begin{equation*}
A^{\rm ext}{\bf U} = r_{{\bf Y}_0}^{-k}\lambda {\bf U} = \tilde \lambda {\bf U}.
\end{equation*}
Repeating the same argument conversely assuming the eigenstructure $A^{\rm ext}{\bf U} = \tilde \lambda {\bf u}$, we know that an eigenpair $(\lambda, {\bf u})$ of $A_g^{\rm ext}$ is constructed from a given eigenpair $(\tilde \lambda, {\bf U})$ of $A$ through (\ref{identity-balance-equilibrium}) and (\ref{conj-A-Ag}).
Correspondence of generalized eigenvectors follows from the similar arguments through (\ref{conj-A-Ag-2}).
\end{proof}

Finally we discuss the additional eigenpair in the nonautonomous setting, reflecting the (linearized) time-evolution of $g^{\rm ext}$ in the $t$-direction.
\begin{prop}
\label{prop-tangential-t}
Under Assumption \ref{ass-f-1}, the matrices $A^{\rm ext}$ and $Dg^{\rm ext}_\ast$ admits the eigenpair
\begin{equation*}
\left\{0, \begin{pmatrix}
 1 \\  -A^{-1}D_t f_{\alpha, k}(t_\ast, {\bf Y}_0)
\end{pmatrix}\right\}\quad \text{ and }\quad \left\{0, \begin{pmatrix}
1 \\ -((D_{\bf x}g)_\ast)^{-1}(D_t g)_\ast
\end{pmatrix}\right\},
\end{equation*}
respectively.
\end{prop}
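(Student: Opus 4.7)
The plan is to verify each eigenpair by a direct block-matrix computation that exploits the special structure of the first row coming from the $t$-equation. For $A^{\rm ext}$ the computation is immediate: the block form (\ref{Aext-sub}) shows that the top row of $A^{\rm ext}$ is identically zero, so $A^{\rm ext}(1,{\bf b})^T = (0,\, D_t f_{\alpha,k}(t_\ast,{\bf Y}_0) + A{\bf b})^T$, which vanishes exactly when ${\bf b} = -A^{-1} D_t f_{\alpha,k}(t_\ast,{\bf Y}_0)$. This confirms the eigenpair for $A^{\rm ext}$, provided $A$ is invertible.

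For $Dg^{\rm ext}_\ast$ the first row no longer vanishes, so more care is required. I will first record that $(D_tg_0)_\ast = 0$: since $\tilde f_0(t,{\bf x}) = \kappa_\alpha({\bf x})^{-k}$ and $\kappa_\alpha$ is $t$-independent, $g_0$ depends only on ${\bf x}$. Next, by the $g^{\rm ext}$-invariance of $\mathcal{E}$ (Remark \ref{rem-invariance}), the identity $(D_{\bf x}p_\alpha)^T g(t,{\bf x})\equiv 0$ holds on $\mathcal{E}$. Differentiating in $t$ at $(t_\ast,{\bf x}_\ast)$ yields $(D_{\bf x}p_\alpha)_\ast^T (D_tg)_\ast = 0$, so $(D_tg)_\ast \in T_{{\bf x}_\ast}\mathcal{E}_{\bf x}$; differentiating along any $\xi \in T_{{\bf x}_\ast}\mathcal{E}_{\bf x}$ and using $g_\ast = {\bf 0}$ shows that $(D_{\bf x}g)_\ast$ preserves $T_{{\bf x}_\ast}\mathcal{E}_{\bf x}$, hence so does $(D_{\bf x}g)_\ast^{-1}$. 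The vector $-(D_{\bf x}g)_\ast^{-1}(D_tg)_\ast$ therefore lies in $T_{{\bf x}_\ast}\mathcal{E}_{\bf x}$. Applying $Dg^{\rm ext}_\ast$ to $(1,\, -(D_{\bf x}g)_\ast^{-1}(D_tg)_\ast)^T$, the lower block cancels identically, and the upper block reduces to $-(D_{\bf x}g_0)_\ast(D_{\bf x}g)_\ast^{-1}(D_tg)_\ast$. Since $(D_{\bf x}g_0)_\ast$ is zero when $k>1$ and equals $-2c(D_{\bf x}p_\alpha)_\ast^T$ when $k=1$ (as read off from the derivation of (\ref{dec-Dg})), it annihilates $T_{{\bf x}_\ast}\mathcal{E}_{\bf x}$ in either case, so the upper block also vanishes.

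The main obstacle is justifying invertibility of $A$ and $(D_{\bf x}g)_\ast$, which is implicit in writing the eigenvectors. For $(D_{\bf x}g)_\ast$, I plan to rely on Assumption \ref{ass-nonaut-inv}: the zero eigenvalue of $Dg^{\rm ext}_\ast$ coming from the tangent direction of $M_{I'}$ is simple by (\ref{NH-spec-nonaut-stationary}), which rules out any kernel of $(D_{\bf x}g)_\ast$ transverse to that tangent direction; the implicit function theorem applied to $g(t,{\bf x}_\ast(t))\equiv 0$ then identifies $-(D_{\bf x}g)_\ast^{-1}(D_tg)_\ast$ with ${\bf x}_\ast'(t_\ast)$, giving the geometric meaning of the eigenvector. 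Invertibility of $A$ follows by transporting this through the conjugation (\ref{conj-A-Ag}) in the proof of Proposition \ref{prop-correspondence-ev-Ag-A-2}.
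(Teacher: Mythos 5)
Your proof is correct, and for the $Dg^{\rm ext}_\ast$ half it takes a genuinely different route from the paper. The $A^{\rm ext}$ part coincides with the paper's argument (a direct block computation on (\ref{Aext-sub}), i.e.\ Lemma \ref{lem-eigen-extend-1} with $a=0$). For $Dg^{\rm ext}_\ast$, the paper obtains the eigenpair of $A_g^{\rm ext}$ from Lemma \ref{lem-eigen-extend-1}, identifies $-((D_{\bf x}g)_\ast)^{-1}(D_tg)_\ast$ as the tangent ${\bf x}_\ast'(t_\ast)$ to the implicit-function-theorem curve of equilibria, and then transfers the eigenpair to $Dg^{\rm ext}_\ast$ through the conjugation identity of Proposition \ref{prop-corr-Dg-Ag}; you instead verify the eigen-equation for $Dg^{\rm ext}_\ast$ directly, handling the top ($g_0$) row by showing $(D_tg_0)_\ast=0$ and that $(D_{\bf x}g_0)_\ast$ is either zero ($k>1$) or a multiple of $(D_{\bf x}p_\alpha)_\ast^T$ ($k=1$), which annihilates the tangent space containing your eigenvector. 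Your route has two nice by-products: it derives $(D_{\bf x}p_\alpha)_\ast^T(D_tg)_\ast=0$ and the $(D_{\bf x}g)_\ast$-invariance of $T_{{\bf x}_\ast}\mathcal{E}_{\bf x}$ directly from the invariance of $\mathcal{E}$ (Remark \ref{rem-invariance}) rather than quoting them, which incidentally gives an independent proof of Lemma \ref{lem-orthogonal} free of any appearance of circularity (the paper proves that lemma \emph{using} this proposition); and it makes explicit where the $k=1$ correction $A_g^{\rm res}$ in (\ref{dec-Dg}) enters and why it is harmless. The cost is that you must justify the invertibility of $A$ and $(D_{\bf x}g)_\ast$ yourself; your appeal to the simplicity of the zero eigenvalue in Assumption \ref{ass-nonaut-inv} is a reasonable reading, and the paper is no more explicit on this point (its proof simply says the nonsingularity of $(D_{\bf x}g)_\ast$ is ``assumed''), so this is not a gap relative to the original.
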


\begin{proof}
First discuss the corresponding eigenvector of $A^{\rm ext}$.
If we pay attention to an eigenvector with {\em non-trivial $t$-component}, eigenvalue $0$ is appropriate\footnote{
It follows from Proposition \ref{prop-ev1} that this choice is the only suitable one.
Indeed, the remaining eigenpairs have nonzero eigenvalues and eigenvectors whose $t$-components are $0$.
}, which is compatible with Lemma \ref{lem-eigen-extend-1} with $a=0$.
In this case, the eigen-equation
\begin{equation*}
\begin{pmatrix}
0 & {\bf 0}_n^T \\
D_t f_{\alpha, k}(t_\ast, {\bf Y}_0) & A
\end{pmatrix} \begin{pmatrix}
1 \\ \tilde {\bf v}
\end{pmatrix} = {\bf 0}_{n+1}
\end{equation*}
yields $\tilde {\bf v} = -A^{-1}D_t f_{\alpha, k}(t_\ast, {\bf Y}_0)$.
\par
Next, because we assumed that $(D_{\bf x}g)_\ast$ is nonsingular, the Implicit Function Theorem provides a smooth $t$-parameter family $\{t, {\bf x}(t)\}$ of equilibria defined in a small neighborhood $I\subset \mathbb{R}$ of $t_\ast$. 
Thanks to Theorem \ref{thm-balance-1to1}, such a family is located on the horizon. 
Differentiating the identity $g(t, {\bf x}(t)) \equiv 0$ in $t$ at $t_\ast$, we know that the vector
\begin{equation*}
\tilde {\bf w} := -((D_{\bf x}g)_\ast)^{-1}(D_t g)_\ast
\end{equation*}
satisfies $(D_tg)_\ast + (D_{\bf x}g)_\ast \tilde {\bf w} = 0$ and is located on $T_{{\bf x}_\ast}\mathcal{E}_{\bf x}$.
Using Lemma \ref{lem-eigen-extend-1} for $A_g^{\rm ext}$ and Proposition \ref{prop-corr-Dg-Ag}, we have the statement for $Dg^{\rm ext}_\ast$.
\end{proof}

\subsection{Eigenstructure in \lq\lq transversal" direction; case 2: $k=1$}
\label{section-special-k=1}

Finally consider the transversal eigenpair of $Dg^{\rm ext}_\ast $ with $k=1$, in which case $\tilde f_0(t,{\bf x}) = \kappa_\alpha^{-1} = 1-p({\bf x})^{2c}$ and hence $Dg^{\rm ext}_\ast $ has the following form:
\begin{equation*}
Dg^{\rm ext}_\ast = A_g^{\rm ext} + A_g^{\rm res} + B_g^{\rm ext}.
\end{equation*}

Because remaining eigenvectors belong to the tangent space $T_{(t_\ast, {\bf x}_\ast)}\mathcal{E}$, the eigenvector in the present interest must be transversal to $T_{(t_\ast, {\bf x}_\ast)}\mathcal{E}$.
In particular, the \lq\lq transversal" eigenvector is written as
\begin{equation*}
\tilde {\bf v}^{\rm ext}_{\ast,\alpha}  = \begin{pmatrix}
u_0 \\ {\bf v}_{\ast, \alpha} + {\bf w}
\end{pmatrix},\quad u_0\in \mathbb{R},\quad {\bf w} \in T_{{\bf x}_\ast}\mathcal{E}_{\bf x}.
\end{equation*}
Letting $\lambda$ be the corresponding eigenvalue, we require
\begin{align*}
\lambda \begin{pmatrix}
u_0 \\ {\bf v}_{\ast, \alpha} + {\bf w}
\end{pmatrix} &= Dg^{\rm ext}_\ast \begin{pmatrix}
u_0 \\ {\bf v}_{\ast, \alpha} + {\bf w}
\end{pmatrix}\\
	&= \begin{pmatrix}
0 & -2c (D_{\bf x} p_\alpha)_\ast^T \\
(D_t g)_\ast & (D_{{\bf x}} g)_\ast
\end{pmatrix} \begin{pmatrix}
u_0 \\ {\bf v}_{\ast, \alpha} + {\bf w}
\end{pmatrix}\\
	&= \begin{pmatrix}
-2c \\ u_0(D_t g)_\ast + (D_{{\bf x}} g)_\ast ({\bf v}_{\ast, \alpha} + {\bf w})
\end{pmatrix}\\
	&= \begin{pmatrix}
-2c \\ u_0(D_t g)_\ast - C_\ast {\bf v}_{\ast, \alpha} + (D_{{\bf x}} g)_\ast{\bf w}
\end{pmatrix},
\end{align*}
where we have used $(D_{\bf x} p_\alpha)_\ast^T {\bf v}_{\ast, \alpha} = 1$ and $(D_{\bf x} p_\alpha)_\ast^T {\bf w} = 0$.
From the first component, we have the identity $\lambda u_0 = -2c$, namely $u_0 = -2c/\lambda$ (provided $\lambda \not = 0$).
Next, using the fact ${\bf w} \in T_{{\bf x}_\ast}\mathcal{E}_{\bf x}$, we further have 
\begin{equation*}
((D_{\bf x}g)_\ast - \lambda I_n)(I - P_\ast){\bf w} = (I - P_\ast) (A_g - \lambda I_n){\bf w}
\end{equation*}
proved in \cite{asym2}. 
See also Proposition \ref{prop-corr-Dg-Ag}.
We decompose the vector $(D_t g)_\ast$ as $(D_t g)_\ast= a_1 {\bf v}_{\ast, \alpha} + (I_n - P_\ast) (D_t g)_\ast$, in particular $a_1 = (D_{\bf x} p_\alpha)_\ast^T (D_t g)_\ast$.
Arranging the second identity, we have
\begin{equation}
\label{identity-transversal}
( \lambda + C_\ast) {\bf v}_{\ast, \alpha} + \frac{2c}{\lambda}\{ (D_{\bf x} p_\alpha)_\ast^T (D_t g)_\ast \} {\bf v}_{\ast, \alpha} 
	= (I_n - P_\ast) \left\{ -\frac{2c}{\lambda} (D_t g)_\ast  + (A_g - \lambda I_n) {\bf w} \right\}.
\end{equation}
Now we know the following result, which simplifies the expressions of roots.
\begin{lem}
\label{lem-orthogonal}
$(D_{\bf x} p_\alpha)_\ast^T (D_t g)_\ast = 0$. 
\end{lem}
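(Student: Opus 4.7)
The plan is to exploit the $t$-independence of the horizon together with its invariance under the desingularized flow. Since $p_\alpha$ depends only on ${\bf x}$, the horizon $\mathcal{E}=\{p_\alpha({\bf x})=1\}$ is described by a condition that does not involve $t$. Hence, for every $t\in\mathbb{R}$ and every ${\bf x}$ with $p_\alpha({\bf x})=1$, the point $(t,{\bf x})$ lies on $\mathcal{E}$, and the invariance of $\mathcal{E}$ under $g^{\rm ext}$ (Remark \ref{rem-invariance}) forces $p_\alpha({\bf x}(\tau))^{2c}\equiv 1$ along the corresponding trajectory.

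Concretely, I would first note that the invariance condition amounts to the pointwise identity
\begin{equation*}
D_{\bf x}\bigl(p_\alpha({\bf x})^{2c}\bigr)^T g(t,{\bf x}) \;=\; 0 \qquad \text{whenever } p_\alpha({\bf x})=1,
\end{equation*}
valid for \emph{all} $t\in\mathbb{R}$. This is the crucial point: because $p_\alpha$ carries no $t$-dependence, the defining constraint cuts out a cylinder $\mathbb{R}\times\mathcal{E}_{\bf x}$, so the above equality holds as $t$ varies freely with ${\bf x}$ held fixed on $\mathcal{E}_{\bf x}$.

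The second step is simply to differentiate this identity in $t$ at $(t_\ast,{\bf x}_\ast)$. Since the left factor $D_{\bf x}(p_\alpha({\bf x})^{2c})^T$ does not depend on $t$, differentiation moves directly onto $g$, producing
\begin{equation*}
D_{\bf x}\bigl(p_\alpha({\bf x}_\ast)^{2c}\bigr)^T (D_t g)_\ast \;=\; 0.
\end{equation*}
Using $p_\alpha({\bf x}_\ast)=1$ in the formula $D_{\bf x}(p_\alpha({\bf x})^{2c})=2c\,p_\alpha({\bf x})^{2c-1}D_{\bf x}p_\alpha({\bf x})$, the prefactor simplifies to $2c(D_{\bf x}p_\alpha)_\ast$, and dividing by $2c$ yields the claim.

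There is essentially no technical obstacle here; the only subtlety is to realize that, in contrast with the autonomous case, one must differentiate the invariance identity in $t$, and to confirm that this differentiation is legitimate because $\mathcal{E}$ is a cylinder in the $(t,{\bf x})$-space. No use is made of the special structure of $g$ beyond its tangency to $\mathcal{E}$, which makes the identity a purely geometric consequence of horizon invariance.
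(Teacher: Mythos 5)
Your proof is correct, but it takes a genuinely different route from the paper's. The paper's argument goes through Proposition \ref{prop-tangential-t}: it invokes the Implicit Function Theorem to obtain the $t$-parameter curve of equilibria $(t,{\bf x}(t))$ on the horizon, differentiates $g(t,{\bf x}(t))\equiv 0$ to get $(D_t g)_\ast = -(D_{\bf x}g)_\ast\tilde{\bf w}$ with $\tilde{\bf w}\in T_{{\bf x}_\ast}\mathcal{E}_{\bf x}$, and then uses the $(D_{\bf x}g)_\ast$-invariance of $T_{{\bf x}_\ast}\mathcal{E}_{\bf x}$ to kill the inner product $(D_{\bf x}p_\alpha)_\ast^T (D_{\bf x}g)_\ast\tilde{\bf w}$. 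You instead differentiate the tangency identity $D_{\bf x}(p_\alpha({\bf x})^{2c})^T g(t,{\bf x})=0$ (which holds on the cylinder $\mathbb{R}\times\mathcal{E}_{\bf x}$, as follows from Remark \ref{rem-invariance} or, explicitly, from the computation $D(p_\alpha^{2c})^T g^{\rm ext}=(1-p_\alpha^{2c})G$ in the proof of Lemma \ref{lem-G}) with respect to $t$ at fixed ${\bf x}_\ast$. This is more elementary and slightly more general: it does not require the nonsingularity of $(D_{\bf x}g)_\ast$ that underlies Proposition \ref{prop-tangential-t}, and it makes transparent that the identity is purely a consequence of the horizon being a $t$-independent invariant cylinder. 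The paper's route, on the other hand, fits naturally into the surrounding machinery since the vector $\tilde{\bf w}$ and the equilibrium curve are needed anyway for the tangential eigenstructure. Both arguments are valid under the paper's standing hypotheses.
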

\begin{proof}
Recall Remark \ref{rem-invariance} that the horizon $\mathcal{E}$ is $g^{\rm ext}$-invariant. 
In particular, $T_{(t_\ast, {\bf x}_\ast)}\mathcal{E}$ is $Dg^{\rm ext}_\ast$-invariant and its ${\bf x}$-component $T_{{\bf x}_\ast}\mathcal{E}_{\bf x}$ is $(D_{\bf x}g)_\ast$-invariant.
From identities in Proposition \ref{prop-tangential-t}, we have
\begin{equation*}
(D_t g)_\ast + (D_{\bf x} g)_\ast \tilde {\bf w}= {\bf 0}_n
\end{equation*}
for $\tilde {\bf w} \in T_{{\bf x}_\ast}\mathcal{E}_{\bf x}$ constructed in Proposition \ref{prop-tangential-t}. 
Taking the inner product with $(D_{\bf x}p_\alpha)_\ast$, we have
\begin{equation*}
(D_{\bf x}p_\alpha)_\ast^T (D_t g)_\ast + (D_{\bf x}p_\alpha)_\ast^T (D_{\bf x} g)_\ast \tilde {\bf w}= 0.
\end{equation*}
Because $T_{{\bf x}_\ast}\mathcal{E}_{\bf x}$ is $(D_{\bf x}g)_\ast$-invariant, the vector $(D_{\bf x} g)_\ast \tilde {\bf w}$ also belongs to $T_{{\bf x}_\ast}\mathcal{E}_{\bf x}$. 
In particular, 
\begin{equation*}
(D_{\bf x}p_\alpha)_\ast^T (D_{\bf x} g)_\ast \tilde {\bf w} = 0
\end{equation*}
and hence our claim holds.
\end{proof}
Because two vector spaces ${\rm span}\{{\bf v}_{\ast, \alpha} \}$ and $T_{{\bf x}_\ast}\mathcal{E}_{\bf x}$ are transversal in $\mathbb{R}^n$, the both sides in (\ref{identity-transversal}) must be ${\bf 0}_n$.
In particular, we have $\lambda ( \lambda + C_\ast)  = 0$ whose roots are $\lambda = 0, -C_\ast$.
We shall choose $\lambda = -C_\ast$ because $\lambda = 0$ induces the eigenvector which is exactly the same as the one in Proposition \ref{prop-tangential-t}.
Using this, (\ref{identity-transversal}) with $\lambda = -C_\ast$ is written as
\begin{align*}
{\bf 0}_n 
	&= \frac{2c}{C_\ast} (D_t g)_\ast + (A_g + C_\ast I_n) {\bf w}  - \tilde d{\bf v}_{\ast, \alpha}
\end{align*}
for some $\tilde d \in \mathbb{R}$, where we have used $P_\ast  (D_t g)_\ast = 0$ from Lemma \ref{lem-orthogonal}. 
In particular,
\begin{equation*}
{\bf w} = (A_g + C_\ast I_n)^{INV}\left\{ -\frac{2c}{C_\ast} (D_t g)_\ast + \tilde d{\bf v}_{\ast, \alpha}\right\},
\end{equation*}
where $(A_g + C_\ast I_n )^{INV}$ denotes the inverse\footnote{
If $- C_\ast\not\in {\rm Spec}(A_g )$, the matrix $A_g + C_\ast I_n$ is invertible and hence $(A_g + C_\ast I_n )^{INV}$ is just $(A_g + C_\ast I_n )^{-1}$ in the usual sense.
See e.g., Chapter 5.4 of \cite{K2013} for this description.
} of $A_g + C_\ast I_n$ restricted to the invariant subspace spanned by eigenvectors associated with ${\rm Spec}(A_g) \setminus \{-C_\ast\}$.
Because ${\bf w}$ originally belongs to $T_{{\bf x}_\ast} \mathcal{E}_{\bf x}$, we have
\begin{equation*}
(D_{\bf x}p_\alpha)_\ast^T (A_g + C_\ast I_n)^{INV}\left\{ -\frac{2c}{C_\ast} (D_t g)_\ast + \tilde d{\bf v}_{\ast, \alpha}\right\} = 0,
\end{equation*}
equivalently
\begin{equation*}
\tilde d = \frac{2c}{C_\ast} \frac{ (D_{\bf x}p_\alpha)_\ast^T (A_g + C_\ast I_n)^{INV} (D_t g)_\ast}{ (D_{\bf x}p_\alpha)_\ast^T (A_g + C_\ast I_n)^{INV} {\bf v}_{\ast, \alpha}}.
\end{equation*}
%
As a summary, we have the following result.

\begin{prop}[Transversal eigenpair, $k=1$]
\label{prop-ev-special-2}
Suppose that Assumption \ref{ass-f-1} holds and $k=1$.
Also suppose that $(t_\ast, {\bf x}_\ast) \in \mathcal{E}$ is an equilibrium on the horizon for the associated desingularized vector field $g^{\rm ext}$ in (\ref{desing-para-nonaut}).
Then the Jacobian matrix $Dg^{\rm ext}_\ast $ always possesses the eigenpair $\{-C_\ast, \tilde {\bf v}^{\rm ext}_{\ast,\alpha}\}$, where
\begin{equation}
\label{evec-transversal-desing}
\tilde {\bf v}^{\rm ext}_{\ast,\alpha} 
= \begin{pmatrix}
1 \\  \left\{ \frac{C_\ast}{2c} + d (A_g + C_\ast I_n )^{INV} \right\} {\bf v}_{\ast, \alpha} - (A_g + C_\ast I_n )^{INV} (D_t g)_\ast
\end{pmatrix},
\end{equation}
where the constant $d$ is given by 
\begin{equation}
\label{const_d}
d = \frac{ (D_{\bf x}p_\alpha)_\ast^T (A_g + C_\ast I_n)^{INV} (D_t g)_\ast}{ (D_{\bf x}p_\alpha)_\ast^T (A_g + C_\ast I_n)^{INV} {\bf v}_{\ast, \alpha}},
\end{equation}
whenever the denominator of $d$ does not vanish.
The expression (\ref{evec-transversal-desing}) itself still makes sense when $d=0$.
\end{prop}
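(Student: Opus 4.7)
The plan is to convert the derivation sketched in the paragraphs preceding the statement into a rigorous verification. I would start with the ansatz
\[
\tilde{\bf v}^{\rm ext}_{\ast,\alpha} = \begin{pmatrix} u_0 \\ {\bf v}_{\ast,\alpha} + {\bf w} \end{pmatrix}, \qquad {\bf w}\in T_{{\bf x}_\ast}\mathcal{E}_{\bf x},
\]
which is the most general vector transversal to $T_{(t_\ast,{\bf x}_\ast)}\mathcal{E}$ up to rescaling along the transversal ${\bf v}_{\ast,\alpha}$-direction. The decomposition $Dg^{\rm ext}_\ast = A_g^{\rm ext}+A_g^{\rm res}+B_g^{\rm ext}$ shows that when $k=1$ the $t$-row of $Dg^{\rm ext}_\ast$ equals $(0,\,-2c(D_{\bf x}p_\alpha)_\ast^T)$ (only $A_g^{\rm res}$ contributes off-diagonal, since both $A_g^{\rm ext}$ and $B_g^{\rm ext}$ have vanishing first rows), while the ${\bf x}$-row is $((D_tg)_\ast,\,(D_{\bf x}g)_\ast)$.

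Imposing the eigen-equation $Dg^{\rm ext}_\ast\tilde{\bf v}^{\rm ext}_{\ast,\alpha} = \lambda\tilde{\bf v}^{\rm ext}_{\ast,\alpha}$ componentwise and using (\ref{inner-gradp-v}) together with $(D_{\bf x}p_\alpha)_\ast^T{\bf w}=0$, the $t$-row collapses to $\lambda u_0=-2c$, so $u_0=-2c/\lambda$ and the present ansatz forces $\lambda\neq 0$. For the ${\bf x}$-row I would invoke $(D_{\bf x}g)_\ast{\bf v}_{\ast,\alpha} = -C_\ast{\bf v}_{\ast,\alpha}$ (the ${\bf x}$-block restriction of the computation in Proposition \ref{prop-ev-special-1}, which depends only on $A_g+B_g$ and is unaffected by $A_g^{\rm res}$), and apply Proposition \ref{prop-corr-Dg-Ag} to replace $(D_{\bf x}g)_\ast(I-P_\ast){\bf w}$ by $(I-P_\ast)A_g{\bf w}$. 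By Lemma \ref{lem-orthogonal}, $(D_tg)_\ast\in T_{{\bf x}_\ast}\mathcal{E}_{\bf x}$, so $(D_tg)_\ast$ has no ${\bf v}_{\ast,\alpha}$-component. Projecting the ${\bf x}$-row onto ${\rm span}\{{\bf v}_{\ast,\alpha}\}$ then yields the scalar equation $\lambda(\lambda+C_\ast)=0$; combined with $\lambda\neq 0$ this forces $\lambda=-C_\ast$ (consistent with the observation in the text that $\lambda=0$ would merely reproduce the tangential eigenpair of Proposition \ref{prop-tangential-t}).

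With $\lambda=-C_\ast$, the tangential part of the ${\bf x}$-row reads
\[
(A_g+C_\ast I_n){\bf w} = -\frac{2c}{C_\ast}(D_tg)_\ast + \tilde d\,{\bf v}_{\ast,\alpha}
\]
for a free scalar $\tilde d$ corresponding to the kernel direction ${\bf v}_{\ast,\alpha}$ of $A_g+C_\ast I_n$. Solving via the restricted inverse $(A_g+C_\ast I_n)^{INV}$ and then enforcing $(D_{\bf x}p_\alpha)_\ast^T{\bf w}=0$ pins down $\tilde d = (2c/C_\ast)\,d$ with $d$ as in (\ref{const_d}). Rescaling by $1/u_0 = C_\ast/(2c)$ so that the first component equals $1$ absorbs the factor $2c/C_\ast$ and delivers exactly (\ref{evec-transversal-desing}); a direct substitution of the resulting vector into $Dg^{\rm ext}_\ast$ closes the verification.

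The main obstacle is the proper handling of $(A_g+C_\ast I_n)^{INV}$. Since $-C_\ast$ is an eigenvalue of $A_g$ with eigenvector ${\bf v}_{\ast,\alpha}$ (from Proposition \ref{prop-ev-special-1}), the matrix $A_g+C_\ast I_n$ is never invertible on all of $\mathbb{R}^n$; the restricted inverse must act on the $A_g$-invariant complement of $\ker(A_g+C_\ast I_n)$, and one must verify that $(D_tg)_\ast$ lies in its range there (using $(D_tg)_\ast\in T_{{\bf x}_\ast}\mathcal{E}_{\bf x}$ together with a dimension count). The formal appearance of $(A_g+C_\ast I_n)^{INV}{\bf v}_{\ast,\alpha}$ in the denominator of $d$ must be interpreted under the same quotient convention, and as the statement observes, the expression (\ref{evec-transversal-desing}) still makes sense when $d=0$ even if that denominator is ill-defined.
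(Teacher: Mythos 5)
Your derivation follows the paper's own route essentially step for step: the same ansatz $(u_0,\,{\bf v}_{\ast,\alpha}+{\bf w})^T$ with ${\bf w}\in T_{{\bf x}_\ast}\mathcal{E}_{\bf x}$, the same identification of the $t$-row of $Dg^{\rm ext}_\ast$ as $(0,\,-2c(D_{\bf x}p_\alpha)_\ast^T)$ coming from $A_g^{\rm res}$, the relation $\lambda u_0=-2c$, the use of Lemma \ref{lem-orthogonal} to kill the ${\bf v}_{\ast,\alpha}$-component of $(D_tg)_\ast$, the splitting of the resulting identity along ${\rm span}\{{\bf v}_{\ast,\alpha}\}\oplus T_{{\bf x}_\ast}\mathcal{E}_{\bf x}$ to force $\lambda(\lambda+C_\ast)=0$, and the determination of $\tilde d$ from the tangency constraint. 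The main chain is correct and matches the paper.

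There is, however, a concrete error in your closing paragraph. You assert that $-C_\ast$ is an eigenvalue of $A_g$ with eigenvector ${\bf v}_{\ast,\alpha}$, so that $A_g+C_\ast I_n$ is \emph{never} invertible and $\tilde d$ is a ``free scalar corresponding to the kernel direction ${\bf v}_{\ast,\alpha}$.'' The sign is wrong: equation (\ref{vector-evkC}) (established in the proof of Proposition \ref{prop-ev-special-1}, which applies here as well) gives $A_g{\bf v}_{\ast,\alpha}=kC_\ast{\bf v}_{\ast,\alpha}=+C_\ast{\bf v}_{\ast,\alpha}$ for $k=1$, so $(A_g+C_\ast I_n){\bf v}_{\ast,\alpha}=2C_\ast{\bf v}_{\ast,\alpha}\neq 0$ and ${\bf v}_{\ast,\alpha}$ is \emph{not} in the kernel; generically $-C_\ast\notin{\rm Spec}(A_g)$ and $A_g+C_\ast I_n$ is invertible in the ordinary sense (this is exactly what the paper's footnote on $(A_g+C_\ast I_n)^{INV}$ says, and what Appendix \ref{section-numerics} confirms numerically). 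Consequently $\tilde d$ is not a kernel freedom: it is the unknown ${\bf v}_{\ast,\alpha}$-component of $\frac{2c}{C_\ast}(D_tg)_\ast+(A_g+C_\ast I_n){\bf w}$, which must lie in ${\rm span}\{{\bf v}_{\ast,\alpha}\}$ because its $(I_n-P_\ast)$-projection vanishes; it is then pinned down by requiring $(D_{\bf x}p_\alpha)_\ast^T{\bf w}=0$. Your worry about whether $(D_tg)_\ast$ lies in the range of the restricted inverse is an artifact of the same sign confusion and is not needed in the generic case. The formula you ultimately report for $\tilde d$ and the final eigenvector are nevertheless the correct ones, so the defect is in the justification of the last step rather than in the computed answer.
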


\subsection{Correspondence of eigenstructures and simplified blow-up criterion}
\label{section-parameter-dep-Y}

Our results here determine the complete correspondence of eigenpairs among $A^{\rm ext}$ and $Dg_\ast^{\rm ext}$.
In particular, blow-up power eigenvalues are completely determined by ${\rm Spec}(Dg^{\rm ext}_\ast )$, and vice versa.
The complete correspondence of eigenstructures is obtained under a mild assumption of the corresponding matrices.

\begin{table}[ht]\em
\centering
{
\begin{tabular}{cccc}
\hline
  & $A^{\rm ext}$ & $Dg^{\rm ext}_\ast$\\
\hline\\[-2mm]
Transversal eigenvalue  & $1$ & $-C_\ast$\\ [2mm]
Transversal eigenvector ($k>1$) & ${\bf v}_{0,\alpha}^{\rm ext}$ (Proposition \ref{prop-ev1}) & ${\bf v}_{\ast,\alpha}^{\rm ext}$ (Proposition \ref{prop-ev-special-1}) \\  [2mm]
Transversal eigenvector ($k=1$) & ${\bf v}_{0,\alpha}^{\rm ext}$ (Proposition \ref{prop-ev1}) & $\tilde {\bf v}_{\ast,\alpha}^{\rm ext}$ (Proposition \ref{prop-ev-special-2}) \\  [2mm]
Tangential  eigenvalue (nonautonomous) & $0$ & $0$\\ [2mm]
Tangential  eigenvector (nonautonomous) & $\begin{pmatrix}
1 \\  -A^{-1}D_t f_{\alpha, k}(t_\ast, {\bf Y}_0)
\end{pmatrix}$ & $r_{{\bf Y}_0}^{-\Lambda_\alpha^{\rm ext}}\begin{pmatrix}
1 \\  -A^{-1}D_t f_{\alpha, k}(t_\ast, {\bf Y}_0)
\end{pmatrix}$ \\  [2mm]
Tangential  eigenvalue & $\tilde \lambda$ & $\lambda = r_{{\bf Y}_0}^{-k}\tilde \lambda$ \\  [2mm]
Tangential  (generalized) eigenvector & $\begin{pmatrix}0 \\ \tilde {\bf U} \end{pmatrix}$ & $(I-P_\ast) r_{{\bf Y}_0}^{-\Lambda_\alpha^{\rm ext}} \begin{pmatrix}0 \\ \tilde {\bf U} \end{pmatrix}$ \\  [2mm]
\hline 
\end{tabular}%
}
\caption{Correspondence of eigenstructures from $A^{\rm ext}$ to $Dg^{\rm ext}_\ast$}
\flushleft
The constant $r_{{\bf Y}_0}$ is $p_\alpha({\bf Y}_0)$. 
Once a nonzero root ${\bf Y}_0$ of the balance law and eigenpairs of $A$ are given, corresponding equilibrium on the horizon $(t_\ast, {\bf x}_\ast)$ and all eigenpairs of $Dg^{\rm ext}_\ast$ are constructed by the rule on the table.
\label{table-eigen0}
\end{table}%

\begin{table}[ht]\em
\centering
{
\begin{tabular}{cccc}
\hline
  & $Dg^{\rm ext}_\ast$  & $A^{\rm ext}$\\
\hline\\[-2mm]
Transversal eigenvalue  & $-C_\ast$ & $1$\\ [2mm]
Transversal eigenvector ($k>1$) & ${\bf v}_{\ast,\alpha}^{\rm ext}$ (Proposition \ref{prop-ev-special-1}) & ${\bf v}_{0,\alpha}^{\rm ext}$ (Proposition \ref{prop-ev1}) \\  [2mm]
Transversal eigenvector ($k=1$) & $\tilde {\bf v}_{\ast,\alpha}^{\rm ext}$ (Proposition \ref{prop-ev-special-2}) & ${\bf v}_{0,\alpha}^{\rm ext}$ (Proposition \ref{prop-ev1}) \\  [2mm]
Tangential  eigenvalue (nonautonomous) & $0$ & $0$\\ [2mm]
Tangential  eigenvector (nonautonomous) & $\begin{pmatrix}
1 \\  - ((D_{\bf x} g)_\ast)^{-1}(D_t g)_\ast
\end{pmatrix}$ & $r_{{\bf x}_\ast}^{\Lambda_\alpha^{\rm ext}}\begin{pmatrix}
1 \\  - ((D_{\bf x} g)_\ast)^{-1}(D_t g)_\ast
\end{pmatrix}$ \\  [2mm]
Tangential  eigenvalue & $\lambda$ & $\tilde \lambda = r_{{\bf x}_\ast}^k\lambda$ \\  [2mm]
Tangential  (generalized) eigenvector & $(I-P_\ast) \begin{pmatrix}0 \\ \tilde {\bf u} \end{pmatrix}$ & $r_{{\bf x}_\ast}^{\Lambda_\alpha^{\rm ext}} (A_g^{\rm ext} - kC_\ast I) \begin{pmatrix}0 \\ \tilde {\bf u} \end{pmatrix}$ \\  [2mm]
\hline 
\end{tabular}%
}
\caption{Correspondence of eigenstructures from $Dg^{\rm ext}_\ast $ to $A^{\rm ext}$}
\flushleft
The constant $r_{{\bf x}_\ast}$ is $(kC_\ast)^{-1/k}$, which is positive by Theorem \ref{thm-balance-1to1}.
Once an equilibrium on the horizon $(t_\ast, {\bf x}_\ast)$ and eigenpairs of $Dg^{\rm ext}_\ast$ are given, corresponding (nonzero) root of the balance law $(t_\ast, {\bf Y}_0)$ and all eigenpairs of $A^{\rm ext}$ are constructed by the rule on the table.
\label{table-eigen2}
\end{table}%

\begin{thm}
\label{thm-blow-up-estr}
Let $(t_\ast, {\bf x}_\ast)\in \mathcal{E}$ be an equilibrium on the horizon for $g$ which is mapped to a root $(t_\ast, {\bf Y}_0)$ with nonzero ${\bf Y}_0$ of the balance law (\ref{0-balance}) through (\ref{x-to-C}) and (\ref{C-to-x}), and suppose that Assumption \ref{ass-f-1} holds.
When all the eigenpairs of the blow-up power-determining matrix $A^{\rm ext}$ associated with $(t_\ast, {\bf Y}_0)$ are determined, then all the eigenpairs of $Dg^{\rm ext}_\ast$ are constructed through the correspondence listed in Table \ref{table-eigen0}.
Similarly, if all the eigenpairs of $Dg^{\rm ext}_\ast$ are determined, then all the eigenpairs of $A^{\rm ext}$ are constructed through the correspondence listed in Table \ref{table-eigen2}.
\par
Moreover, the Jordan structure associated with eigenvalues, namely the number of Jordan blocks and their size, are identical except $k C_\ast \in {\rm Spec}(A_g^{\rm ext})$ if exists, and $1\in {\rm Spec}(A^{\rm ext})$.
\end{thm}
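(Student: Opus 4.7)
The plan is to assemble the correspondence from the pieces already developed in Sections 3.3--3.6, treating the eigenspace $\mathbb{C}^{n+1}$ as a direct sum of three invariant pieces under both $A^{\rm ext}$ and $Dg^{\rm ext}_\ast$: (i) the transversal direction (spanned by ${\bf v}^{\rm ext}_{0,\alpha}$ on the $A^{\rm ext}$-side and by ${\bf v}^{\rm ext}_{\ast,\alpha}$ or $\tilde{\bf v}^{\rm ext}_{\ast,\alpha}$ on the $Dg^{\rm ext}_\ast$-side); (ii) the distinguished nonautonomous eigenline in the zero-eigenspace, which arises from the $t$-component; and (iii) the remaining tangential subspace lying in the slice $\{t=t_\ast\}\cap T_{(t_\ast,{\bf x}_\ast)}\mathcal{E}$. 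The key conjugacy identity (\ref{conj-A-Ag}), combined with Lemma \ref{lem-ass-f-1} (which identifies $A_g^{\rm ext}$ with $-C_\ast \Lambda_\alpha^{\rm ext}+(D\tilde f^{\rm ext}_{\alpha,k})_\ast$), supplies the dictionary between $A^{\rm ext}$ and $A_g^{\rm ext}$, and Proposition \ref{prop-corr-Dg-Ag} supplies the dictionary between $A_g^{\rm ext}$ and $Dg^{\rm ext}_\ast$ once one restricts to $(I-P^{\rm ext}_\ast)$-components.

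I would carry out the steps as follows. First, for the transversal piece, I would invoke Proposition \ref{prop-ev1} to produce the eigenpair $\{1,{\bf v}^{\rm ext}_{0,\alpha}\}$ of $A^{\rm ext}$, and Proposition \ref{prop-ev-special-1} (when $k>1$) or Proposition \ref{prop-ev-special-2} (when $k=1$) to produce $\{-C_\ast,{\bf v}^{\rm ext}_{\ast,\alpha}\}$ or $\{-C_\ast,\tilde{\bf v}^{\rm ext}_{\ast,\alpha}\}$ of $Dg^{\rm ext}_\ast$, both of which lie outside $T_{(t_\ast,{\bf x}_\ast)}\mathcal{E}$. Second, for the distinguished nonautonomous eigenpair, Proposition \ref{prop-tangential-t} directly supplies the kernel vectors on both sides; I would just check that the nontrivial $t$-component guarantees linear independence from all other eigenvectors in the list. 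Third, for the remaining tangential eigenstructure I would combine Proposition \ref{prop-correspondence-ev-Ag-A-2} (giving the bijection between eigenpairs of $A^{\rm ext}$ with zero $t$-component and eigenpairs of $A_g^{\rm ext}$ with zero $t$-component, rescaled by $r_{{\bf x}_\ast}^{\Lambda_\alpha^{\rm ext}}$ and $r_{{\bf x}_\ast}^k$) with Proposition \ref{prop-evec-Dg} (giving the bijection between eigenpairs of $A_g^{\rm ext}$ restricted to the $(I-P_\ast^{\rm ext})$-image and eigenpairs of $Dg^{\rm ext}_\ast$ on $T_{(t_\ast,{\bf x}_\ast)}\mathcal{E}$). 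Finally, a dimension count ($n+1=1+1+(n-1)$) confirms that the three pieces exhaust both ${\rm Spec}(A^{\rm ext})$ and ${\rm Spec}(Dg^{\rm ext}_\ast)$, so the correspondence is a bijection.

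For the Jordan-block statement, the generalized-eigenspace version of the conjugation identity (\ref{conj-A-Ag-2}) immediately transports the Jordan structure from $A^{\rm ext}$ to $A_g^{\rm ext}$ (the conjugation by $r_{{\bf x}_\ast}^{\Lambda_\alpha^{\rm ext}}$ preserves sizes and multiplicities of Jordan blocks, and shifts eigenvalues by the factor $r_{{\bf x}_\ast}^k$). Then Proposition \ref{prop-corr-Dg-Ag}, applied via the identity $(Dg^{\rm ext}_\ast-\lambda I)^N(I-P^{\rm ext}_\ast)=(I-P^{\rm ext}_\ast)(A^{\rm ext}_g-\lambda I)^N$, transfers Jordan blocks from $A_g^{\rm ext}$ to $Dg^{\rm ext}_\ast$ without change, with the single exception flagged in Proposition \ref{prop-evec-Dg}: namely at $\lambda=kC_\ast\in{\rm Spec}(A_g^{\rm ext})$ (equivalently $\tilde\lambda=1\in{\rm Spec}(A^{\rm ext})$), where the projection $(I-P^{\rm ext}_\ast)$ may shift a block's size by $\pm 1$ due to the extra eigenvector ${\bf v}^{\rm ext}_{\ast,\alpha}$ contributed by the transversal direction.

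The main obstacle, and the part that requires the most care, is the exceptional case at the eigenvalue $1$ (resp.\ $kC_\ast$): one must verify that the transversal eigenvector produced in step one is not spuriously double-counted among the tangential generalized eigenvectors, and that the two alternatives in Proposition \ref{prop-evec-Dg} are precisely what is needed to absorb the rank-one perturbation by $P_\ast^{\rm ext}$. Concretely, I would argue that if $1$ is a semisimple eigenvalue of $A^{\rm ext}$ with geometric multiplicity $m$, then $A_g^{\rm ext}$ has eigenvalue $kC_\ast$ with the same multiplicity, but only an $(m-1)$-dimensional piece survives in the kernel of $(Dg^{\rm ext}_\ast+C_\ast I)$ (once ${\bf v}^{\rm ext}_{\ast,\alpha}$ is removed), with the missing dimension recovered by the transversal eigenpair of eigenvalue $-C_\ast$. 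A parallel, slightly more delicate bookkeeping handles the case of a nontrivial Jordan block at $1$ by tracking how $(I-P^{\rm ext}_\ast)$ acts on a Jordan chain terminating at ${\bf v}^{\rm ext}_{0,\alpha}$. Once this exceptional case is dispatched, the remainder of the theorem is pure assembly.
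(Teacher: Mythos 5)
Your proposal is correct and follows essentially the same route as the paper: the proof there is likewise a direct assembly of Propositions \ref{prop-ev1}, \ref{prop-ev-special-1}, \ref{prop-ev-special-2} for the transversal eigenpair, Propositions \ref{prop-corr-Dg-Ag}, \ref{prop-evec-Dg}, \ref{prop-correspondence-ev-Ag-A-2} for the tangential part, Lemma \ref{lem-eigen-extend-1} together with Proposition \ref{prop-tangential-t} for the distinguished nonautonomous eigenpair, and Proposition \ref{prop-correspondence-ev-Ag-A-2} for the Jordan-block claim away from the exceptional eigenvalue $kC_\ast$ (resp.\ $1$). Your additional bookkeeping around the exceptional eigenvalue is more detailed than what the paper records, but it is consistent with the two alternatives in Proposition \ref{prop-evec-Dg} and with the exclusion stated in the theorem.
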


\begin{proof}
Correspondences between {\em Transversal eigenvalue} and {\em Transversal eigenvector} in Tables follow from Propositions \ref{prop-ev1} and \ref{prop-ev-special-1}, and \ref{prop-ev-special-2}, while correspondences between {\em Tangential eigenvalue} and {\em Tangential  (generalized) eigenvector} in Tables follow from Propositions \ref{prop-corr-Dg-Ag}, \ref{prop-evec-Dg} and \ref{prop-correspondence-ev-Ag-A-2}.
\par
A special pair {\em Tangential eigenvalue (nonautonomous)} and {\em Tangential eigenvector (nonautonomous)} follows from Lemma \ref{lem-eigen-extend-1}, Propositions \ref{prop-corr-Dg-Ag}, \ref{prop-correspondence-ev-Ag-A-2} and \ref{prop-tangential-t}.
\par
If $kC_\ast \not \in {\rm Spec}(Dg^{\rm ext}_\ast )$ (in particular, $1\in {\rm Spec}(A^{\rm ext})$ is simple from Propositions \ref{prop-ev1} and  \ref{prop-correspondence-ev-Ag-A-2}), the number and size of Jordan blocks are identical by Proposition \ref{prop-correspondence-ev-Ag-A-2}.
\end{proof}

Similar to autonomous cases, we have a simple criterion of the existence and characterization of type-I blow-up solutions through the complete correspondence of eigenstructures of associated systems.

\begin{thm}[Criterion of existence of blow-ups]
\label{thm-existence-blow-up}
Let $(t_\ast, {\bf Y}_{0,\ast})$ be a root of the balance law (\ref{0-balance}) with ${\bf Y}_{0,\ast}\not = {\bf 0}_n$.
Assume that ${\rm Spec}(A)\cap i\mathbb{R} = \emptyset$, where $A$ is defined as (\ref{Aext-sub}) for the corresponding blow-up power determining matrix $A^{\rm ext}$ associated with ${\bf Y}_{0,\ast}$.
Then (\ref{ODE-original}) admits a blow-up solution ${\bf y}(t)$ blowing up as $t \to t_\ast - 0$.
Moreover, if $f$ is $C^4$ and ${\rm Spec}(A^{\rm ext})$ satisfies the non-resonance condition, the corresponding blow-up solution admits the asymptotic behavior $y_i(t) \sim Y_{0,i}\theta(t)^{-\alpha_i/k}$ as $t\to t_{\max} < \infty$, provided $Y_{0,i} \not = 0$.
\end{thm}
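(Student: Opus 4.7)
The plan is to verify the hypotheses of the blow-up description in Theorem \ref{thm:blowup} via the correspondence established in Theorem \ref{thm-blow-up-estr}, and then extract the precise leading order from the ansatz (\ref{blow-up-sol}). First I would use Theorem \ref{thm-balance-1to1} to convert the given root $(t_\ast, {\bf Y}_{0,\ast})$ with ${\bf Y}_{0,\ast}\neq {\bf 0}_n$ into an equilibrium $(t_\ast, {\bf x}_\ast)\in \mathcal{E}$ of the desingularized vector field $g^{\rm ext}$, with $C_\ast>0$ and $r_{{\bf x}_\ast}=(kC_\ast)^{-1/k}$ determined by $(t_\ast, {\bf Y}_{0,\ast})$.

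Next I would examine ${\rm Spec}(Dg^{\rm ext}_\ast)$ through Table \ref{table-eigen0}. The $n+1$ eigenvalues decompose into the transversal eigenvalue $-C_\ast$, the single nonautonomous tangential eigenvalue $0$ coming from the $t$-direction (Proposition \ref{prop-tangential-t}), and the $n-1$ tangential eigenvalues $\lambda=r_{{\bf Y}_{0,\ast}}^{-k}\tilde\lambda$ for $\tilde\lambda\in {\rm Spec}(A)\setminus\{1\}$. The hypothesis ${\rm Spec}(A)\cap i\mathbb{R}=\emptyset$ therefore implies that $-C_\ast<0$ and all tangential non-$t$ eigenvalues lie off $i\mathbb{R}$, leaving exactly one eigenvalue ($0$) on the imaginary axis. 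In particular $(D_{\bf x}g)_\ast$ is invertible (since $0\notin {\rm Spec}(A)$), so the implicit function theorem applied to $g(t,{\bf x})=0$ near $(t_\ast,{\bf x}_\ast)$ produces a smooth curve ${\bf x}_\ast(t)$ of equilibria on $\mathcal{E}$ for $t$ in some interval $I'\ni t_\ast$. The spectral count is preserved under small $t$-perturbations (by continuity), so Assumption \ref{ass-nonaut-inv} holds with $M_{I'}=\{(t,{\bf x}_\ast(t))\}$, i.e., $M_{I'}$ is a curve of hyperbolic equilibria for the $(t,{\bf x})$-block and defines an NHIM on the horizon.

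Having set up the NHIM, I would then apply Theorem \ref{thm:blowup}. To invoke it one must exhibit an initial point whose image under $T_{{\rm para};\alpha}$ lies on $W^s_{\rm loc}(M_I;g^{\rm ext})\cap \mathcal{D}$; this is guaranteed because the stable direction ${\bf v}^{\rm ext}_{\ast,\alpha}$ (or $\tilde{\bf v}^{\rm ext}_{\ast,\alpha}$ when $k=1$) associated with $-C_\ast$ satisfies $(Dp_\alpha)_\ast^T {\bf v}^{\rm ext}_{\ast,\alpha}=1$ by (\ref{inner-gradp-v}), so the local stable foliation crosses transversely into $\mathcal{D}=\{p_\alpha<1\}$ and, by the $g^{\rm ext}$-invariance of $\mathcal{D}$ (Remark \ref{rem-invariance}), stays inside $\mathcal{D}$. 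Theorem \ref{thm:blowup} then yields a solution ${\bf y}(t)$ of (\ref{ODE-original}) that blows up at some $t_{\max}\in I$, and choosing $I$ arbitrarily small around $t_\ast$ forces $t_{\max}=t_\ast$.

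For the asymptotic behavior under $C^4$ regularity and non-resonance of ${\rm Spec}(A^{\rm ext})$, I would transport the non-resonance to ${\rm Spec}(Dg^{\rm ext}(t_{\max},{\bf x}_\ast(t_{\max})))\setminus i\mathbb{R}$ via the scaling $\lambda = r_{{\bf Y}_{0,\ast}}^{-k}\tilde\lambda$ and $-C_\ast$ from Table \ref{table-eigen0} (which is a linear scaling that preserves integer-linear combinations up to an overall positive factor), so that Theorem \ref{thm:blowup} delivers $y_i(t)\sim C_i\,\theta(t)^{-\alpha_i/k}$ with $C_i\neq 0$ exactly when $x_{\ast,i}\neq 0$; by (\ref{x-to-C}) this is equivalent to $Y_{0,i,\ast}\neq 0$. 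Finally, writing ${\bf y}(t)=\theta(t)^{-\Lambda_\alpha/k}{\bf Y}(t)$ with ${\bf Y}(t)\to {\bf Y}_{0,\ast}$ as $t\to t_{\max}$ (valid under the non-resonance assumption by the arguments of \cite{asym1}) forces $C_i=Y_{0,i,\ast}$. The main obstacle I anticipate is bookkeeping the eigenvalue count and verifying that only the single $t$-direction zero persists on $i\mathbb{R}$ — this is handled cleanly by Theorem \ref{thm-blow-up-estr}, but one must keep careful track of which eigenvalues come from the transversal direction versus the tangent bundle of $\mathcal{E}$, especially when distinguishing the $k=1$ and $k>1$ cases where the transversal eigenvector differs.
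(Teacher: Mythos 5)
Your proposal follows essentially the same route as the paper's proof: convert the hyperbolicity of $A$ via Theorems \ref{thm-balance-1to1} and \ref{thm-blow-up-estr} into a one-dimensional normally hyperbolic curve of hyperbolic equilibria on the horizon (the paper applies the Implicit Function Theorem to the balance law rather than to $g=0$, but these are interchangeable through the correspondence), invoke Theorem \ref{thm:blowup}, and identify the leading coefficient $Y_{0,i}$ from the embedding. One small caution: the transversal eigenvalue $1$ of $A^{\rm ext}$ corresponds to $-C_\ast=-r_{{\bf x}_\ast}^{-k}/k$ rather than to $r_{{\bf x}_\ast}^{-k}\cdot 1$, so your claim that the non-resonance condition transports by ``a linear scaling\ldots up to an overall positive factor'' is not literally correct for that eigenvalue — though the paper is equally terse on this point.
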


\begin{proof}
Eigenvalues ${\rm Spec}(A^{\rm ext})$ of $A^{\rm ext}$ consist of $1$ and remaining $n$ eigenvalues, all of which except the simple eigenvalue $0$ have nonzero real parts by the original form of $A^{\rm ext}$ in (\ref{Aext-sub}) and our assumption.
Therefore the Implicit Function Theorem ensures the existence of a smooth $t$-parameter family $\{(t, {\bf Y}_0(t)) \in t\in I_\ast\}$ of roots of the balance law with an open interval $I_\ast \subset \mathbb{R}$ including $t_\ast$ such that ${\bf Y}_0(t_\ast) = {\bf Y}_{0,\ast}$.
Taking $I_\ast$ smaller if necessary, we may conclude that the matrix $A^{\rm ext} = A^{\rm ext}(t)$ with $t\in I_\ast$ has the following structure such that $A^{\rm ext}(t_\ast)$ originally possesses:
\begin{itemize}
\item $0$ is a simple eigenvalue.
\item The corresponding submatrix $A(t)$ is hyperbolic.
\end{itemize}
Then, through Theorems \ref{thm-balance-1to1} and \ref{thm-blow-up-estr}, roots of the balance law $\{(t, {\bf Y}_0(t)) \in I_\ast\}$ corresponds to the $t$-parameter family of equilibria on the horizon $M_\ast = \{ {\bf p}_\ast(t) = (t, {\bf x}_\ast(t)) \mid t\in I_\ast\} \subset \mathcal{E}$ for $g^{\rm ext}$, and the associated Jacobian matrix $Dg^{\rm ext}(t, {\bf x}_\ast(t))$ has the following structure for all $t\in I_\ast$:
\begin{itemize}
\item $0$ is a simple eigenvalue and the associated eigenvector is tangent to the horizon $\mathcal{E}$.
\item The corresponding submatrix $D_{\bf x}g(t, {\bf x}_\ast(t))$ is hyperbolic.
\end{itemize}
Therefore the family $M_\ast$ constructs a $1$-dimensional NHIM
Moreover $W^s_{\rm loc}(M_\ast; g^{\rm ext})\cap \mathcal{D} \not = \emptyset$ because $-C_\ast < 0$, and the associated eigenvector ${\bf v}^{\rm ext}_{\ast, \alpha}$, determines the distribution of $W^s_{\rm loc}(M_\ast; g^{\rm ext})$ transversal to $\mathcal{E}$.
Then Theorem \ref{thm:blowup} provides the corresponding blow-up solution.
\par
Under the non-resonance condition, the asymptotic behavior $y_i(t) = O(\theta(t)^{-\alpha_i/k})$ is also provided, as long as $x_{\ast, i}\not = 0$.
Therefore, the bijection
\begin{equation*}
\frac{x_i(t)}{(1-p_\alpha({\bf x}(t))^{2c})^{\alpha_i}} = \theta(t)^{-\alpha_i/k}Y_{0,i},\quad i=1,\cdots, n
\end{equation*}
provides the concrete form of the blow-up solution ${\bf y}(t)$ whenever $Y_{0,i} \not = 0$.
%
\end{proof}

We therefore conclude that, like the autonomous case \cite{asym2}, {\em asymptotic expansions of blow-up solutions themselves provide a criterion of the existence of blow-up solutions}.
On the other hand, blow-up power eigenvalues do {\em not} extract exact dynamical properties around the corresponding blow-up solutions, as shown below.
The proof is the consequence of the correspondence of eigenstructures; Theorem \ref{thm-blow-up-estr}, as well as manifolds constructed in Theorem \ref{thm-existence-blow-up}.

\begin{thm}[Stability gap, cf. \cite{asym2}]
\label{thm-stability}
Let $f$ be an asymptotically quasi-homogeneous vector field of type $\alpha = (0,\alpha_1, \ldots, \alpha_n)$ and order $k+1$ satisfying Assumption \ref{ass-f-1}.
Let ${\bf p}_\ast = (t_\ast, {\bf x}_\ast)$ be an equilibrium on the horizon for the desingularized vector field $g^{\rm ext}$ associated with $f$ such that $W^s_{\rm loc}({\bf p}_\ast; g^{\rm ext})\cap \mathcal{D}\not = \emptyset$, and ${\bf Y}_0$ be the corresponding root of the balance law which is not identically zero\footnote{
Theorem \ref{thm-existence-blow-up} and the construction of an inflowing invariant manifold $M_I$ admitting normally hyperbolic property as well as arguments in \cite{Mat2025_NHIM} indicate that $W^s_{\rm loc}({\bf p}_\ast; g^{\rm ext})$ is well-defined via the local invariant foliation $\mathcal{F}^s$ of $W^s_{\rm loc}(M_I; g^{\rm ext})$.
}.
If
\begin{align*}
m &:= \dim W^s_{\rm loc}({\bf p}_\ast; g^{\rm ext}),\quad m_{A} := \sharp \{\lambda\in {\rm Spec}(A) \mid {\rm Re}\lambda < 0\},
\end{align*}
then we have $m = m_{A}+1$.
Here the matrix $A$ is given in (\ref{Aext-sub}).
\end{thm}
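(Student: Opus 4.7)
The plan is to equate $m$ with the number of eigenvalues of $Dg^{\rm ext}_\ast$ having strictly negative real part, and then to translate this count, via the correspondence of Theorem \ref{thm-blow-up-estr}, into a statement about ${\rm Spec}(A)$.

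First I would identify $W^s_{\rm loc}({\bf p}_\ast; g^{\rm ext})$ as the stable fiber at ${\bf p}_\ast$ of the local invariant stable foliation $\mathcal{F}^s$ of $W^s_{\rm loc}(M_\ast; g^{\rm ext})$, where $M_\ast$ is the one-dimensional inflowing NHIM consisting of the $t$-parameterized family of equilibria on the horizon through ${\bf p}_\ast$; this is precisely the manifold invoked in the footnote, and its construction is the one carried out in the proof of Theorem \ref{thm-existence-blow-up} via the Implicit Function Theorem applied using the tangential zero eigenvalue furnished by Proposition \ref{prop-tangential-t}. Standard NHIM theory then gives that $\dim W^s_{\rm loc}({\bf p}_\ast; g^{\rm ext})$ equals the number of normal stable eigenvalues of $Dg^{\rm ext}_\ast$ along $M_\ast$; since the tangent direction $T_{{\bf p}_\ast}M_\ast$ accounts for precisely the single zero eigenvalue, whose eigenvector $(1,-((D_{\bf x}g)_\ast)^{-1}(D_tg)_\ast)^T$ from Proposition \ref{prop-tangential-t} is exactly the tangent to the curve of equilibria, this is equivalent to $m$ being the total count of eigenvalues of $Dg^{\rm ext}_\ast$ with negative real part.

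Next I would enumerate the spectrum of $Dg^{\rm ext}_\ast$ using Theorem \ref{thm-blow-up-estr}. The transversal eigenvalue is $-C_\ast$, by Proposition \ref{prop-ev-special-1} (for $k>1$) or Proposition \ref{prop-ev-special-2} (for $k=1$). By Lemma \ref{lem-Cast-nonneg} together with the last statement of Theorem \ref{thm-balance-1to1} applied to the nonzero root ${\bf Y}_0$, we have $C_\ast > 0$; hence $-C_\ast$ contributes $1$ to the stable count. The tangential nonautonomous eigenvalue $0$ is neutral and does not contribute. The remaining $n-1$ tangential eigenvalues are in one-to-one correspondence with ${\rm Spec}(A)\setminus\{1\}$ via $\lambda = r_{{\bf x}_\ast}^{-k}\tilde\lambda$ from Table \ref{table-eigen2}, and since $r_{{\bf x}_\ast}^{-k}>0$ the sign of the real part is preserved under this scaling.

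Combining the contributions: because $1\in{\rm Spec}(A)$ (Proposition \ref{prop-ev1}) has positive real part and is therefore excluded from $m_A$, the number of $\tilde\lambda \in {\rm Spec}(A)\setminus\{1\}$ with negative real part is exactly $m_A$; adding the transversal contribution yields $m = 1 + m_A$. The main obstacle is not the eigenvalue bookkeeping, which is essentially mechanical once Theorem \ref{thm-blow-up-estr} is in hand, but rather justifying the very first identification: that the single zero of $Dg^{\rm ext}_\ast$ is entirely absorbed into $T_{{\bf p}_\ast}M_\ast$ so no spurious center directions remain in the normal bundle, and that the stable fiber dimension formula for an inflowing NHIM applies verbatim here. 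This is handled by combining the explicit tangency of the zero eigenvector with the NHIM framework developed in \cite{Mat2025_NHIM} along the lines of \cite{W2013_NHIM}.
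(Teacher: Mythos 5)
Your argument is correct and follows exactly the route the paper intends: the paper omits a detailed proof of Theorem \ref{thm-stability}, stating only that it is a consequence of the eigenstructure correspondence (Theorem \ref{thm-blow-up-estr}) together with the NHIM $M_\ast$ and its stable foliation constructed in Theorem \ref{thm-existence-blow-up}, and your proposal fleshes out precisely that bookkeeping — the transversal eigenvalue $-C_\ast<0$ contributing the $+1$, the zero eigenvalue absorbed into $T_{{\bf p}_\ast}M_\ast$, and the remaining tangential eigenvalues matched to ${\rm Spec}(A)\setminus\{1\}$ with real-part signs preserved under the positive scaling $r_{{\bf x}_\ast}^{-k}$.
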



\section{Examples}
\label{section-examples}

Examples demonstrating simple criteria for characterizing (type-I) blow-up solutions for nonautonomous systems are collected here.

\subsection{The first Painlev\'{e} equation}

First we consider the first Painlev\'{e} equation 
\begin{equation}
\label{Pain1-original}
u'' = 6u^2 + t,\quad ' = \frac{d}{dt}
\end{equation}
from the viewpoint of blow-up description.
It is well-known that (\ref{Pain1-original}) possesses the following solution: for fixed $t_0\in \mathbb{R}$, 
\begin{equation}
\label{blow-up-Pain1}
u(t) \sim (t_{\max} - t)^{-2}\quad \text{ as }\quad t \to t_{\max}-0
\end{equation}
for some $t_{\max} = t_{\max}(t_0) > t_0$ and sufficiently large initial point $u_0 = u(t_0)$.
Although this asymptotic behavior can be derived via substitution of a formal (Frobenius-type) power series solution into the equation (e.g., \cite{KJH1997}), we shall derive the existence of such a solution through dynamics at infinity.
\par
Rewrite (\ref{Pain1-original}) as the system of the first order ODEs:
\begin{equation}
\label{Pain1}
\begin{cases}
\chi' = 1, & \\
u' = v, & \\
v' = 6u^2 + \chi, &
\end{cases}\quad ' = \frac{d}{dt}.
\end{equation}
We immediately have the following property.
\begin{lem}
The system (\ref{Pain1}) is asymptotically quasi-homogeneous of type $(0,2,3)$ and order $k+1 = 2$.
\end{lem}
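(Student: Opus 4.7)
The plan is a direct verification against Definition \ref{dfn-AQH} with $\alpha = (0,2,3)$ and $k = 1$, so that $k + \alpha_i$ equals $1, 3, 4$ for $i = 0, 1, 2$ respectively. The candidate quasi-homogeneous principal part $f_{\alpha,k}^{\rm ext}$ of the right-hand side of (\ref{Pain1}) is
\begin{equation*}
f_{0;\alpha,k}(\chi, u, v) = 0,\quad f_{1;\alpha,k}(\chi, u, v) = v,\quad f_{2;\alpha,k}(\chi, u, v) = 6 u^2,
\end{equation*}
with the residual terms being $f_{0;{\rm res}} = 1$, $f_{1;{\rm res}} = 0$, and $f_{2;{\rm res}} = \chi$. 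The first step is to check quasi-homogeneity of $f_{\alpha,k}^{\rm ext}$: under the scaling $(\chi, u, v) \mapsto (\chi, s^2 u, s^3 v)$ one has $v \mapsto s^3 v = s^{k+\alpha_1} v$ and $6u^2 \mapsto 6 s^4 u^2 = s^{k+\alpha_2}(6u^2)$, while the zero function trivially matches any order. This aligns with Remark \ref{rem-QH-0th}, where the constant $f_0 \equiv 1$ is treated as residual in the nonautonomous setting.

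The second step is the asymptotic check (\ref{residual}). For the three components one computes
\begin{align*}
f_0(s^{\Lambda_\alpha}(\chi,u,v)) - s^{1} \cdot 0 &= 1 = o(s) \quad \text{as } s \to \infty,\\
f_1(s^{\Lambda_\alpha}(\chi,u,v)) - s^{3} v &= s^3 v - s^3 v = 0,\\
f_2(s^{\Lambda_\alpha}(\chi,u,v)) - s^{4} (6 u^2) &= (6 s^4 u^2 + \chi) - 6 s^4 u^2 = \chi = o(s^4),
\end{align*}
uniformly on compact subsets of the unit sphere in the homogeneous variables $(u,v)$ with $\chi$ restricted to any compact subset of $\mathbb{R}$, which is exactly the admissible-domain condition of Definition \ref{dfn-index} with $I_\alpha = \{1,2\}$.

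Since each estimate holds with the appropriate exponent $k + \alpha_i$ and $f_{\alpha,k}^{\rm ext}$ is itself quasi-homogeneous of type $(0,2,3)$ and order $k+1 = 2$, the definition is satisfied and the lemma follows. There is no substantial obstacle: the calculation is routine, and the only subtle point is remembering that in the extended nonautonomous formulation the constant component $f_0 = 1$ is absorbed into the residual part rather than the quasi-homogeneous part, as emphasized in Remark \ref{rem-QH-0th}.
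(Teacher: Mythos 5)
Your verification is correct and is exactly the routine check the paper leaves implicit (the lemma is stated without proof): the quasi-homogeneous part is $(0, v, 6u^2)$, the residuals $1$ and $\chi$ are $o(s^{k+\alpha_i})$ uniformly for $\chi$ in compact sets, and the constant $t$-component is correctly assigned to the residual per Remark \ref{rem-QH-0th}. Nothing to add.
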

A blow-up description of this system is discussed in \cite{Mat2025_NHIM}, which required lengthy calculations.
We revisit this system here and unravel the correspondence of two descriptions of blow-ups we have derived.
First, the balance law is
\begin{equation}
\label{Pain1-balance}
2U = V,\quad 3V = 6U^2
\end{equation}
for $(\chi, U, V)$ whose solutions, if exist, determine coefficients of (type-I) blow-ups.
Solutions are
\begin{equation*}
(\chi, U, V) = (\chi, 0,0),\quad (\chi, 1, 2),\quad \chi\in \mathbb{R}.
\end{equation*}
Our interest here is the nontrivial root $(\chi, 1, 2)$, which corresponds to the family
\begin{equation*}
M = \left\{ \left. (\chi, {\bf x}_\ast(\chi)) \equiv \left(\chi,\, \frac{1}{17^{1/6}},\,  \frac{2}{17^{1/4}}\right) \right| \chi\in \mathbb{R} \right\}
\end{equation*}
as an invariant manifold on the horizon $\mathcal{E} \equiv \{(\chi, {\bf x}) \mid p_\alpha(\chi,{\bf x})=1\}$ for the desingularized vector field $g$ associated with (\ref{Pain1}).
Obviously, the vector $(1,0,0)^T$ is the tangent vector of $M$ at any points.
The blow-up power determining matrix at $(\chi, U, V) = (\chi, 1, 2)$ is
\begin{equation*}
\begin{pmatrix}
0 & 0 & 0 \\
0 & -2 & 1 \\
0 & 12U & -3
\end{pmatrix}_{(\chi, U, V) = (\chi, 1,2)} = \begin{pmatrix}
0 & 0 & 0 \\
0 & -2 & 1 \\
0 & 12 & -3
\end{pmatrix}
\end{equation*}
whose eigenvalues are $0, 1, -6$.
Because the eigenvalue $0$ is simple, and admits the eigenvector $(1,0,0)^T$ corresponding to the tangent vector of $M$, and hence, for any compact interval $I\subset \mathbb{R}$, the tube $M_I$ admits normally hyperbolic structure for $g$.
We then know the following, as the consequence of Theorem \ref{thm-existence-blow-up}.
\begin{thm}
The system (\ref{Pain1}) admits a type-I blow-up solution with the asymptotic behavior
\begin{equation*}
u(t)\sim (t_{\max} - t)^{-2},\quad v(t)\sim 2(t_{\max} - t)^{-3}\quad \text{ as }\quad t\to t_{\max}-0.
\end{equation*}
\end{thm}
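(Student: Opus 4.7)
The plan is to apply Theorem \ref{thm-existence-blow-up} directly to the nontrivial root $(\chi, U, V) = (\chi, 1, 2)$ of the balance law (\ref{Pain1-balance}) identified above. All the structural data---the root, the $t$-parametrized curve $M$ of equilibria on the horizon, and the blow-up power-determining matrix---are already assembled in the preceding discussion, so only the hyperbolicity and smoothness hypotheses of the theorem remain to be checked.

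Because $f_{\alpha, k}$ does not depend on $t$ (the term $\chi$ appears only in the residual part of $f^{\rm ext}$), the decomposition (\ref{Aext-sub}) reduces to $A^{\rm ext} = {\rm diag}(0, A)$, with
\[
A = \begin{pmatrix} -2 & 1 \\ 12 & -3 \end{pmatrix}, \qquad {\rm Spec}(A) = \{1, -6\}.
\]
This spectrum is disjoint from $i\mathbb{R}$, so the existence part of Theorem \ref{thm-existence-blow-up} immediately provides a type-I blow-up solution at some finite blow-up time $t_{\max}$, which in principle can be computed via the formula (\ref{tmax}). Since $f$ is polynomial the $C^4$-regularity required for the asymptotic assertion is automatic, and with $\alpha = (0, 2, 3)$, $k = 1$, $Y_{0,1} = 1$, $Y_{0,2} = 2$, the formula $y_i(t) \sim Y_{0,i}\theta(t)^{-\alpha_i/k}$ from Theorem \ref{thm-existence-blow-up} translates directly to $u(t) \sim (t_{\max}-t)^{-2}$ and $v(t) \sim 2(t_{\max}-t)^{-3}$.

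The main technical step I anticipate is the verification of the non-resonance condition on ${\rm Spec}(A^{\rm ext}) = \{0, 1, -6\}$. The rational relation between $1$ and $-6$ forces a case analysis in the Sternberg-Sell style: for each target eigenvalue $\lambda_j \in \{1,-6\}$ and each admissible tuple $(a_1, a_2)$ with the weight restrictions specified in Theorem \ref{thm:blowup}, one must check that $a_1 - 6 a_2 \neq \lambda_j$ under the prescribed constraints. Everything else reduces to routine bookkeeping through the correspondences summarized in Tables \ref{table-eigen0} and \ref{table-eigen2}, where the transversal eigenvalue $-C_\ast$ of $Dg^{\rm ext}_\ast$ gives the stable direction entering the physical blow-up, while the tangential eigenvalue $0$ (reflecting the $t$-parametrization of $M$) corresponds to motion along the invariant curve on the horizon.
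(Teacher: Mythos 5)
Your proposal follows exactly the paper's route: compute the roots of the balance law (\ref{Pain1-balance}), form $A^{\rm ext}$ with its lower-right block $A$, observe ${\rm Spec}(A)=\{1,-6\}$ is disjoint from $i\mathbb{R}$, note that the simple eigenvalue $0$ carries the tangent direction of the curve $M$ of equilibria on the horizon, and invoke Theorem \ref{thm-existence-blow-up}; all of your computations agree with the paper's.

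The one place where your anticipated execution would derail is the non-resonance check. As you set it up -- on ${\rm Spec}(A^{\rm ext})$ with the eigenvalues $1$ and $-6$ and the weight restrictions of Theorem \ref{thm:blowup} -- the tuple $(a_1,a_2)=(7,1)$ gives $7\cdot 1 + 1\cdot(-6) = 1 = \lambda_1$, i.e.\ an apparent resonance, so the case analysis you propose would not close. The condition that actually governs the refined asymptotics in Theorem \ref{thm:blowup} is imposed on ${\rm Spec}(Dg^{\rm ext}(t_{\max},{\bf x}_\ast(t_{\max})))\setminus i\mathbb{R}$, which by the correspondence of Theorem \ref{thm-blow-up-estr} is $\{-C_\ast,\,-6C_\ast\}$ with $C_\ast>0$; the two sets are \emph{not} related by a uniform positive scaling (the transversal eigenvalue $1$ maps to $-C_\ast$, the tangential $-6$ to $-6C_\ast$), so non-resonance is not transported verbatim from one to the other. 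For the correct set the check is immediate and needs no case analysis: every admissible combination equals $-(a_1+6a_2)C_\ast$ with $a_1+6a_2\geq 7$, which can never equal $-C_\ast$ or $-6C_\ast$. The paper itself performs no non-resonance verification in this example, so supplying the check on the eigenvalues of $Dg^{\rm ext}_\ast$ (rather than of $A^{\rm ext}$) would actually strengthen the argument; supplying it in the form you wrote would appear to refute it.
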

This result is a re-interpretation of blow-up descriptions by means of the correspondence between the balance law and dynamics at infinity.
\begin{rem}
For equilibria on $M$, we directly have $r_{\bf {Y}_0} = 17^{-1/12}$.
On the other hand, it is calculated in \cite{Mat2025_NHIM} that
\begin{equation*}
G(\chi_\ast, {\bf x}_\ast) = \frac{1}{2}x_{\ast, 1}^5x_{\ast, 2} + 2x_{\ast, 1}^2 x_{\ast, 2}^3,
\end{equation*}
which is $17^{-1/12} \equiv C_\ast$ at any points on $M$.
It is also confirmed in \cite{Mat2025_NHIM} that the eigenvalues of the Jacobian matrix $Dg^{\rm ext}$ at $(\chi, {\bf x}_\ast)$
\end{rem}

\subsection{A system associated with self-similarity}

The next example we shall consider is the following system:
\begin{equation}
\label{ss-original}
( u^{m-1}u' )' + \beta \chi u' + \alpha u = 0,\quad ' = \frac{d}{d\chi},\quad \chi \in \mathbb{R},
\end{equation}
where $\alpha, \beta \in \mathbb{R}$ are parameters. 
The system (\ref{ss-original}) originates from the diffusion equation (e.g., \cite{FV2003})
\begin{equation*}
U_t = (U^{m-1} U_x)_x.
\end{equation*}
The parameter $m$ controls the strength of nonlinear diffusion.
Paying our attention to {\em self-similar solutions} of the form\footnote{
The ansatz (\ref{backward-ss}) represents the {\em backward} self-similarity.
Although there are another types of self-similarity; {\em forward} and {\em exponential-type} ones, the governing equation becomes the same one, (\ref{ss-original}).
} 
\begin{equation}
\label{backward-ss}
U(t,x) = (T-t)^\alpha u(x(T-t)^{\beta}),\quad t < T
\end{equation}
for some $T>0$, the system is reduced to (\ref{ss-original}).
We concentrate on the very fast diffusion case, $m < 0$, and assume that $\beta < 0$.
In what follows we investigate blow-up solutions following our proposed machinery.
First rewrite (\ref{ss-original}) as the first order extended autonomous system:
\begin{equation}
\label{ss}
\begin{cases}
\chi' = 1, & \\
u' = u^{1-m} v, & \\
v' = -\beta \chi u^{1-m}v - \alpha u, & \\
\end{cases}\quad ' = \frac{d}{dt}.
\end{equation}
Direct calculations yield the following observation.

\begin{lem}
The system (\ref{ss}) is asymptotically quasi-homogeneous of type $(0,1,1)$ and order $k+1 = 2-m$.
\end{lem}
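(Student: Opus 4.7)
The plan is to verify the definition of asymptotic quasi-homogeneity directly by inspection. I will exhibit the candidate quasi-homogeneous leading part and show each residual satisfies the required $o(s^{k+\alpha_i})$ decay.

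First I would identify the leading quasi-homogeneous vector field by looking at the scaling weights $\alpha = (\alpha_0,\alpha_1,\alpha_2) = (0,1,1)$. Substituting $s^{\Lambda_\alpha}(\chi,u,v)^T = (\chi, su, sv)^T$ into each component of the right-hand side of (\ref{ss}): the term $u^{1-m}v$ scales as $s^{2-m}u^{1-m}v$, the term $\beta\chi u^{1-m}v$ scales as $s^{2-m}\beta\chi u^{1-m}v$, while the term $\alpha u$ scales only as $s\cdot \alpha u$, and the constant $1$ in the $\chi$-equation does not scale at all. This identifies the candidate
\begin{equation*}
f_{\alpha,k}(\chi,u,v) = \bigl(0,\; u^{1-m}v,\; -\beta\chi u^{1-m}v\bigr)^T,
\end{equation*}
with $k+1 = 2-m$, so $k = 1-m$.

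Next I would verify that $f_{\alpha,k}$ is honestly quasi-homogeneous of type $(0,1,1)$ and order $2-m$: for each nontrivial component the computation above shows $f_{i;\alpha,k}(s^{\Lambda_\alpha}{\bf x}) = s^{k+\alpha_i}f_{i;\alpha,k}({\bf x})$ with $k+\alpha_i = 2-m$ for $i=1,2$, while the zeroth component $f_{0;\alpha,k}\equiv 0$ is trivially quasi-homogeneous of any order (cf.\ Remark \ref{rem-QH-0th}).

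Finally I would check the asymptotic residual condition (\ref{residual}) componentwise. For $i=0$ the residual is the constant $1$, and we need $1 = o(s^{k+\alpha_0}) = o(s^{1-m})$ as $s\to+\infty$; this holds because $m<0$ gives $1-m>1>0$. For $i=1$ the residual is identically zero. For $i=2$ the residual is $-\alpha u$, and we need $-\alpha s u = o(s^{2-m})$, i.e.\ $s = o(s^{2-m})$, which again holds for $m<0$ (indeed whenever $m<1$). The required uniformity on the set $\{u^2+v^2 = 1\}$ with $\chi$ in an arbitrary compact set is immediate since all estimates are polynomial. The only mild subtlety to flag is that the definition demands the exponents $\alpha_i$ be nonnegative integers and that $f$ be defined on an admissible domain with respect to $\alpha$; here $\alpha = (0,1,1)$ and the admissible domain is $\mathbb{R}\times\tilde U$ with $\tilde U \subset \mathbb{R}^2$ an open set avoiding $u=0$ so that the factor $u^{1-m}$ (with $m<0$) makes sense. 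This is the one genuinely non-routine point, and I would handle it by restricting attention to $\tilde U \subset \{u>0\}$, consistent with the positivity needed for the self-similar ansatz (\ref{backward-ss}).
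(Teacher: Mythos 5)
Your verification is correct and matches what the paper intends: the paper states this lemma with only the remark that ``direct calculations yield the following observation,'' and your componentwise scaling computation (leading part $(0,\,u^{1-m}v,\,-\beta\chi u^{1-m}v)^T$ of order $2-m$, with residuals $1$ and $-\alpha u$ of strictly lower order since $m<0$) is exactly that direct calculation. Your added remark about restricting to an admissible domain with $u>0$ so that $u^{1-m}$ is well defined is a sensible clarification consistent with the paper's use of admissible domains and the positivity of the roots of the balance law.
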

A blow-up description of this system is discussed in \cite{Mat2025_NHIM} through a different embedding.
We revisit this system here through both methodologies introduced before and unravel their correspondence.
First, the balance law is
\begin{equation}
\label{ss-balance}
\frac{1}{1-m}U = U^{1-m}V,\quad \frac{1}{1-m}V = -\beta \chi U^{1-m}V,\quad \chi \in \mathbb{R}.
\end{equation}
The nontrivial solution must satisfy
\begin{equation*}
\frac{1}{1-m} = -\beta \chi U^{1-m}\quad \Leftrightarrow \quad U = (-(1-m)\beta \chi)^{-1/(1-m)},\quad \text{provided}\quad \chi \geq 0
\end{equation*}
because $\beta < 0$ and $m < 0$ are assumed.
As a result, we have the following family of roots of (\ref{ss-balance}):
\begin{align}
\notag
(\chi, U, V) &= (\chi, U_\ast(\chi), V_\ast(\chi))\\
\label{ss-balance-root}
&\equiv (\chi,  (-(1-m)\beta \chi)^{-1/(1-m)},  (-\beta \chi)^{-m/(1-m)}(1-m)^{-1/(1-m)} ),\quad \chi \geq 0.
\end{align}

\begin{lem}
Under the assumptions $\beta < 0$ and $m < 0$. the family of roots (\ref{ss-balance-root}) corresponds to an invariant manifold
\begin{equation*}
M = \left\{ \left. (\chi, {\bf x}_\ast(\chi)) \equiv \left(\chi,\, \frac{1}{\sqrt{( 1 + (-\beta \chi)^2 )}},\, \frac{ -\beta \chi }{\sqrt{( 1 + (-\beta \chi)^2 )} }\right) \right| \chi\geq 0 \right\}.
\end{equation*}
on the horizon $\mathcal{E} = \{ x_1^2 + x_2^2 = 1\}$ for the desingularized vector field $g^{\rm ext}$ associated with (\ref{ss}).
\par
The tangent vector along $M_{\mathbb{R}_{>0}}$, namely the restriction of $M$ over the open subinterval $\{\chi > 0\}$, at each point is
\begin{equation}
\label{ss-tangent}
\left(1,\, \frac{-\beta^2 \chi}{( 1 + (-\beta \chi)^2 )^{3/2}},\, \frac{-\beta}{( 1 + (-\beta \chi)^2 )^{3/2}} \right)^T.
\end{equation}
\end{lem}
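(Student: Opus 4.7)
The plan is to apply Theorem \ref{thm-balance-1to1}, which transports every nontrivial root of the balance law to an equilibrium on the horizon. With the present type $\alpha = (0,1,1)$ we have $c = 1$, $\beta_1 = \beta_2 = 1$, $p_\alpha({\bf x}) = \sqrt{x_1^2 + x_2^2}$ and $\Lambda_\alpha = I_2$, so the recipe (\ref{C-to-x}) reduces to $x_{\ast, i} = Y_{0,i}/r_{{\bf Y}_0}$ with $r_{{\bf Y}_0} = \sqrt{U_\ast(\chi)^2 + V_\ast(\chi)^2}$.

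The key algebraic step will be the identity $V_\ast(\chi) = (-\beta\chi)\, U_\ast(\chi)$, which falls out directly on matching the exponents in the definitions (\ref{ss-balance-root}). This yields $r_{{\bf Y}_0} = U_\ast(\chi)\sqrt{1+(-\beta\chi)^2}$ (positive and real, since $\beta<0$ and $m<0$ force $U_\ast > 0$ for $\chi > 0$), and substituting back into (\ref{C-to-x}) produces the claimed formula for ${\bf x}_\ast(\chi)$ valid on $\chi > 0$. The inclusion $M \subset \mathcal{E}$ is then transparent from $x_{\ast,1}^2 + x_{\ast,2}^2 \equiv 1$.

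The boundary point $\chi = 0$ requires a separate direct check, since $U_\ast$ and $V_\ast$ are singular there and one expects $C_\ast = 0$ at $(0, 1, 0)$, placing it outside the scope of Theorem \ref{thm-balance-1to1}. I would verify that $(0, 1, 0)$ is an equilibrium of $g^{\rm ext}$ by inspection: the $t$-component $g_0$ vanishes because $\tilde f_0 = (1-p_\alpha^{2c})^k$ is zero on $\mathcal{E}$, while on $\mathcal{E}$ the identities $\tilde f_1 = x_1^{1-m}x_2$ and $\tilde f_2 = -\beta\chi\, x_1^{1-m}x_2$ (the residual $-\alpha u$ in $f_2$ is lower order and scales out by Assumption \ref{ass-f-1}, which holds because $m<0$) both vanish at $(0, 1, 0)$, so $G = 0$ there and each ${\bf x}$-component of $g^{\rm ext}$ is zero. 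Once this boundary case is handled, invariance of $M$ is automatic, because $M$ consists pointwise of equilibria of $g^{\rm ext}$.

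Finally, the tangent vector (\ref{ss-tangent}) along $M_{\mathbb{R}_{>0}}$ is obtained by direct differentiation of $\chi \mapsto (\chi,\, (1+\beta^2\chi^2)^{-1/2},\, -\beta\chi\,(1+\beta^2\chi^2)^{-1/2})$; in the $x_{\ast,2}$-derivative the two $\beta^3\chi^2$ contributions cancel, leaving the desired $-\beta(1+\beta^2\chi^2)^{-3/2}$. The main obstacle I foresee is the algebraic bookkeeping bridging the fractional-exponent formulas in (\ref{ss-balance-root}) to the smooth formula for ${\bf x}_\ast(\chi)$, and the singular boundary behaviour at $\chi = 0$ — both resolved once the identity $V_\ast = -\beta\chi\, U_\ast$ and the direct verification at the origin are in hand.
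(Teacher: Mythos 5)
Your proposal is correct and follows essentially the same route as the paper: it invokes Theorem \ref{thm-balance-1to1}, computes $r_{{\bf Y}_0} = U_\ast(\chi)\sqrt{1+(-\beta\chi)^2}$ (your identity $V_\ast = (-\beta\chi)U_\ast$ is exactly the simplification the paper performs when factoring $r_{{\bf Y}_0}^2$), and then obtains the tangent vector by direct differentiation of the resulting curve. Your separate verification at the endpoint $\chi=0$, where the root formula degenerates and Theorem \ref{thm-balance-1to1} does not strictly apply, is a small extra care that the paper's own proof omits.
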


\begin{proof}
The first statement follows from the correspondence of roots; Theorem \ref{thm-balance-1to1}, through the constant $r_{{\bf Y}_0}$ obtained as follows:
\begin{align}
\notag
r_{{\bf Y}_0}^2 &= (-(1-m)\beta \chi)^{-2/(1-m)} +  (-\beta \chi)^{-2m/(1-m)}(1-m)^{-2/(1-m)}  \\
\notag
	&= (1-m)^{-2/(1-m)}(-\beta \chi)^{-2/(1-m)}\left( 1 + (-\beta \chi)^{-2m/(1-m) + 2/(1-m)} \right)\\
\label{ss-rY0}
	&= (1-m)^{-2/(1-m)}(-\beta \chi)^{-2/(1-m)}\left( 1 + (-\beta \chi)^2 \right).
\end{align}
Regarding the form of $M$ as the curve parameterized by $\chi$, the tangent vector is obtained by differentiating the form by $\chi$: 
\begin{align*}
\frac{d}{d\chi} \frac{1}{\sqrt{( 1 + (-\beta \chi)^2 )}} 
	&= \frac{-\beta^2 \chi}{( 1 + (-\beta \chi)^2 )^{3/2}},\quad
\frac{d}{d\chi} \frac{\chi}{\sqrt{( 1 + (-\beta \chi)^2 )}} 
	=   \frac{ 1}{( 1 + (-\beta \chi)^2 )^{3/2}}.
\end{align*}
\end{proof}
Now several objects associated with the desingularized vector field $g^{\rm ext}$ and $M$ are derived.
The constant $C_\ast$ at $(\chi, {\bf x}_\ast(\chi))$ is
\begin{align*}
C_\ast &= C_\ast(\chi) \equiv G(\chi, {\bf x}_\ast(\chi)) \\
	&= x_{\ast, 1} (x_{\ast, 1}^{1-m} x_{\ast, 2}) + x_{\ast, 2} (-\beta \chi x_{\ast, 1}^{1-m} x_{\ast, 2})\\
	&= (x_{\ast, 1} - \beta \chi x_{\ast, 2}) (x_{\ast, 1}^{1-m} x_{\ast, 2}) \\
	&= \left\{  \frac{1}{\sqrt{( 1 + (-\beta \chi)^2 )}} +  \frac{(-\beta \chi)^2 }{\sqrt{( 1 + (-\beta \chi)^2 )}} \right\} \left\{ \frac{ 1 }{ ( 1 + (-\beta \chi)^2 )^{(1-m)/2} }  \frac{ -\beta \chi }{\sqrt{( 1 + (-\beta \chi)^2 )} } \right\}\\
	&= ( 1 + (-\beta \chi)^2 )^{-\frac{1-m}{2}} (-\beta \chi)
\end{align*}
and the constant $r_{{\bf x}_\ast} = r_{{\bf x}_\ast}(\chi)$ is
\begin{align*}
r_{{\bf x}_\ast}(\chi) &\equiv \left((1-m)C_\ast(\chi)\right)^{-1/(1-m)}\\
	&=  \left\{(1-m)( 1 + (-\beta \chi)^2 )^{-\frac{1-m}{2}} (-\beta \chi) \right\}^{-1/(1-m)}\\
	&= \{(1-m)(-\beta \chi)\}^{-1/(1-m)}  \sqrt{1 + (-\beta \chi)^2}\\
	&= r_{{\bf Y}_0},
\end{align*} 
where $r_{{\bf Y}_0}$ is the constant obtained in (\ref{ss-rY0}), which is compatible with arguments in Section \ref{section-correspondence}.
\par
The projection $P_\ast^{\rm ext} \equiv {\bf v}_{\ast, \alpha}^{\rm ext} (D p_\alpha)_\ast^T$ and $I - P_\ast^{\rm ext}$ are
\begin{align*}
P_\ast^{\rm ext} &= \begin{pmatrix}
0 \\  \frac{1}{\sqrt{( 1 + (-\beta \chi)^2 )}} \\  \frac{-\beta \chi}{\sqrt{( 1 + (-\beta \chi)^2 )}}
\end{pmatrix}\begin{pmatrix}
0 & \frac{1}{\sqrt{( 1 + (-\beta \chi)^2 )}} &  \frac{-\beta \chi}{\sqrt{( 1 + (-\beta \chi)^2 )}}
\end{pmatrix}
	= \frac{1}{1 + (-\beta \chi)^2 }\begin{pmatrix}
0 & 0 & 0 \\
0 & 1 & -\beta \chi \\
0 & -\beta \chi & (-\beta \chi)^2 \\
\end{pmatrix},\\
I - P_\ast^{\rm ext} &= \frac{1}{1 + (-\beta \chi)^2 }\begin{pmatrix}
1+(-\beta \chi)^2 & 0 & 0 \\
0 & (-\beta \chi)^2 & \beta \chi \\
0 & \beta \chi & 1 \\
\end{pmatrix}.
\end{align*}
\par
\bigskip
The blow-up power determining matrix at the root is
\begin{align*}
A^{\rm ext} &=\begin{pmatrix}
0 & 0 & 0\\
0 & -\frac{1}{1-m} + (1-m)U^{-m}V & U^{1-m}\\
-\beta U^{1-m}V & -(1-m)\beta \chi U^{-m}V & -\frac{1}{1-m} -\beta \chi U^{1-m}
\end{pmatrix}_{(\chi, U, V) = (\chi, U_\ast(\chi), V_\ast(\chi) )}\\
&= 
\begin{pmatrix}
0 & 0 & 0\\
0 & -\frac{1}{1-m} + 1 & (-(1-m)\beta \chi)^{-1} \\
((1-m) \chi)^{-1}  (-\beta \chi)^{-m/(1-m)}(1-m)^{-1/(1-m)}  & -\beta \chi & 0
\end{pmatrix}
\end{align*}
and eigenvalues are
\begin{equation*}
0, \quad 1,\quad -\frac{1}{1-m}.
\end{equation*}
The eigenvector of $A^{\rm ext}$ associated with the eigenvalue $0$ is constructed through the unique solution of the linear system
\begin{equation*}
\begin{pmatrix}
-\frac{1}{1-m} + 1 & \{(1-m)(-\beta \chi)\}^{-1} \\
-\beta \chi & 0
\end{pmatrix}\begin{pmatrix}
\tilde v_1 \\ \tilde v_2
\end{pmatrix} = -\begin{pmatrix}
0 \\ ((1-m) \chi)^{-1}  (-\beta \chi)^{-m/(1-m)}(1-m)^{-1/(1-m)}
\end{pmatrix},
\end{equation*}
according to Lemma \ref{lem-eigen-extend-1}.
The solution is
\begin{align*}
\tilde v_1 &= -((1-m) \chi)^{-1}   \{(1-m)(-\beta \chi)\}^{-1/(1-m)},\\
\tilde v_2 &= \frac{m}{1-m} (-\beta) \{(1-m)(-\beta \chi)\}^{-1/(1-m)}
\end{align*}
and the eigenvector is $(1, \tilde v_1, \tilde v_2)^T$.
In particular,
\begin{align*}
(I-P_\ast^{\rm ext}) r_{{\bf x}_\ast}^{-1} \begin{pmatrix}
1\\ \tilde v_1\\ \tilde v_2
\end{pmatrix}
& = \frac{1}{ 1 + (-\beta \chi)^2 }\begin{pmatrix}
1+(-\beta \chi)^2 & 0 & 0 \\
0 & (-\beta \chi)^2 & \beta \chi \\
0 & \beta \chi & 1 \\
\end{pmatrix}\\
& \quad \cdot
\begin{pmatrix}
1 & 0 & 0\\
0 & \frac{1}{ \{(1-m)(-\beta \chi)\}^{-1/(1-m)}  \sqrt{1 + (-\beta \chi)^2} } & 0\\
0 & 0 & \frac{1}{ \{(1-m)(-\beta \chi)\}^{-1/(1-m)}  \sqrt{1 + (-\beta \chi)^2} } \\
\end{pmatrix}\\
&\quad \cdot
\begin{pmatrix}
1\\ 
-((1-m) \chi)^{-1}   \{(1-m)(-\beta \chi)\}^{-1/(1-m)} \\ 
\frac{m}{1-m} (-\beta)  \{(1-m)(-\beta \chi)\}^{-1/(1-m)}
\end{pmatrix}\\
&= \frac{1}{ 1 + (-\beta \chi)^2 }\begin{pmatrix}
1+(-\beta \chi)^2 & 0 & 0 \\
0 & (-\beta \chi)^2 & \beta \chi \\
0 & \beta \chi & 1 \\
\end{pmatrix}
\begin{pmatrix}
1\\ 
\frac{ -\chi^{-1} }{ (1-m) \sqrt{1 + (-\beta \chi)^2} }  \\ 
\frac{m(-\beta) }{(1-m) \sqrt{1 + (-\beta \chi)^2}} 
\end{pmatrix}\\
&= \begin{pmatrix}
1& 
\frac{ -\beta^2 \chi }{ (1 + (-\beta \chi)^2)^{3/2}} &  
\frac{ -\beta }{(1 + (-\beta \chi)^2)^{3/2} } 
\end{pmatrix}^T,
\end{align*}
which is exactly the tangent vector of $M_{\mathbb{R}_{>0}}$ given in (\ref{ss-tangent}).
Note that our present requirement $m<0$ guarantees that Assumption \ref{ass-f-1} holds true. 
Indeed, the quasi-homogeneous part $f^{\rm ext}_{(0,1,1); 1-m}$ and the residual part $f^{\rm ext}_{\rm res}$ of (\ref{ss}) is
\begin{equation*}
f^{\rm ext}_{(0,1,1); 1-m} = \begin{pmatrix}
0  \\
u^{1-m} v  \\
-\beta \chi u^{1-m}v \\
\end{pmatrix},\quad f^{\rm ext}_{\rm res} = \begin{pmatrix}
1 \\
0 \\
- \alpha u \\
\end{pmatrix},
\end{equation*}
respectively. 
The term $f^{\rm ext}_{0;{\rm res}} = 0$ has the order $0$ less than $k + \alpha_0 - 1 = -m > 0$, while  $f^{\rm ext}_{2;{\rm res}} = - \alpha u$ the order $1$ less than $k + \alpha_2 - 1 = 1-m > 1$ (cf. Remark \ref{rem-order-typical}).
Therefore the above observation is compatible with Theorem \ref{thm-blow-up-estr}.
Through the restriction of $M$ to $M_I \equiv M\cap \{\chi \in I\}$ with a compact interval $I\subset \mathbb{R}_{>0}$, we have the following result.
See \cite{Mat2025_NHIM}, Section 6.3.
\begin{thm}
The system (\ref{ss}) with $m, \beta < 0$ admits a solution $(u(\chi), v(\chi))$ blowing up at a finite time $\chi = \chi_{\max} > 0$ with the following asymptotic behavior
\begin{align}
\notag
u(\chi) &\sim (-(1-m)\beta \chi_{\max})^{-1/(1-m)}(\chi_{\max} - \chi)^{-1/(1-m)},\\ 
\label{ss-blow-up}
v(\chi) &\sim (-\beta \chi_{\max})^{-m/(1-m)}(1-m)^{-1/(1-m)}(\chi_{\max} - \chi)^{-1/(1-m)}
\end{align}
as $\chi \to \chi_{\max}$.
\end{thm}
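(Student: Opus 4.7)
The plan is to apply Theorem \ref{thm-existence-blow-up} directly to the nontrivial family of roots (\ref{ss-balance-root}) of the balance law. For any fixed $\chi_\ast > 0$ the associated root $(\chi_\ast, U_\ast(\chi_\ast), V_\ast(\chi_\ast))$ has nonzero $(u,v)$-components, so the nondegeneracy hypothesis of the theorem is met; it remains only to check the spectral condition and Assumption \ref{ass-f-1}, and then to identify the coefficients that appear in the conclusion.

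First I would observe that all remaining hypotheses have, in effect, already been verified in the preceding discussion. Because $m<0$ gives $k = 1-m > 1$, the order bound in Assumption \ref{ass-f-1} reduces to a decomposition check on $f$: the entries of $f^{\rm ext}_{\rm res}=(1,0,-\alpha u)^T$ have orders $0$ and $1$, both strictly less than the thresholds $k+\alpha_0-1 = -m$ and $k+\alpha_2-1 = 1-m$, so Remark \ref{rem-order-typical} yields (\ref{fi-order}). The spectral computation immediately above the theorem shows ${\rm Spec}(A^{\rm ext}) = \{0,\, 1,\, -1/(1-m)\}$, with the $0$-eigenvalue aligned with the $\chi$-direction; hence in the block form (\ref{Aext-sub}) the submatrix $A$ has spectrum $\{1,\, -1/(1-m)\}$, both real and nonzero, so that ${\rm Spec}(A)\cap i\mathbb{R} = \emptyset$.

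With these checks in hand, Theorem \ref{thm-existence-blow-up} produces a blow-up solution $(u(\chi), v(\chi))$ of (\ref{ss}) with blow-up time $\chi_{\max}$ near $\chi_\ast$. The asymptotic part of the same theorem yields $u(\chi) \sim Y_{0,1}(\chi_{\max}-\chi)^{-1/(1-m)}$ and $v(\chi) \sim Y_{0,2}(\chi_{\max}-\chi)^{-1/(1-m)}$ with $(Y_{0,1}, Y_{0,2}) = (U_\ast(\chi_{\max}), V_\ast(\chi_{\max}))$; substituting into (\ref{ss-balance-root}) reproduces verbatim the coefficients in the claimed asymptotic formulas.

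The main obstacle I foresee is the non-resonance condition that is needed to justify the asymptotic part of Theorem \ref{thm-existence-blow-up}. With the nontrivial eigenvalues $\{1,\, -1/(1-m)\}$ of $A^{\rm ext}$, resonance relations of the form $a_1\cdot 1 - a_2/(1-m) \in \{1,\, -1/(1-m)\}$ with $a_1, a_2 \in \mathbb{Z}_{>0}$ and $a_1 + a_2 \geq 2$ only hold on a countable exceptional set of $m$; for all other $m < 0$ the condition is valid and the conclusion is immediate, while for exceptional $m$ the coefficients would need to be justified by the parameter-by-parameter analysis of the stable foliation in \cite{Mat2025_NHIM}, Section 6.3.
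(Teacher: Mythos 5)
Your proposal is correct and follows essentially the same route as the paper: verify Assumption \ref{ass-f-1} via the order count on $f^{\rm ext}_{\rm res}=(1,0,-\alpha u)^T$, read off ${\rm Spec}(A)=\{1,\,-1/(1-m)\}\subset\mathbb{R}\setminus\{0\}$ from the computed blow-up power-determining matrix, and invoke Theorem \ref{thm-existence-blow-up} on the root family (\ref{ss-balance-root}) to obtain the blow-up with the stated coefficients. Your explicit flagging of the non-resonance condition needed for the asymptotic (coefficient) part of the conclusion is in fact slightly more careful than the paper, which leaves that verification implicit and defers to \cite{Mat2025_NHIM}, Section 6.3.
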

Several remarks about invariant manifolds on the horizon are mentioned in \cite{Mat2025_NHIM}.

\subsection{A Hamiltonian system with time-variant driver}
\label{section-ex-Hamiltonian}
The last example addresses the blow-up behavior of a nonautonomous system
\begin{equation}
\label{ODE_WWL}
u_j'' + u_j^{2n+1} + \frac{i}{u_j}u_1^i \cdots u_m^i a(t) = 0,\quad a\in C^1(\mathbb{R}),\quad j=1,\ldots, m,
\end{equation}
admitting a Hamiltonian $H$ given by
\begin{align*}
H(t, {\bf u}, {\bf v}) &= \sum_{j=1}^m h_j(u_j, v_j) + a(t)\prod_{j=1}^n u_j^i,\quad 
h_j (u_j, v_j) = \frac{1}{2}v_j^2  + \frac{1}{2n+2}u_j^{2n+2},
\end{align*}
where\footnote{
In preceding studies such as \cite{WWL2012}, the function $a(t)$ was assumed to be time-periodic.
On the other hand, it turns out through the following arguments that it is {\em not} required to describe (type-I, stationary, nonautonomous) blow-up solutions of (\ref{ODE_WWL}).
} ${\bf u}=(u_1,\ldots, u_n)^T$, ${\bf v}=(v_1,\ldots, v_n)^T$ with ${\bf v}' = {\bf u}$.
In particular, (\ref{ODE_WWL}) has the form
\begin{equation*}
{\bf u}' = \frac{\partial H}{\partial {\bf v}},\quad {\bf v}' = -\frac{\partial H}{\partial {\bf u}}.
\end{equation*}
%
Several preceding works (e.g. \cite{WWL2012}) reported that (\ref{ODE_WWL}) admitted blow-up solutions {\em provided} that $(2n+2)/m < i$.
For example, if $n=2$ and $m=2$, then $i > 3$ is required.
The present issue is to investigate a nature of blow-ups in the case $(2n+2)/m = i$, which is excluded from arguments in \cite{WWL2012}.

\subsubsection{The case $k>1$}

First we study the case
\begin{equation*}
n=2,\quad m=2\quad \text{ and }\quad i=3.
\end{equation*}
The system (\ref{ODE_WWL}) as the first order system is written as
\begin{align}
\notag
t'&= 1,\\
\notag
u_1' &= v_1,\\
\label{WWL_system}
v_1' &= -u_1^5 - 3 u_1^2u_2^3 a(t),\\
\notag
u_2' &= v_2,\\
\notag
v_2' &= -u_2^5 - 3 u_1^3 u_2^2 a(t).
\end{align}
First we easily have the following observation.

\begin{lem}
Aligning the state variable $(t, u_1, v_1, u_2, v_2)$, the system (\ref{WWL_system}) is asymptotically  quasi-homogeneous of type $\alpha = (0,1,3,1,3)$ and order $k+1 = 3$.
\end{lem}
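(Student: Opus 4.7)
The plan is to exhibit an explicit quasi-homogeneous vector field $f^{\rm ext}_{\alpha, k}$ of type $\alpha = (0,1,3,1,3)$ and order $3$ (so $k = 2$) and then verify that the remainder $f^{\rm ext} - f^{\rm ext}_{\alpha, k}$ satisfies the decay estimate (\ref{residual}). Because the nonlinearities of (\ref{WWL_system}) are polynomial in $(u_1, v_1, u_2, v_2)$ with only $t$-dependent coefficients, and because the $t$-direction carries weight $\alpha_0 = 0$ (cf.\ Remark \ref{rem-QH-0th}), the verification reduces to a direct bookkeeping of scaling exponents, with no analytic work beyond observing that $a \in C^1(\mathbb{R})$ is bounded on compact $t$-intervals.

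First I would record that $s^{\Lambda_\alpha}(t, u_1, v_1, u_2, v_2) = (t, s u_1, s^3 v_1, s u_2, s^3 v_2)$ and list the required orders $k+\alpha_i = 2, 3, 5, 3, 5$ for $i = 0, 1, 2, 3, 4$. For the $u_j$-equations ($i=1,3$), one has $v_j \mapsto s^3 v_j$, matching $s^{k+\alpha_i}$ exactly, so I set $f^{\rm ext}_{i;\alpha,k} = v_j$ with zero remainder. For the $v_j$-equations ($i=2,4$), each of $u_j^5$ and the cross term $u_1^{\ell_1} u_2^{\ell_2} a(t)$ with $\ell_1+\ell_2=5$ scales as $s^5$; the factor $a(t)$ is invariant under $s^{\Lambda_\alpha}$ precisely because $\alpha_0 = 0$, so I take $f^{\rm ext}_{i;\alpha,k}$ equal to the full right-hand side of the $v_j$-equation itself, again with zero remainder. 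Finally, for the $t$-component, I set $f^{\rm ext}_{0;\alpha,k} \equiv 0$, sending the constant $f^{\rm ext}_0 \equiv 1$ into the residual; the bound $1 = o(s^2)$ is then trivial as $s \to +\infty$.

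Uniformity of (\ref{residual}) on $\{\sum_{j \in I_\alpha} x_j^2 = 1,\ t \in \tilde K\}$ for any compact $\tilde K \subset \mathbb{R}$ follows immediately because the only nonzero residual is the constant $1$, independent of both $(u_1, v_1, u_2, v_2)$ and $t$. There is no real obstacle here; the exponent arithmetic matches by design, and the framework of Section \ref{sec:QH} with $\alpha_0 = 0$ is precisely what permits treating the continuous driver $a(t)$ as part of the quasi-homogeneous component rather than as a perturbation.
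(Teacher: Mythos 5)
Your verification is correct and is exactly the direct scaling check the paper has in mind (the lemma is stated there as an immediate observation with the proof omitted): the weights $(0,1,3,1,3)$ with $k=2$ make every monomial on the right-hand side scale as $s^{k+\alpha_i}$, the driver $a(t)$ is invariant because $\alpha_0=0$, and the only residual term is the constant $1$ in the $t$-component, which is trivially $o(s^{2})$ uniformly. Nothing further is needed.
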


Next, the balance law is considered to calculate the roots:
\begin{equation}
\label{balance_WWL_system}
\frac{1}{2}U_1 = V_1,\quad \frac{3}{2}V_1 = -U_1^5 - 3 U_1^2U_2^3 a(t),\quad \frac{1}{2}U_2 = V_2,\quad \frac{3}{2}V_2 = -U_2^5 - 3 U_1^3U_2^2 a(t).
\end{equation}
where the time variable $t$ is regarded as a parameter.
Assuming that $U_1, U_2 \not = 0$, the above equations are reduced to the following:
\begin{equation*}
\frac{3}{4} = -U_1^4 - 3 U_1U_2^3 a(t),\quad \frac{3}{4} = -U_2^4 - 3 U_1^3U_2 a(t),
\end{equation*}
in particular
\begin{equation*}
(U_1^2 + U_2^2)(U_1^2 - U_2^2) = 3U_1U_2(U_1^2 - U_2^2)a(t),
\end{equation*}
which yield $U_1^2 = U_2^2$.
Substituting this identity into the above equations, we have
\begin{equation*}
\frac{3}{4} = -U_1^4 \mp 3 U_1^4 a\quad \Rightarrow \quad U_1^4 = \frac{3}{4(-1 \mp 3a(t))},
\end{equation*}
where $\mp$ in the denominator of $U_1^4$ has the following correspondence: $-1 - 3a(t)$ is chosen when $U_1 = U_2$, while $-1 + 3a(t)$ is chosen when $U_1 = -U_2$.
In the above identity, {\em $a(t) < -1/3$ is required when $U_1 = U_2$, while $a(t) > 1/3$ is required when $U_1 = -U_2$}.
As a summary, we have the following observation.
\begin{prop}
Roots of the balance law (\ref{balance_WWL_system}) are the following:
\begin{align}
\label{roots_balance_WWL_system_1}
(U_1, V_1, U_2, V_2) = &\left(\sqrt[4]{\frac{3}{4(-1 - 3a(t))}}, \frac{1}{2}\sqrt[4]{\frac{3}{4(-1 - 3a(t))}}, \sqrt[4]{\frac{3}{4(-1 - 3a(t))}}, \frac{1}{2}\sqrt[4]{\frac{3}{4(-1 - 3a(t))}}\right)\\
\label{roots_balance_WWL_system_2}
	\text{ and }\quad  &\left(\sqrt[4]{\frac{3}{4(-1 + 3a(t))}}, \frac{1}{2}\sqrt[4]{\frac{3}{4(-1 + 3a(t))}}, -\sqrt[4]{\frac{3}{4(-1 + 3a(t))}}, -\frac{1}{2}\sqrt[4]{\frac{3}{4(-1 + 3a(t))}}\right)
\end{align}
provided that $a(t) < -1/3$ when $U_1 = U_2$, namely in (\ref{roots_balance_WWL_system_1}), while $a(t) > 1/3$ when $U_1 = -U_2$, namely in (\ref{roots_balance_WWL_system_2}).
For each point $(U_1, V_1, U_2, V_2)$ given in (\ref{roots_balance_WWL_system_1}) and (\ref{roots_balance_WWL_system_2}), $(-U_1, -V_1, -U_2, -V_2)$ is also a roof of (\ref{balance_WWL_system}).
\end{prop}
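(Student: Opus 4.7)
The plan is to follow the reduction already outlined in the discussion preceding the proposition and to close the remaining gaps. First, the first and third equations of (\ref{balance_WWL_system}) give $V_j = U_j/2$ for $j=1,2$, so any root is determined by its $(U_1, U_2)$-part. Substituting these into the second and fourth equations reduces the problem, under $U_1,U_2 \neq 0$, to
\begin{equation*}
\tfrac{3}{4} = -U_1^4 - 3 U_1 U_2^3 a(t),\qquad \tfrac{3}{4} = -U_2^4 - 3 U_1^3 U_2 a(t).
\end{equation*}
The degenerate branches are easy: if $U_1 = 0$ then $V_1 = 0$, and the remaining $(U_2,V_2)$-equations collapse to $(3/4)U_2 = -U_2^5$, whose only real solution is $U_2 = 0$; the case $U_2=0$ is symmetric. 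Hence every nontrivial real root has $U_1 U_2 \neq 0$.

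Next I would subtract the two reduced equations to obtain
\begin{equation*}
(U_1^2+U_2^2)(U_1^2-U_2^2) = 3 U_1 U_2 (U_1^2 - U_2^2) a(t),
\end{equation*}
splitting into cases (A) $U_1^2 = U_2^2$ or (B) $U_1^2 + U_2^2 = 3 U_1 U_2 a(t)$. The main (minor) obstacle I anticipate is ruling out (B), which the discussion preceding the proposition glosses over. To dispose of (B), I would instead \emph{add} the two reduced equations, obtaining
\begin{equation*}
\tfrac{3}{2} = -(U_1^4+U_2^4) - 3 a(t) U_1 U_2 (U_1^2+U_2^2),
\end{equation*}
and then substitute the hypothesis $3 U_1 U_2 a(t) = U_1^2+U_2^2$ to rewrite the right-hand side as $-(U_1^4+U_2^4)-(U_1^2+U_2^2)^2$, which is strictly negative for real $(U_1,U_2)\neq(0,0)$ and cannot equal $3/2$; only case (A) survives.

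Finally, substituting each of $U_1 = \pm U_2$ into $\tfrac{3}{4} = -U_1^4 - 3 U_1 U_2^3 a(t)$ gives $U_1^4 = 3/(4(-1 \mp 3 a(t)))$, real solutions of which require $a(t) < -1/3$ for the choice $U_1 = U_2$ and $a(t) > 1/3$ for $U_1 = -U_2$; extracting the positive fourth root and recovering $V_j = U_j/2$ produces the explicit forms (\ref{roots_balance_WWL_system_1}) and (\ref{roots_balance_WWL_system_2}). The $\mathbb{Z}_2$ sign symmetry $(U_j,V_j)\mapsto(-U_j,-V_j)$ is then a direct verification: the left-hand sides of (\ref{balance_WWL_system}) are linear in $V_j$, while the right-hand sides are homogeneous of total odd degree in $(U_1,U_2)$, so both sides flip sign consistently.
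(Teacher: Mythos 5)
Your proof is correct and follows essentially the same route as the paper's own derivation (which appears as the discussion preceding the proposition rather than as a separate proof): eliminate $V_j$ via $V_j = U_j/2$, divide by $U_1$ and $U_2$, subtract the two resulting quartic relations to factor out $U_1^2 - U_2^2$, and then solve on each branch $U_1 = \pm U_2$, with the sign constraints on $a(t)$ coming from the requirement that $U_1^4 > 0$. The one place you go beyond the paper is worth keeping: the paper passes from $(U_1^2+U_2^2)(U_1^2-U_2^2) = 3U_1U_2(U_1^2-U_2^2)a(t)$ directly to $U_1^2 = U_2^2$ without addressing the alternative factor $U_1^2+U_2^2 = 3U_1U_2\,a(t)$, and your trick of adding the two reduced equations to get $\tfrac{3}{2} = -(U_1^4+U_2^4)-(U_1^2+U_2^2)^2 < 0$ on that branch cleanly disposes of it; likewise your explicit check that $U_1=0$ or $U_2=0$ forces the trivial root is needed for the completeness claim implicit in the proposition's wording. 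Both additions close genuine (if minor) gaps in the paper's argument rather than changing its method.
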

The corresponding blow-up power-determining matrix is
\begin{equation}
\label{blow-up_matrix_WWL_system}
-\frac{1}{2}\begin{pmatrix}
0 & 0 & 0 & 0 & 0\\
0 & 1 & 0 & 0 & 0\\
0  & 0 & 3 & 0 & 0\\
0 & 0 & 0 & 1 & 0\\
0 & 0 & 0 & 0 & 3\\
\end{pmatrix} + \begin{pmatrix}
0 & 0 & 0 & 0 & 0\\
0 & 0 & 1 & 0 & 0\\
- 3 U_1^2U_2^3 a'  & -5U_1^4 - 6 U_1U_2^3 a & 0 & - 9U_1^2U_2^2 a & 0\\
0 & 0 & 0 & 0 & 1\\
- 3 U_1^3U_2^2 a' &  - 9U_1^2U_2^2 a & 0 & -5U_2^4 - 6 U_1^3U_2 a & 0\\
\end{pmatrix}.
\end{equation}
Obviously, this matrix admits an eigenvalue $0$.
The remaining eigenvalues are determined by the lower-right $4\times 4$ submatrix whose associated characteristic equation is
\begin{equation*}
\det \begin{pmatrix}
\lambda +\frac{1}{2} & -1 & 0 & 0\\
5A_{\pm} \pm 6A_{\pm} a & \lambda + \frac{3}{2} & 9A_{\pm} a & 0\\
0 & 0 & \lambda + \frac{1}{2} & -1\\
9A_{\pm} a & 0 & 5A_{\pm} \pm 6A_{\pm} a & \lambda + \frac{3}{2}\\
\end{pmatrix} = 0,\quad A_{\pm} = 3/\{4(-1\mp 3a)\}.
\end{equation*}
When $U_1 = U_2$, this is reduced to
\begin{equation*}
\left\{ \left(\lambda + \frac{1}{2}\right) \left( \lambda + \frac{3}{2}\right) + 5A_{+} + 15A_{+} a\right\}\left\{ \left(\lambda + \frac{1}{2}\right) \left( \lambda + \frac{3}{2}\right) + 5A_{+} - 3A_{+} a\right\}= 0,
\end{equation*}
while reduced to
\begin{equation*}
\left\{ \left(\lambda + \frac{1}{2}\right) \left( \lambda + \frac{3}{2}\right) + 5A_{-} + 3A_{-} a  \right\}\left\{ \left(\lambda + \frac{1}{2}\right) \left( \lambda + \frac{3}{2}\right) + 5A_{-} - 15A_{-} a \right\}=0
\end{equation*}
when $U_1 = -U_2$.
Direct calculations yield the following property.
\begin{prop}
We obtain the blow-up power eigenvalues associated with the balance law (\ref{balance_WWL_system}) as follows.
\begin{enumerate}
\item When $U_1 = U_2$, in particular provided that $a(t) < -1/3$, then the corresponding eigenvalues are
\begin{equation*}
\lambda = 0,\quad 1,\quad -3,\quad -1 \pm \sqrt{\frac{ 3a(t)-8 }{2(-1-3a(t))}}.
\end{equation*}
The latter two values are complex conjugate with negative real part.
\item When $U_1 = -U_2$, in particular provided that $a(t) >1/3$, then the corresponding eigenvalues are
\begin{equation*}
\lambda = 0,\quad 1,\quad -3,\quad -1 \pm \sqrt{\frac{ 3a(t)+8 }{2(1 - 3a(t))}}.
\end{equation*}
The latter two values are complex conjugate with negative real part.
\end{enumerate}
\end{prop}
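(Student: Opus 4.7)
The strategy is to exploit the factorization of the characteristic polynomial already exhibited immediately before the statement. In each of the two cases, the quartic associated with the lower-right $4\times 4$ block splits into a product of two quadratics in $\lambda$, and the simple eigenvalue $0$ coming from the first row/column of the blow-up power-determining matrix (\ref{blow-up_matrix_WWL_system}) is read off at the outset. What remains is to evaluate each quadratic after substituting the explicit value of $A_\pm$ and to verify that in each case exactly one of the two quadratics reduces to a $t$-independent polynomial whose roots are $1$ and $-3$, while the other yields the advertised $t$-dependent conjugate pair.

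For the case $U_1 = U_2$ I would first expand $(\lambda+\tfrac{1}{2})(\lambda+\tfrac{3}{2}) + 5A_+ + 15A_+ a$ and observe that the tail $A_+(5+15a) = \tfrac{3\cdot 5(1+3a)}{4(-1-3a)}$ collapses to the constant $-\tfrac{15}{4}$ because the factor $1+3a$ cancels against $-1-3a$ up to sign. What remains is $\lambda^2 + 2\lambda - 3 = (\lambda-1)(\lambda+3)$, producing the eigenvalues $1$ and $-3$. The identical cancellation occurs in the case $U_1 = -U_2$ for the factor $(\lambda+\tfrac{1}{2})(\lambda+\tfrac{3}{2}) + 5A_- - 15A_- a$, via $A_-(5-15a) = -\tfrac{15}{4}$, so that this factor again contributes $\lambda = 1, -3$.

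For the remaining quadratic in each case I would apply the quadratic formula. In the case $U_1 = U_2$, combining $\tfrac{3}{4}$ with $A_+(5-3a)$ over the common denominator $4(-1-3a)$ gives the constant term $\tfrac{3(2-3a)}{2(-1-3a)}$, so that $\lambda = -1 \pm \sqrt{1 - \tfrac{3(2-3a)}{2(-1-3a)}} = -1 \pm \sqrt{\tfrac{3a-8}{2(-1-3a)}}$ after merging the two fractions. Under the standing hypothesis $a(t) < -\tfrac{1}{3}$, the denominator $2(-1-3a)$ is positive and the numerator $3a-8$ is negative, so the radicand is negative and the two roots form a complex conjugate pair whose common real part $-1$ is negative. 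The case $U_1 = -U_2$ is handled dually: the analogous simplification produces $\lambda = -1 \pm \sqrt{\tfrac{3a+8}{2(1-3a)}}$, and under $a(t) > \tfrac{1}{3}$ the same sign check yields a negative radicand and hence a complex conjugate pair with real part $-1 < 0$.

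The only delicate step is ensuring the rational manipulations are carried out without sign errors, since the sign of $-1 \mp 3a$ is tied to the case distinction and must be tracked consistently across both factors. Beyond this bookkeeping, the argument is elementary polynomial algebra and requires no structural result from the earlier correspondence theorems; the present proposition is a direct consequence of reducing the characteristic equation factor by factor.
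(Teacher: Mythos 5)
Your proposal is correct and follows essentially the same route as the paper, which simply states ``direct calculations'' after displaying the factored characteristic equations: the eigenvalue $0$ is read off from the block-triangular structure, one quadratic factor collapses to $(\lambda-1)(\lambda+3)$ via the cancellation $A_{\pm}\cdot 5(1\pm 3a) = -\tfrac{15}{4}$, and the other yields $-1\pm\sqrt{(3a\mp 8)/(2(\mp 1\mp 3a))}$ with the sign checks you describe. All the intermediate rational simplifications you report check out.
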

Presence of the eigenvalue $1$ is consistent with Proposition \ref{prop-ev1}.
The Implicit Function Theorem then indicates that there is a compact neighborhood $I_1$ of $t = t_{\max} < \infty$ with $a(t_{\max}) < -1/3$ such that the balance law (\ref{balance_WWL_system}) admits a smooth $t$-family $\mathcal{U}_{I_1}$ of solutions $(U_1(t), V_1(t), U_2(t), V_2(t))$ over $I_1$ satisfying $U_1(t) \equiv U_2(t)$.
The similar conclusion holds for some compact neighborhood $I_2$ of $t = t_{\max} < \infty$ with $a(t_{\max}) > 1/3$ such that the balance law (\ref{balance_WWL_system}) admits a smooth $t$-family $\mathcal{U}_{I_2}$of solutions $(U_1(t), V_1(t), U_2(t), V_2(t))$ over $I_2$ satisfying $U_1(t) \equiv -U_2(t)$.
From the correspondence of equilibria and eigenstructure obtained in Theorems \ref{thm-balance-1to1} and \ref{thm-blow-up-estr}, respectively, the families $\{\mathcal{U}_{I_i}\}_{i=1}^2$ induce those of invariant manifolds for the desingularized vector field $g^{\rm ext}$ associated with (\ref{WWL_system}) and $T_{{\rm para}; \alpha}$ admitting the normally {\em attracting} structure.
It then follows from Theorem \ref{thm-existence-blow-up} the existence of type-I blow-up solutions of (\ref{WWL_system}).
\begin{thm}
\label{thm-blowup-Hamilton-k2}
The system (\ref{WWL_system}) admits blow-up solutions at $t = t_{\max} < \infty$ with
\begin{equation*}
u_1(t)\sim \sqrt[4]{\frac{3}{4(-1 - 3a(t_{\max}))}}(t_{\max} - t)^{-1/2} \quad \text{ and }\quad u_2(t)\sim \sqrt[4]{\frac{3}{4(-1 - 3a(t_{\max}))}}(t_{\max} - t)^{-1/2}
\end{equation*} 
as $t\to t_{\max} - 0$, provided that the initial point $(u_1(t_0), u_2(t_0))$ with $0 < t_{\max} - t_0 \ll 1$ is sufficiently large.
The above blow-ups are attained when $a(t_{\max}) < -1/3$.
\par
Similarly, the system (\ref{WWL_system}) also admits blow-up solutions at $t = t_{\max} < \infty$ with
\begin{equation*}
u_1(t)\sim \sqrt[4]{\frac{3}{4(-1 + 3a(t_{\max}))}}(t_{\max} - t)^{-1/2} \quad \text{ and }\quad u_2(t)\sim -\sqrt[4]{\frac{3}{4(-1 + 3a(t_{\max}))}}(t_{\max} - t)^{-1/2}
\end{equation*} 
as $t\to t_{\max} - 0$, provided that the initial point $(u_1(t_0), u_2(t_0))$ with $0 < t_{\max} - t_0 \ll 1$ is sufficiently large.
The above blow-ups are attained when $a(t_{\max}) > 1/3$.
\end{thm}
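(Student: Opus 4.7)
The plan is to apply Theorem \ref{thm-existence-blow-up} directly to each of the two nontrivial root families (\ref{roots_balance_WWL_system_1}) and (\ref{roots_balance_WWL_system_2}) of the balance law. The two cases are symmetric, so I would argue only the first ($a(t_{\max}) < -1/3$, $U_1 = U_2$) in detail.

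First I would check that Assumption \ref{ass-f-1} holds for (\ref{WWL_system}). The order is $k+1 = 3$, so $k = 2 > 1$. Moreover, every monomial appearing in each $f_i$ with $i \in \{1,\ldots,4\}$, namely $v_j$, $u_j^5$, $u_1^2 u_2^3$, $u_1^3 u_2^2$, is exactly quasi-homogeneous of the required order $k+\alpha_i$ under the type $\alpha = (0,1,3,1,3)$; no subleading monomials occur in the state variables. Consequently the residual components $f_{i;{\rm res}}$ vanish identically for $i \geq 1$, and the decay condition (\ref{fi-order}) is trivially satisfied (cf. Remark \ref{rem-order-typical}).

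Next, fix a candidate blow-up time $t_{\max}$ with $a(t_{\max}) < -1/3$. Since $a \in C^1$, there is an open interval $I_1 \ni t_{\max}$ on which $-1-3a(t)$ stays bounded away from zero, so (\ref{roots_balance_WWL_system_1}) yields a smooth nonvanishing $t$-family ${\bf Y}_0(t)$ of roots of the balance law through the target point. The eigenvalue computation carried out just before the theorem shows that the associated submatrix $A(t_{\max})$ from (\ref{Aext-sub}) has spectrum
\[
\{1,\ -3,\ -1 + i\tau,\ -1 - i\tau\},\qquad \tau = \sqrt{\frac{8 - 3a(t_{\max})}{2(-1-3a(t_{\max}))}} \in \mathbb{R},
\]
which is disjoint from $i\mathbb{R}$. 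The spectral hypothesis ${\rm Spec}(A) \cap i\mathbb{R} = \emptyset$ of Theorem \ref{thm-existence-blow-up} is therefore met, and that theorem produces a blow-up solution of (\ref{WWL_system}) at $t_{\max}$. The claimed leading-order form is then read directly from the root coordinates: since $\alpha_{u_j} = 1$ and $k = 2$ give the exponent $-\alpha_{u_j}/k = -1/2$, and the $U_j$-coordinate of (\ref{roots_balance_WWL_system_1}) is $\sqrt[4]{3/(4(-1-3a(t_{\max})))}$, we obtain the stated asymptotic behavior of $u_1$ and $u_2$.

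The main obstacle will be verifying the non-resonance condition of Theorem \ref{thm-existence-blow-up} needed to promote the order estimate $y_i(t) = O((t_{\max}-t)^{-1/2})$ to the asymptotic equivalence $y_i(t) \sim Y_{0,i}(t_{\max})(t_{\max}-t)^{-1/2}$. Concretely, the identities
\[
a_1 \cdot 1 + a_2 \cdot (-3) + a_3(-1+i\tau) + a_4(-1-i\tau) = \lambda_j,\qquad (a_1,\ldots,a_4)\in \mathbb{Z}_{\geq 0}^4,\ \textstyle\sum a_k \geq 2,
\]
depend delicately on $\tau = \tau(a(t_{\max}))$. Resonances are avoided for generic values of $a(t_{\max})$, but could in principle occur at exceptional values; this can be handled either by restricting the claim to such generic $t_{\max}$, or by observing that only a finite list of $(a_1,\ldots,a_4)$ with small $\sum a_k$ can produce $\lambda_j$ on the right-hand side (so the set of exceptional $a(t_{\max})$ is discrete). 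The case $a(t_{\max}) > 1/3$ is handled identically, replacing the family (\ref{roots_balance_WWL_system_1}) by (\ref{roots_balance_WWL_system_2}) and $-1-3a$ by $-1+3a$ throughout.
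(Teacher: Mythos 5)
Your proposal follows essentially the same route as the paper: verify the roots of the balance law and that the submatrix $A$ of the blow-up power-determining matrix has no spectrum on $i\mathbb{R}$, then invoke Theorem \ref{thm-existence-blow-up} (through the correspondences of Theorems \ref{thm-balance-1to1} and \ref{thm-blow-up-estr}) to obtain the normally attracting family of equilibria on the horizon and hence the type-I blow-up with the stated coefficients and rate $(t_{\max}-t)^{-1/2}$. Your explicit verification of Assumption \ref{ass-f-1} and your caution about the non-resonance condition required for the asymptotic equivalence ``$\sim$'' are, if anything, more careful than the paper itself, which applies Theorem \ref{thm-existence-blow-up} without checking non-resonance.
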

As mentioned in the beginning of this subsection, the above result is {\em not} observed in preceding works \cite{WWL2012}, and extracts an influence of nonautonomous term $a(t)$ on blow-up behavior.
Sample computation results with $a(t) = \sin t$ are collected in Table \ref{table-WWL} in Appendix \ref{section-numerics} with the initial point
\begin{equation*}
{\bf x}_0 \equiv (x_{0, 1}, x_{0, 2}, x_{0, 3}, x_{0, 4}) = (0.7, 0.1, 0.1, 0.1)
\end{equation*}
and $t_0$ listed in the table, which support the present arguments.

\begin{rem}
When the constraints for $a(t_{\max})$ provided in Theorem \ref{thm-blowup-Hamilton-k2} are violated, nothing could be understood about blow-up behavior from our correspondence, expect a few observations.
First, if $a(t_{\max}) = -1/3$ when $u_1(t)$ and $u_2(t)$ have identical sign near $t = t_{\max}$, we will have $r_{{\bf Y}_0} = \infty$, correspondingly $C_\ast = 0$ through the identity $r_{{\bf Y}_0} = r_{{\bf x}_\ast}$. 
This will imply that the corresponding equilibrium ${\bf x}_\ast$ on the horizon will be {\em nonhyperbolic} from Propositions \ref{prop-ev-special-1} and \ref{prop-ev-special-2}.
According to the preceding study \cite{Mat2019}, nonhyperbolic equilibria or non-(normally) hyperbolic invariant sets on the horizon (for desingularized vector fields) can induce blow-up behavior different from type-I blow-up rates (referred to as {\em type-II} blow-ups in terminologies for PDEs, or slower rates), including possibilities of {\rm grow-up}; divergence with $t_{\max} = +\infty$.
Second, there {\em could be} cases that solutions blow up at $t = t_{\max}$ whose image under embeddings approach non-equilibrium invariant sets on the horizon (e.g., \cite{Mat2018, Mat2025_NHIM}), if our constraints on $t_{\max}$ stated in Theorem \ref{thm-blowup-Hamilton-k2} are violated.
The balance law (in the present form) provides no information of blow-ups associated with such situations, and direct investigations of desingularized vector fields would be necessary to unravel the genuine nature.
\par
As far as we have numerically investigated the desingularized vector fields associated with (\ref{WWL_system}), no behavior of solutions approaching to non-equilibrium invariant sets on the horizon has been observed.
This would imply that blow-ups induced by invariant sets on the horizon in the exceptional case of $t_{\max}$ would be unstable under perturbation of initial points.
The similar observations will be provided for (\ref{WWL_system_k1}) discussed below.
\end{rem}

\subsubsection{The case $k=1$}
We move to another case
\begin{align}
\notag
t'&= 1,\\
\notag
u_1' &= v_1,\\
\label{WWL_system_k1}
v_1' &= -u_1^3 - 2 u_1 u_2^2 a(t),\\
\notag
u_2' &= v_2,\\
\notag
v_2' &= -u_2^3 - 2 u_1^2 u_2 a(t),
\end{align}
corresponding to
\begin{equation*}
n=1,\quad m=2\quad \text{ and }\quad i=2.
\end{equation*}
\begin{lem}
Aligning the state variable $(t, u_1, v_1, u_2, v_2)$, the system (\ref{WWL_system_k1}) is asymptotically quasi-homogeneous of type $\alpha = (0,1,2,1,2)$ and order $k+1 = 2$, in particular $k=1$.
\end{lem}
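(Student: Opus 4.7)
The plan is to verify Definition~\ref{dfn-AQH} directly against the candidate type $\alpha=(0,1,2,1,2)$ and order $k+1=2$. Writing the right-hand side of (\ref{WWL_system_k1}) as $f^{\rm ext}(t,u_1,v_1,u_2,v_2) = (1,\, v_1,\, -u_1^3 - 2u_1u_2^2 a(t),\, v_2,\, -u_2^3 - 2u_1^2 u_2 a(t))^T$ and setting $\Lambda_\alpha^{\rm ext} = \diag(0,1,2,1,2)$, the key observation is that $\alpha_0 = 0$ forces $s^{\Lambda_\alpha^{\rm ext}}$ to fix the $t$-coordinate, so $a(t)$ is invariant under the rescaling; consequently the polynomial part of $f$ is expected to be \emph{exactly} quasi-homogeneous of the claimed type and order, and the only residual contribution will come from the constant $t$-component.

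First, following Remark~\ref{rem-QH-0th}, I would set $f_{0;\alpha,k} \equiv 0$ and treat $f_0 \equiv 1$ as the full residual in the $0$-th slot. The candidate quasi-homogeneous part is then
\begin{equation*}
f^{\rm ext}_{\alpha, 1}(t,{\bf x}) = (0,\, v_1,\, -u_1^3 - 2u_1 u_2^2 a(t),\, v_2,\, -u_2^3 - 2u_1^2 u_2 a(t))^T,
\end{equation*}
coinciding with $f^{\rm ext}$ outside the $t$-slot.

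Second, I would check the homogeneity identities $f_{i;\alpha,1}(s^{\Lambda_\alpha^{\rm ext}}{\bf x}) = s^{k+\alpha_i} f_{i;\alpha,1}({\bf x})$ by direct substitution for $i=1,\ldots,4$: the monomials $v_1,v_2$ carry weight $2 = k+1$, while each of $u_1^3$, $u_1u_2^2$, $u_2^3$, $u_1^2 u_2$ carries weight $3 = k+2$, so every component of $f^{\rm ext}_{\alpha,1}$ has exactly the weight prescribed by $k+\alpha_i$. This confirms quasi-homogeneity of $f^{\rm ext}_{\alpha,1}$ of type $\alpha$ and order $k+1=2$.

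Finally, the residual estimate (\ref{residual}) is trivial: for $i\ge 1$ the difference $f_i(s^{\Lambda_\alpha^{\rm ext}}{\bf x}) - s^{k+\alpha_i}f_{i;\alpha,1}({\bf x})$ vanishes identically, while for $i=0$ one has $1-0 = o(s^k) = o(s)$ as $s\to+\infty$. There is no genuine obstacle; the only point requiring attention is the separate bookkeeping for the $0$-th ($t$-)slot, which is absorbed by the convention $f_{0;\alpha,k}\equiv 0$ noted in Remark~\ref{rem-QH-0th}.
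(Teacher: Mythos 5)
Your verification is correct and is exactly the ``direct calculation'' the paper leaves implicit: with $\Lambda_\alpha^{\rm ext}=\diag(0,1,2,1,2)$ fixing $t$, the monomials $v_1,v_2$ have weight $2=k+\alpha_i$ and $u_1^3$, $u_1u_2^2$, $u_2^3$, $u_1^2u_2$ have weight $3=k+\alpha_i$, so the non-$t$ components are exactly quasi-homogeneous and the only residual is $f_0\equiv 1=o(s)$, handled by the convention of Remark~\ref{rem-QH-0th}. No gaps.
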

The balance law for this case is
\begin{equation}
\label{balance_WWL_system_k1}
U_1 = V_1,\quad 2V_1 = -U_1^3 - 2 U_1 U_2^2 a(t),\quad U_2 = V_2,\quad 2V_2 = -U_2^3 - 2 U_1^2 U_2 a(t),
\end{equation}
regarding $t$ as a parameter.
Assuming that $U_1, U_2 \not = 0$, the above equations are reduced to the following:
\begin{equation*}
2 = -U_1^2 - 2 U_2^2 a(t),\quad 2 = -U_2^2 - 2 U_1^2 a(t),
\end{equation*}
in particular
\begin{equation*}
U_1^2 - U_2^2 = 2(U_1^2 - U_2^2)a(t),
\end{equation*}
which yield $U_1^2 = U_2^2$ for {\em arbitrary} choice of $a(t)$.
Substituting this identity into the above equations, we have
\begin{equation*}
2 = -U_1^2 (1+2 a(t)) \quad \Rightarrow \quad U_1^2 = \frac{-2}{1 + 2a(t)},
\end{equation*}
as far as $a(t) < -1/2$ holds for any case.
Summarizing the above calculations, we have the following observation.
\begin{prop}
Roots of the balance law (\ref{balance_WWL_system_k1}) are the following:
\begin{align}
\label{roots_balance_WWL_system_k1_1}
(U_1, V_1, U_2, V_2) = &\left( \pm \sqrt{\frac{-2}{1 + 2a(t)}}, \pm \sqrt{\frac{-2}{1 + 2a(t)}}, \pm \sqrt{\frac{-2}{1 + 2a(t)}}, \pm \sqrt{\frac{-2}{1 + 2a(t)}} \right)\\
\label{roots_balance_WWL_system_k1_2}
	\text{ and }\quad  &\left( \pm \sqrt{\frac{-2}{1 + 2a(t)}}, \pm \sqrt{\frac{-2}{1 + 2a(t)}}, \mp \sqrt{\frac{-2}{1 + 2a(t)}}, \mp \sqrt{\frac{-2}{1 + 2a(t)}} \right)
\end{align}
provided that $a(t) < -1/2$.
\end{prop}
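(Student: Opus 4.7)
The plan is to carry out a direct algebraic analysis of the balance law (\ref{balance_WWL_system_k1}), mirroring the approach used for the $k>1$ system (\ref{balance_WWL_system}) in the previous subsection. First I would use the first and third scalar equations to eliminate $V_1 = U_1$ and $V_2 = U_2$, reducing (\ref{balance_WWL_system_k1}) to the two cubic relations
\begin{equation*}
2U_1 + U_1^3 + 2U_1 U_2^2 a(t) = 0, \qquad 2U_2 + U_2^3 + 2U_1^2 U_2 a(t) = 0.
\end{equation*}
A short case analysis then rules out the \lq\lq mixed" situation in which exactly one of $U_1, U_2$ vanishes: if e.g.\ $U_1 = 0$, the second equation becomes $U_2(2 + U_2^2) = 0$, which forces $U_2 = 0$ over $\mathbb{R}$, leaving only the trivial root, which is excluded from the listed families.

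Next, focusing on the nontrivial case $U_1 U_2 \neq 0$, I would divide through by $U_1$ and $U_2$ respectively to obtain
\begin{equation*}
U_1^2 + 2 U_2^2 a(t) = -2, \qquad U_2^2 + 2 U_1^2 a(t) = -2,
\end{equation*}
then subtract and factor the difference as $(U_1^2 - U_2^2)(1 - 2a(t)) = 0$. The degenerate branch $a(t) = 1/2$ is discarded because on it both displayed equations collapse to $U_1^2 + U_2^2 = -2$, which has no real solution; hence any real nontrivial root must satisfy $U_1^2 = U_2^2$, equivalently $U_2 = \pm U_1$.

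Substituting back, either sign choice reduces the two equations to the single condition $U_1^2 (1 + 2a(t)) = -2$, solvable in the reals precisely when $a(t) < -1/2$, giving $U_1^2 = -2/(1 + 2a(t))$. Enumerating the two independent sign choices (the overall sign of $U_1$, and whether $U_2 = U_1$ or $U_2 = -U_1$), and recalling $V_i = U_i$, recovers exactly the four-parameter family displayed in (\ref{roots_balance_WWL_system_k1_1}) and (\ref{roots_balance_WWL_system_k1_2}). The main point requiring care is the bookkeeping of the $\pm$ versus $\mp$ pattern between the two formulas and a clean statement of the real-solvability constraint on $a(t)$; apart from that, no genuine obstacle is anticipated, since the computation is structurally parallel to the $k>1$ derivation already carried out, with the simplification that here the \lq\lq symmetric" and \lq\lq antisymmetric" branches land on the same parameter constraint rather than the separate inequalities $a(t) < -1/3$ and $a(t) > 1/3$ found in the previous case.
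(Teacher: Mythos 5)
Your proof is correct and follows essentially the same computation as the paper: substitute $V_i = U_i$, divide the remaining two equations by the nonzero $U_i$, subtract to conclude $U_1^2 = U_2^2$, and solve $U_1^2(1+2a(t)) = -2$, which is real-solvable exactly when $a(t) < -1/2$. Your version is in fact marginally more careful than the paper's, which simply assumes $U_1, U_2 \neq 0$ and asserts $U_1^2 = U_2^2$ "for arbitrary $a(t)$" without explicitly disposing of the mixed case $U_1 = 0 \neq U_2$ or the degenerate branch $a(t) = 1/2$ (both of which you correctly rule out).
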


The corresponding blow-up power-determining matrix is
\begin{equation}
\label{blow-up_matrix_WWL_system_k1}
-\begin{pmatrix}
0 & 0 & 0 & 0 & 0\\
0 & 1 & 0 & 0 & 0\\
0  & 0 & 2 & 0 & 0\\
0 & 0 & 0 & 1 & 0\\
0 & 0 & 0 & 0 & 2\\
\end{pmatrix} + \begin{pmatrix}
0 & 0 & 0 & 0 & 0\\
0 & 0 & 1 & 0 & 0\\
- 2 U_1 U_2^2 a'  & -3U_1^2 - 2 U_2^2 a & 0 & - 4U_1 U_2 a & 0\\
0 & 0 & 0 & 0 & 1\\
- 2 U_1^2 U_2 a' &  - 4U_1 U_2 a & 0 & -3U_2^2 - 2 U_1^2 a & 0\\
\end{pmatrix}.
\end{equation}
Obviously, this matrix admits an eigenvalue $0$.
The remaining eigenvalues are determined by the lower-right $4\times 4$ submatrix whose associated characteristic equation is
\begin{equation*}
\det \begin{pmatrix}
\lambda +1 & -1 & 0 & 0\\
 3B + 2 B a & \lambda +2 & \pm 4B a & 0\\
0 & 0 & \lambda +1 & -1\\
 \pm 4B a & 0 & 3B +  2 B a & \lambda +2\\
\end{pmatrix} = 0,\quad B = \frac{-2}{1 + 2a},
\end{equation*}
where \lq\lq $\pm$" in the determinant corresponds to the sign of $U_1U_2$.
This is reduced to
\begin{equation*}
\left\{ \left(\lambda + 1\right) \left( \lambda + 2\right) + (3+6a)B \right\}\left\{ \left(\lambda + 1\right) \left( \lambda + 2\right) + (3-2a)B  \right\}= 0.
\end{equation*}
This reduction is valid for both cases $U_1 = \pm U_2$.
Substitution of $B$ yields
\begin{equation*}
\left\{ \left(\lambda + 1\right) \left( \lambda + 2\right) -6  \right\}\left\{ \left(\lambda + 1\right) \left( \lambda + 2\right) - \frac{2(3-2a)}{1 + 2a}  \right\}= 0.
\end{equation*}
Direct calculations yield the following property.
\begin{prop}
\label{prop-ev-WWL_k1}
The blow-up power eigenvalues associated with the balance law (\ref{balance_WWL_system_k1}) with $a(t) < -1/2$ are as follows:
\begin{equation*}
\lambda = 0,\quad 1,\quad -4,\quad \frac{1}{2}\left\{ -3 \pm \sqrt{9 - \frac{ 16(-1+2a(t)) }{ 1+2a(t) }} \right\}.
\end{equation*}
The latter two values are complex conjugate with negative real part.
\end{prop}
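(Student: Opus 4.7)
The plan is to exploit the work already laid down in the paragraphs immediately preceding the proposition: the characteristic polynomial of the $4\times 4$ block coming from the lower-right of the blow-up power-determining matrix \eqref{blow-up_matrix_WWL_system_k1} has already been factored as a product of two quadratics in $\lambda$, so the only remaining tasks are to solve each quadratic explicitly and verify the claimed qualitative properties of the roots. The eigenvalue $\lambda=0$ itself requires no computation beyond noting the trivial first row/column of the $5\times 5$ matrix in \eqref{blow-up_matrix_WWL_system_k1}; it is also the canonical tangential eigenvalue furnished by Proposition \ref{prop-tangential-t}. Expanding the first factor gives $\lambda^2+3\lambda-4=(\lambda-1)(\lambda+4)=0$, immediately producing the eigenvalues $1$ and $-4$, with $\lambda=1$ consistent with Proposition \ref{prop-ev1}.

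The second factor can be rewritten as $\lambda^2+3\lambda+C(a)=0$ with
\begin{equation*}
C(a)\;=\;2-\frac{2(3-2a)}{1+2a}\;=\;\frac{4(2a-1)}{1+2a},
\end{equation*}
so the quadratic formula gives $\lambda=\tfrac12\bigl(-3\pm\sqrt{9-4C(a)}\bigr)$, and direct substitution of $C(a)$ reproduces the claimed expression $\tfrac12\bigl\{-3\pm\sqrt{9-16(-1+2a)/(1+2a)}\bigr\}$. By Vieta's formulas, the two roots of this second factor sum to $-3$, so whatever their nature, their common real part is $-3/2<0$; this takes care of the sign claim at once.

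It remains to show that these two roots are genuinely complex conjugate, i.e.\ that $\Delta(a):=9-16(2a-1)/(1+2a)<0$ for every $a<-1/2$. Here one must be careful with sign conventions, because $1+2a<0$ in this regime: multiplying $\Delta(a)<0$ through by the negative quantity $1+2a$ flips the inequality to $16(2a-1)<9(1+2a)$, i.e.\ $14a<25$, which is automatic when $a<-1/2$. The main (mild) obstacle in the whole plan is precisely this bookkeeping of the sign of the denominator $1+2a$; once handled, the remainder of the argument is elementary algebra on two explicit quadratics.
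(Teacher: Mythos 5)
Your proposal is correct and follows essentially the same route as the paper: the paper presents the factored characteristic equation of the lower-right $4\times 4$ block and then simply states that ``direct calculations yield'' the proposition, and your argument supplies exactly those calculations (solving the two quadratics, checking the sign of the discriminant with the flip caused by $1+2a<0$, and reading off the real part $-3/2$ from Vieta). No gaps.
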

Presence of the eigenvalue $1$ is consistent with Proposition \ref{prop-ev1}.
The Implicit Function Theorem then indicates that there is a compact neighborhood $I_{\pm}$ of $t = t_{\max} < \infty$ with $a(t_{\max}) < -1/2$ such that the balance law (\ref{balance_WWL_system_k1}) admits a smooth $t$-family $\mathcal{U}_{\pm}$ of solutions $(U_1(t), V_1(t), \pm U_2(t), \pm V_2(t))$ over $I_{\pm}$ satisfying $U_1(t) \equiv \pm U_2(t)$.
From the correspondence of equilibria and eigenstructure obtained in Theorems \ref{thm-balance-1to1} and \ref{thm-blow-up-estr}, respectively, the families $\{\mathcal{U}_{I_\pm}\}$ induce those of invariant manifolds for the desingularized vector field $g^{\rm ext}$ associated with (\ref{WWL_system_k1}) and $T_{{\rm para}; \alpha}$ admitting the normally {\em attracting} structure.
It then follows from Theorem \ref{thm-existence-blow-up} the existence of type-I blow-up solutions of (\ref{WWL_system_k1}).
\begin{thm}
\label{thm-blowup-Hamilton-k1}
The system (\ref{WWL_system_k1}) admits blow-up solutions at $t = t_{\max} < \infty$ with
\begin{equation*}
u_1(t)\sim \pm \sqrt{\frac{-2}{1 + 2a(t_{\max})}} (t_{\max} - t)^{-1} \quad \text{ and }\quad u_2(t)\sim \mp \sqrt{ \frac{-2}{1 + 2a(t_{\max})}} (t_{\max} - t)^{-1}
\end{equation*} 
as $t\to t_{\max} - 0$, provided that the initial point $(u_1(t_0), u_2(t_0))$ with $0 < t_{\max} - t_0 \ll 1$ has sufficiently large modulus.
These blow-ups are attained when $a(t_{\max}) < -1/2$.
\end{thm}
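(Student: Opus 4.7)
The plan is to apply Theorem \ref{thm-existence-blow-up} directly, using the family of balance-law roots displayed in (\ref{roots_balance_WWL_system_k1_1})--(\ref{roots_balance_WWL_system_k1_2}) together with the eigenvalue list established in Proposition \ref{prop-ev-WWL_k1}. Since those two propositions already do the bulk of the algebraic work, the task reduces to verifying the hypotheses of Theorem \ref{thm-existence-blow-up} and translating its conclusion back to the original coordinates.

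First I would fix a candidate blow-up time $t_{\max}$ with $a(t_{\max}) < -1/2$ and choose one of the sign-branches in (\ref{roots_balance_WWL_system_k1_1})--(\ref{roots_balance_WWL_system_k1_2}), which supplies a nonzero root ${\bf Y}_{0,\ast} = (Y_{0,1}, Y_{0,2}, Y_{0,3}, Y_{0,4})$ of the balance law (\ref{balance_WWL_system_k1}). Assumption \ref{ass-f-1} is immediate here: the spatial components of (\ref{WWL_system_k1}) are strictly quasi-homogeneous of order $k+\alpha_j$ (polynomials in $u_1,v_1,u_2,v_2$ multiplied by $a(t)$ alone), so the residuals $\tilde f_{j;\mathrm{res}}$ vanish identically for $j=1,\ldots,4$ and (\ref{fi-order}) holds trivially.

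Next I would check the hyperbolicity requirement ${\rm Spec}(A) \cap i\mathbb{R} = \emptyset$. Proposition \ref{prop-ev-WWL_k1} records the five blow-up power eigenvalues of $A^{\rm ext}$ as $\{0, 1, -4, \lambda_+, \lambda_-\}$; writing $a(t_{\max}) = -\tfrac{1}{2} - \delta$ with $\delta > 0$, one has $16(2a(t_{\max})-1)/(2a(t_{\max})+1) = 16(1 + 1/\delta) > 16$, so the discriminant $9 - 16(2a(t_{\max})-1)/(2a(t_{\max})+1)$ is strictly negative and $\lambda_\pm$ are nonreal complex conjugates with real part $-3/2$. The eigenvalue $0$ is the ``nonautonomous tangential'' eigenvalue supplied by Proposition \ref{prop-tangential-t}, whose eigenvector has nonzero $t$-component and therefore lies outside ${\rm Spec}(A)$ by the structural lemma used throughout Section \ref{section-correspondence}. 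Consequently ${\rm Spec}(A) = \{1, -4, \lambda_+, \lambda_-\}$ has trivial intersection with $i\mathbb{R}$.

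With these verifications in hand, Theorem \ref{thm-existence-blow-up} supplies a one-parameter family of equilibria on the horizon forming a NHIM $M_\ast$ for $g^{\rm ext}$, a nonempty intersection $W^s_{\mathrm{loc}}(M_\ast; g^{\rm ext}) \cap \mathcal{D}$, and hence a blow-up solution of (\ref{WWL_system_k1}); the coefficient formula $u_j(t) \sim Y_{0,j}(t_{\max}-t)^{-\alpha_j/k}$ with $\alpha_1 = \alpha_3 = 1$ and $k=1$ produces the claimed asymptotics. The main technical obstacle is the non-resonance condition needed for the pointwise asymptotic equivalence: one must rule out $a_1 \cdot 1 + a_2 \cdot (-4) + a_3 \lambda_+ + a_4 \lambda_- = \lambda_j$ for every multi-index $(a_1,\ldots,a_4)$ with $\sum a_i \geq 2$. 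Matching imaginary parts forces $a_3 = a_4$, leaving a single real-part constraint whose exceptional set in $a(t_{\max})$ is countable; for generic $a(t_{\max}) < -1/2$ the non-resonance therefore holds, yielding the displayed equivalences. A secondary (mostly notational) issue is that the asymptotic conclusion of Theorem \ref{thm-existence-blow-up} formally requires $f \in C^4$, which implicitly strengthens the stated $a \in C^1$ hypothesis to $a \in C^4$; with that strengthening understood, the conclusion follows as stated.
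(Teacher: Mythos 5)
Your proposal is correct and follows essentially the same route as the paper: the paper likewise combines the balance-law roots (\ref{roots_balance_WWL_system_k1_1})--(\ref{roots_balance_WWL_system_k1_2}) and the eigenvalue list of Proposition \ref{prop-ev-WWL_k1} with the Implicit Function Theorem to obtain a smooth $t$-family of roots, transfers it via Theorems \ref{thm-balance-1to1} and \ref{thm-blow-up-estr} to a normally attracting family of equilibria on the horizon, and concludes with Theorem \ref{thm-existence-blow-up}. Your explicit check of Assumption \ref{ass-f-1} and your genericity discussion of the non-resonance condition (which the paper does not verify before asserting the pointwise asymptotics) are extra care, not a different method.
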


Sample computation results with $a(t) = \sin t$ as well as numerical arguments supporting the reliability of our theoretical results are collected in Appendix \ref{section-numerics}.

\section*{Concluding Remarks}

The present paper has addressed the extraction of simple characteristics determining the existence of blow-up solutions for nonautonomous systems of ODEs.
The key idea is, as achieved in autonomous case \cite{asym2}, the connection between objects providing multi-order asymptotic expansions of blow-ups (cf. \cite{asym1}) and \lq\lq dynamics at infinity" by means of local dynamical nature around equilibria on the horizon for the desingularized vector fields.
Nonautonomous terms, depending on the time variable $t$ as a parameter, require several modifications of the above characteristics from autonomous cases, while they do not violate the essential idea towards our achievement.
In particular, the leading coefficients of blow-ups (\cite{asym1}), namely $t$-families of the roots of balance laws, and associated eigenstructures naturally construct NHIMs on the horizon for the desingularized vector fields, indicating that {\em (sometimes simple) analytic information in blow-ups can naturally correspond to geometric nature in dynamics at infinity}, which yields a simple criterion of (non)existence of blow-ups with specific nature.
The essence presented in this paper will be extended to more complex blow-ups such as {\em periodic blow-ups} (\cite{Mat2018}); the blow-ups with infinitely many oscillations, and {\em normally hyperbolic blow-ups} (\cite{Mat2025_NHIM}); the blow-ups shadowing \lq\lq {\em NHIMs at infinity}", which will be one of the next directions.
Another direction will be description of asymptotic behavior of blow-ups induced by {\em non-normally hyperbolic} invariant manifolds on the horizon for $g$ (or $g^{\rm ext}$), as observed in e.g. \cite{K2014}.

\section*{Acknowledgements}
KM was partially supported by World Premier International Research Center Initiative (WPI), Ministry of Education, Culture, Sports, Science and Technology (MEXT), Japan, and JSPS Grant-in-Aid for Scientists (B) (No. JP23K20813).

\bibliographystyle{plain}
\bibliography{blow_up_asymptotic_nonaut_R1}

\appendix
\section{Algebraic arrangements in eigenstructure of matrices}
\label{section-algebra}

In this appendix, a technique for characterizing eigenstructures of matrices associated with nonautonomous systems as the extended autonomous systems is discussed.
In typical applications, nonautonomous systems have the forms satisfying Assumption \ref{ass-f-1}.
In particular, we take the structure of (linearized) matrices derived from nonautonomous systems into account.
\par
For $n\geq 1$, let $A \in M_{n+1}(\mathbb{R})$ be a matrix.
We consider two particular forms of $A$ and complete description of eigenstructures by means of those of {\em reduced} matrices, denoted by $\tilde A$.

\begin{ass} 
\label{ass-matrix-1}
The matrix $A$ has the following structure:
\begin{equation}
A = \begin{pmatrix}
a & {\bf 0}_n^T \\
\tilde {\bf u} & \tilde A
\end{pmatrix},
\end{equation}
where $a\in \mathbb{R}$, $\tilde {\bf u} = (u_1, \ldots, u_n)^T\in \mathbb{R}^n$ and $\tilde A\in M_n(\mathbb{R})$. 
Moreover, $a$ is a simple eigenvalue of $A$. 
In particular, $a\not \in {\rm Spec}(\tilde A)$.
\end{ass}
In the above situations, all eigenstructures of $A$ are constructed by those of $\tilde A$.

\begin{lem}
\label{lem-eigen-extend-1}
Suppose that Assumption \ref{ass-matrix-1} is satisfied.
Then $(a, (1,v_1,\ldots, v_n)^T) \in \mathbb{R}\times \mathbb{R}^{n+1}$ is an eigenpair of $A$, where $(v_1,\ldots, v_n)^T \equiv \tilde {\bf v}$ is the unique solution of the linear system
\begin{equation*}
(\tilde A - aI_n)\tilde {\bf v} = -\tilde {\bf u}.
\end{equation*}
In particular, if $\tilde {\bf u} = 0$, then $\tilde {\bf v} = 0$.
\par
Next, assume that $\lambda \not = a$.
Then the pair $(\lambda, \tilde {\bf v})\in \mathbb{C}\times \mathbb{C}^n$ with $\tilde {\bf v} = (v_1, \ldots, v_n)^T\in \mathbb{C}^n$ is an eigenpair of $\tilde A$
if and only if the pair $(\lambda, {\bf v}) \in \mathbb{C}\times \mathbb{C}^{n+1}$ with
${\bf v} = (0, \tilde {\bf v})^T$ is an eigenpair of $A$.
\end{lem}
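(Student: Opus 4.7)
The plan is to exploit the block lower-triangular structure imposed by Assumption \ref{ass-matrix-1}. For any vector ${\bf v} \in \mathbb{C}^{n+1}$ decomposed as ${\bf v} = (v_0, \tilde{\bf v})^T$ with $v_0 \in \mathbb{C}$ and $\tilde{\bf v} \in \mathbb{C}^n$, block multiplication gives
\[
A {\bf v} = \bigl(a v_0,\ v_0 \tilde{\bf u} + \tilde A \tilde{\bf v}\bigr)^T,
\]
so the eigenvalue equation $A{\bf v} = \lambda {\bf v}$ splits into a scalar relation $(a - \lambda)v_0 = 0$ and a vector relation $v_0 \tilde{\bf u} + (\tilde A - \lambda I_n)\tilde{\bf v} = {\bf 0}_n$. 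All subsequent reasoning consists of reading off consequences of these two coupled identities under the appropriate hypothesis on $\lambda$.

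For the first statement, I would substitute the ansatz ${\bf v} = (1, \tilde{\bf v})^T$ with $\lambda = a$ into the coupled relations. The scalar relation holds automatically, while the vector relation reduces to the inhomogeneous linear system $(\tilde A - aI_n)\tilde{\bf v} = -\tilde{\bf u}$. Since $a \not\in {\rm Spec}(\tilde A)$ by Assumption \ref{ass-matrix-1}, the matrix $\tilde A - aI_n$ is invertible, giving existence and uniqueness of $\tilde{\bf v}$; the degenerate case $\tilde{\bf u} = {\bf 0}_n \;\Rightarrow\; \tilde{\bf v} = {\bf 0}_n$ then follows at once.

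For the second statement, the forward direction is a single block computation: if $\tilde A \tilde{\bf v} = \lambda \tilde{\bf v}$, then the coupled identities hold with $v_0 = 0$, so $(0, \tilde{\bf v})^T$ is an eigenvector of $A$ with eigenvalue $\lambda$. Conversely, given an eigenpair $(\lambda, (v_0, \tilde{\bf v})^T)$ of $A$ with $\lambda \neq a$, the scalar relation $(a - \lambda)v_0 = 0$ forces $v_0 = 0$, and the vector relation then collapses to $\tilde A \tilde{\bf v} = \lambda \tilde{\bf v}$. Hence the purported form ${\bf v} = (0, \tilde{\bf v})^T$ is automatic, closing the equivalence.

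The whole argument is bookkeeping with the natural $(1+n)\times(1+n)$ block decomposition, and no genuine obstacle is foreseen. The only points requiring care are locating exactly where each hypothesis is used: the condition $a \not\in {\rm Spec}(\tilde A)$ enters precisely once, to invert $\tilde A - aI_n$ in the first statement, while the condition $\lambda \neq a$ enters precisely once, to force $v_0 = 0$ in the converse of the second statement. The assumption that $a$ is a \emph{simple} eigenvalue of $A$ is consistent with these statements (it records that the eigenspace produced in the first statement is one-dimensional) but is not itself needed in the verification.
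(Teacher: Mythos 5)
Your proposal is correct and follows essentially the same route as the paper's proof: block multiplication of $A(v_0,\tilde{\bf v})^T$, reduction of the $\lambda=a$ case to the uniquely solvable system $(\tilde A - aI_n)\tilde{\bf v}=-\tilde{\bf u}$, and forcing $v_0=0$ when $\lambda\neq a$. Your closing observation about exactly where each hypothesis enters is accurate but does not change the substance.
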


\begin{proof}
The first assertion requires to construct a linear system.
Now
\begin{align*}
\begin{pmatrix}
a & {\bf 0}_n^T \\
\tilde {\bf u} & \tilde A
\end{pmatrix}\begin{pmatrix}
1 \\ v_1 \\ \vdots \\ v_n
\end{pmatrix} &= \begin{pmatrix}
a \\ u_1 + \sum_{j=1}^n \tilde a_{1j}v_j \\ \vdots \\ u_n + \sum_{j=1}^n \tilde a_{nj}v_j
\end{pmatrix}.
\end{align*}
Our requirement here is the vector is the eigenvector associated with the \lq\lq eigenvalue" $a$, namely
\begin{equation*}
\begin{pmatrix}
a \\ u_1 + \sum_{j=1}^n \tilde a_{1j}v_j \\ \vdots \\ u_n + \sum_{j=1}^n \tilde a_{nj}v_j
\end{pmatrix} = a\begin{pmatrix}
1 \\ v_1 \\ \vdots \\ v_n
\end{pmatrix},
\end{equation*}
in particular our problem is reduced to the unique solvability of the following linear system:
\begin{equation}
\label{linear-eigen-extend-1}
(\tilde A - aI_n)\tilde {\bf v} = -\tilde {\bf u},\quad {\bf v} = (v_1, \ldots, v_n)^T
\end{equation}
Now the assumption $a\not \in {\rm Spec}(\tilde A)$ implies that the coefficient matrix $\tilde A - aI_n$ is nonsingular, and hence the system (\ref{linear-eigen-extend-1}) is uniquely solvable to obtain $\tilde {\bf v}$.
As a result, the first statement holds.
\par 
Next, assume that $(\lambda, {\bf v})$ with $\lambda \not = a$ and ${\bf v} = (v_0, \tilde {\bf v}) \in \mathbb{C}^{1+n}$ be an eigenpair of $A$. 
Then 
\begin{equation*}
\lambda \begin{pmatrix}
v_0 \\ \tilde {\bf v}
\end{pmatrix} \equiv \lambda {\bf v} = A{\bf v} \equiv \begin{pmatrix}
a & {\bf 0}_n^T \\
\tilde {\bf u} & \tilde A
\end{pmatrix}\begin{pmatrix}
v_0 \\ \tilde {\bf v}
\end{pmatrix} = \begin{pmatrix}
a v_0 \\ v_0 \tilde {\bf u} + \tilde A\tilde {\bf v}
\end{pmatrix},
\end{equation*}
which indicates that $v_0 = 0$ because $\lambda \not = a$.
Consequently, $\tilde A\tilde {\bf v} =  \lambda \tilde {\bf v}$.
The converse follows from direct calculations.
\end{proof}

In our applications in the main part, $a=0$ is always assumed.
Because ${\rm Spec}(\tilde A) \cap i\mathbb{R} = \emptyset$ is always assumed for the corresponding matrix $\tilde A$, assumptions involving eigenvalues are always valid.
Assumption \ref{ass-matrix-1} is therefore sufficient to construct all eigenpairs of $A$ assuming the knowledge for $\tilde A$.

\section{Numerical investigation of characteristics in Example \ref{section-ex-Hamiltonian}}
\label{section-numerics}

Here several numerical observations showing the reliability of results in Section \ref{section-correspondence} for (\ref{WWL_system_k1});
\begin{align*}
t'&= 1,\\
u_1' &= v_1,\\
v_1' &= -u_1^3 - 2 u_1 u_2^2 \sin(t),\\
u_2' &= v_2,\\
v_2' &= -u_2^3 - 2 u_1^2 u_2 \sin(t),
\end{align*}
are collected.
In particular, characteristics newly observed in the nonautonomous setting compared with autonomous ones, \cite{asym2}, are investigated.
\par
\bigskip
The vector field (\ref{WWL_system_k1}) is order $2$, namely $k=1$, which induces different characteristics of \lq\lq transversal" eigenpairs among $A$ and $Dg_\ast^{\rm ext}$ from the remaining cases, $k>1$.
As an example, we consider $U_1 = U_2$.
\par
The desingularized vector field $g^{\rm ext}(t, {\bf x})$ associated with (\ref{WWL_system_k1}) under the embedding $(t, u_1, v_1, u_2, v_2)\mapsto (t, {\bf x})$ with 
\begin{align*}
&u_1 = \frac{x_1}{1 - p_\alpha^4},\quad  v_1 = \frac{x_2}{(1 - p_\alpha^4)^2},\quad  u_2 = \frac{x_3}{1 - p_\alpha^4},\quad  v_2 = \frac{x_4}{(1 - p_\alpha^4)^2}, \\
&p_\alpha^4 \equiv p_\alpha({\bf x})^4 = x_1^4 + x_2^2 + x_3^4 + x_4^2
\end{align*}
consists of
\begin{align*}
g^{\rm ext}(t, {\bf x}) &= \begin{pmatrix}
\frac{1}{4}(1+3p_\alpha^4({\bf x})) (1-p_\alpha^4({\bf x})) \\ 
g(t,{\bf x})
\end{pmatrix},\\
g(t,{\bf x}) &= \frac{1}{4}(1+3p_\alpha({\bf x})^4) \tilde f(t,{\bf x}) - G(t,{\bf x}) (x_1, 2x_2, x_3, 2x_4)^T,\\
\tilde f_1(t,{\bf x}) &= x_2, \quad \tilde f_2(t,{\bf x}) = -x_1^3 - 2 x_1 x_3^2 a(t),\quad 
\tilde f_3(t,{\bf x}) = x_4, \quad \tilde f_4(t,{\bf x}) = -x_3^3 - 2 x_1^2 x_3 a(t),\\
G(t,{\bf x}) &= x_1^{3}\tilde f_1(t, {\bf x}) + \frac{1}{2}x_2 \tilde f_2(t, {\bf x})  + x_3^{3}\tilde f_3(t, {\bf x}) + \frac{1}{2}x_4 \tilde f_4(t, {\bf x}).
\end{align*}
We shall investigate typical trajectories approaching to the horizon as $\tau \to +\infty$ by numerical simulations through, e.g., the standard 4th order Runge-Kutta scheme with the initial point
\begin{equation*}
{\bf x}_0 \equiv (x_{0, 1}, x_{0, 2}, x_{0, 3}, x_{0, 4}) = (0.7, 0.1, 0.1, 0.1),
\end{equation*}
while the initial time $t_0$ is chosen as various values.
The trajectory approaches to an equilibrium, which depends on $t_0$.
On the other hand, the quantity $r_{{\bf Y}_0}$ associated with 
\begin{equation*}
(t_{\max}, {\bf Y}_0) = \left( t_{\max},  \sqrt{\frac{-2}{1 + 2a(t_{\max})}}, \sqrt{\frac{-2}{1 + 2a(t_{\max})}}, \sqrt{\frac{-2}{1 + 2a(t_{\max})}},  \sqrt{\frac{-2}{1 + 2a(t_{\max})}} \right)
\end{equation*}
is 
\begin{align*}
r_{{\bf Y}_0} &= p_\alpha({\bf Y}_0) \\
	&= \left\{  \left( \frac{-2}{1 + 2a(t_{\max})} \right)^2 + \frac{-2}{1 + 2a(t_{\max})} + \left( \frac{-2}{1 + 2a(t_{\max})} \right)^2 +  \frac{-2}{1 + 2a(t_{\max})}  \right\}^{1/4} \\
	&= \left[ 2\left( \frac{-2}{1 + 2a(t_{\max})} \right) \left\{ \frac{-2}{1 + 2a(t_{\max})} + 1\right\}  \right]^{1/4}.
\end{align*}
From the correspondence among roots of the balance law and equilibria on the horizon (Theorem \ref{thm-balance-1to1}), the corresponding equilibrium will be ${\bf x}_\ast = (x_{\ast, 1}, x_{\ast, 1}, x_{\ast, 1}, x_{\ast, 1})^T$ with
\begin{align*}
x_{\ast, 1} &= x_{\ast, 3} = \left[ \frac{1}{2}\left( \frac{-2}{1 + 2a(t_{\max})} \right) \left\{ \frac{-2}{1 + 2a(t_{\max})} + 1\right\}^{-1} \right]^{1/4},\\
x_{\ast, 2} &= x_{\ast, 4} = \left[ 2 \left\{ \frac{-2}{1 + 2a(t_{\max})} + 1\right\}  \right]^{-1/2}
\end{align*}
and 
\begin{align*}
C_\ast &= G(t_{\max}, {\bf x}_\ast)\\
	&= x_{\ast, 1}^{3}x_{\ast, 2} - \frac{1}{2}x_{\ast, 1}^3 x_{\ast, 2} \{1 + 2  a(t_{\max})\} + x_{\ast, 1}^{3}x_{\ast, 2} - \frac{1}{2} x_{\ast, 1}^3 x_{\ast, 2} \{1 + 2 a(t_{\max})\}\\
	&=x_{\ast, 1}^{3}x_{\ast, 2} \{ 1 - 2 a(t_{\max})\}\\
	&= \frac{1}{\sqrt{2}} \left( \frac{1}{| 1 + 2a(t_{\max}) |} \right)^{3/4} \left\{ \frac{1 - 2a(t_{\max})}{|1 + 2a(t_{\max})|} \right\}^{-5/4}\{ 1 - 2 a(t_{\max})\} \\
	&=  \sqrt{\frac{| 1 + 2a(t_{\max}) |}{2 \sqrt{1 - 2a(t_{\max})}}}. 
\end{align*}
A demonstrating result with $t_0 = 0.02$ is shown below.
The numerically computed trajectory is shown in Figure \ref{fig-WWL_k1}, which approaches to a point $(t_{\max}, {\bf x}_\ast)$ with
\begin{align*}
(t_{\max}, {\bf x}_\ast) &\approx (23.1796559, 0.769262666, 0.387058299, 0.769262666, 0.387058299).
\end{align*}
The corresponding value $a(t_{\max}) = \sin(t_{\max})$ is $-0.927813165 \cdots$, and 
(approximate) eigenvalues of the associated Jacobian matrices are
\begin{equation*}
0,\quad -2.01261970,\quad -0.503154925,\quad -0.754732388 \pm 1.67633491 i.
\end{equation*}
On the other hand, the constant $C_\ast$ is
\begin{equation*}
C_\ast \approx \sqrt{\frac{2\cdot 0.927813165 - 1}{2 \sqrt{1 + 2 \cdot 0.927813165 }}} = \sqrt{\frac{0.85562633}{2 \cdot 1.68985985514}} \approx 
\sqrt{0.25316487855} \approx 0.503154925,
\end{equation*}
where we see that $-C_\ast$ is indeed an eigenvalue.
Moreover, we have 
\begin{align*}
\frac{-2.01261970}{0.503154925}\approx 4,\quad \frac{-0.754732388 \pm 1.67633491 i}{0.503154925} \approx -\frac{3}{2} \pm 3.33164762324 i.
\end{align*}
Noting the fact that
\begin{align*}
\frac{1}{2}\sqrt{ \left| 9 - \frac{ 16(-1+2\sin(t_{\max})) }{ 1+2\sin(t_{\max}) } \right| } &\approx \frac{1}{2}\sqrt{ \left| 9 - \frac{ 16(-2.85562633) }{ -0.85562633} \right| }
	\approx \frac{1}{2}\sqrt{44.3995035894} \approx 3.33164762503
\end{align*}
from Proposition \ref{prop-ev-WWL_k1}, the above calculations support the validity of the eigenvalue correspondence stated in Theorem \ref{thm-blow-up-estr}.
\par
The remaining nontrivial object\footnote{
Correspondence of the tangential eigenvectors in concrete examples are already confirmed in \cite{asym2}.
} is the transversal eigenvector $\tilde {\bf v}^{\rm ext}_{\ast,\alpha}$ given in (\ref{evec-transversal-desing}), associated with the eigenvalue $-C_\ast$.
Numerically computed eigenvector is 
\begin{equation}
\label{evec_computed}
(1, 0.029769405, 0.25496239, 0.029769405, 0.25496239)^T.
\end{equation}

Now
\begin{align*}
{\bf v}_{\ast, \alpha} &= (0.769262666, 0.7741166, 0.769262666, 0.7741166)^T,\\
(D_tg)_\ast &\approx (0.10112685, -0.2378724, 0.10112685, 0.2378724)^T
\end{align*}
and the matrix $A_g + C_\ast I_4$ is invertible because $-C_\ast$ is {\em not} an eigenvalue\footnote{
It follows from the proof of Lemma \ref{lem-ass-f-1} or corresponding argument shown in Theorem 3.11 of \cite{asym2} that $A_g$ admits an eigenvalue $+C_\ast$, not $-C_\ast$ in general.
} of $A_g$.
Note that the vector (\ref{evec_computed}) is different from ${\bf v}_{\ast, \alpha}^{\rm ext}$.
From these quantities, we obtain the constant $d$ in (\ref{const_d}) as follows:
\begin{equation*}
d \equiv \frac{ (D_{\bf x}p_\alpha)_\ast^T (A_g + C_\ast I_n)^{INV} (D_t g)_\ast}{ (D_{\bf x}p_\alpha)_\ast^T (A_g + C_\ast I_n)^{INV} {\bf v}_{\ast, \alpha}} = \frac{0.14587053}{1.9874594} \approx 0.07339548
\end{equation*}
and
\begin{align*}
&\left\{ \frac{C_\ast}{2c} + d (A_g + C_\ast I_n )^{INV} \right\} {\bf v}_{\ast, \alpha} - (A_g + C_\ast I_n )^{INV} (D_t g)_\ast\\
&\quad = \begin{pmatrix}
0.029769405,\ 0.25496239,\ 0.029769405,\ 0.25496239 
\end{pmatrix}^T,
\end{align*}
implying the identity in (\ref{evec-transversal-desing}).
Other sample computation results are collected in Table \ref{table-WWL_k1} supporting the present arguments.

\begin{figure}[h!]\em
\centering
\includegraphics[width=8cm]{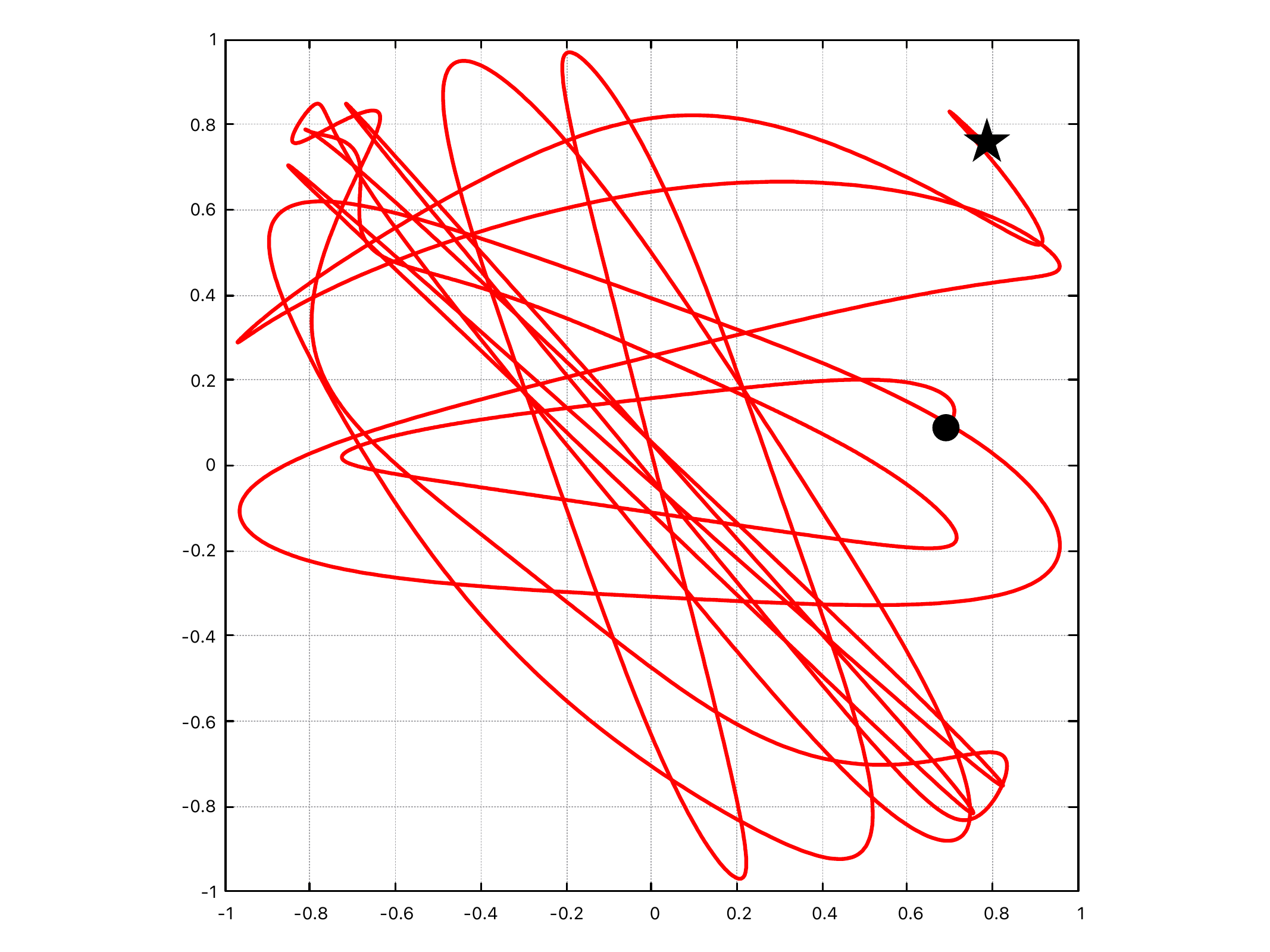}
\caption{Trajectory for (\ref{WWL_system_k1})}
\label{fig-WWL_k1}
\par
Projection of the trajectory onto the $(x_1, x_3)$-plane is drawn. 
Black dot shows the initial point ${\bf x}_0$, while black star shows the converged point ${\bf x}_\ast$.
\end{figure}

\begin{table}[ht]\em
\centering
{
\begin{tabular}{c|c|c|c}
\hline
$t_0$ & $t_{\max}$ & $x_1x_3$ & $\sin (t_{\max})$ \\
\hline
$0.0$ & $32.8825988$ & $-$ & $0.994583983$ \\  [1mm]
$0.02$ & $27.2058393$ & $-$ & $0.876476743$ \\  [1mm]
$0.04$ & $26.8604357$ & $-$ & $0.987716715$ \\  [1mm]
$0.06$ & $32.3622666$ & $-$ & $0.811281139$ \\  [1mm]
$0.08$ & $58.0551668$ & $-$ & $0.997933643$ \\  [1mm]
$0.10$ & $64.9226436$ & $-$ & $0.867822074$ \\  [1mm]
$0.12$ & $167.810591$ & $+$ & $-0.965193131$ \\  [1mm]
$0.14$ & $64.9566839$ & $-$ & $0.850408826$ \\  [1mm]
$0.16$ & $101.590941$ & $-$ & $0.872343878$ \\  [1mm]
$0.18$ & $27.2550679$ & $-$ & $0.851723643$ \\  [1mm]
$0.20$ & $27.3562087$ & $-$ & $0.794464435$ \\  [1mm]
\hline 
\end{tabular}%
}
\caption{$t_0$, $t_{\max}$, $x_1x_3$ and $\sin (t_{\max})$ for (\ref{WWL_system})}
\flushleft
\label{table-WWL}
\end{table}%

\begin{table}[ht]\em
\centering
{
\begin{tabular}{c|c|c|c}
\hline
$t_0$ & $t_{\max}$ & $x_1x_3$ & $\sin (t_{\max})$ \\
\hline
$0.0$ & $16.8997936$ & $-$ & $-0.929047680$ \\  [1mm]
$0.02$ & $23.1796559$ & $+$ & $-0.927813165$ \\  [1mm]
$0.04$ & $17.0537921$ & $-$ & $-0.97480137$ \\  [1mm]
$0.06$ & $17.0394695$ & $-$ & $-0.971506471$ \\  [1mm]
$0.08$ & $16.8828818$ & $-$ & $-0.922658414$ \\  [1mm]
$0.10$ & $16.6462128$ & $-$ & $-0.806524463$ \\  [1mm]
$0.12$ & $16.7658949$ & $-$ & $-0.871342465$ \\  [1mm]
$0.14$ & $16.7562709$ & $-$ & $-0.866579900$ \\  [1mm]
$0.16$ & $16.9183831$ & $-$ & $-0.935764124$ \\  [1mm]
$0.18$ & $17.0743196$ & $-$ & $-0.979174827$ \\  [1mm]
$0.20$ & $17.3945135$ & $-$ & $-0.993307996$ \\  [1mm]
\hline 
\end{tabular}%
}
\caption{$t_0$, $t_{\max}$, $x_1x_3$ and $\sin (t_{\max})$ for (\ref{WWL_system_k1})}
\flushleft
\label{table-WWL_k1}
\end{table}%


\end{document}